\documentclass[12pt]{article}
\usepackage{amsfonts,amssymb,latexsym,amsmath,amsthm}
\usepackage{eucal}
\usepackage{geometry}
\usepackage{hyperref}
\usepackage{enumitem}   
\usepackage{xcolor}
\usepackage{soul}
\usepackage{upgreek}
\usepackage[colorinlistoftodos]{todonotes}
\geometry{
 a4paper,
 total={210mm,297mm},
 left=3cm,
 right=2cm,
 top=3cm,
 bottom=2cm,
 bindingoffset=0mm}
\usepackage{authblk}

\newtheorem{thm}{Theorem}
\newtheorem*{thm*}{Theorem}
\newtheorem{theorem}{Theorem}

\newtheorem{prop}{Proposition}
\newtheorem{coro}{Corollary}
\newtheorem{lemma}{Lemma}
\newtheorem{defn}{Definition}
\newtheorem{remark}{Remark}


\def\<{\langle}
\def\>{\rangle}

\newcommand{\R}{\mathbb{R}}

\newcommand\be{\begin{equation}} 
\newcommand\ee{\end{equation}}
\newcommand{\comment}[1]

\tikzset { domaine/.style 2 args={domain=#1:#2} }

\newtheorem*{theo*}{Theorem}

\newtheorem*{prop*}{Proposition}

\newtheorem*{cor*}{Corollary}



\def\bea{\begin{eqnarray*} }

\def\eea{\end{eqnarray*} }

\begin{document}

\title{A Positive Energy Theorem for Fourth-Order Gravity}

\author{R. Avalos\thanks{Federal University of Cear\'a, Mathematics Department, Fortaleza, Cear\'a, Brazil}, P. Laurain\thanks{Institut Math\'ematique de Jussieu, Universit\'e de Paris, B\^atiment Sophie Germain, Case 7052, 75205 Paris C\'edex 13, France \& DMA, Ecole normale sup\'erieure, CNRS, PSL Research University, 75005 Paris.}, J. H. Lira$^{*}$} 
{
\maketitle


%






\setcounter{MaxMatrixCols}{10}

\begin{abstract}
In this paper we prove a positive energy theorem related to fourth-order gravitational theories, which is a higher-order analogue of the classical ADM positive energy theorem of general relativity. We will also show that, in parallel to the corresponding situation in general relativity, this result intersects several important problems in geometric analysis. For instance, it underlies positive mass theorems associated to the Paneitz operator, playing a similar role in the positive $Q$-curvature conformal prescription problem as the Schoen-Yau positive energy theorem does for the Yamabe problem. Several other links to $Q$-curvature analysis and rigidity phenomena are established.
\end{abstract}



\section{Introduction}

In this paper we will analyse the properties of a recently proposed energy associated to higher-order gravitational theories in the stationary limit, see \cite{ALM}. Specifically, in parallel work, we have analysed gravitational theories described on globally hyperbolic space-times $(V\doteq M\times\mathbb{R},\bar{g})$ by an action functional of the form
\begin{align}\label{action}
S(\bar{g})=\int_{V}\left(\alpha R^2_{\bar{g}} + \beta\< \mathrm{Ric}_{\bar{g}},\mathrm{Ric}_{\bar{g}} \> \right)dV_{\bar{g}},
\end{align}
where $\alpha$ and $\beta$ are free parameters in this variational setting. Let us highlight that the study of these kinds of higher-order gravitational action functional is well-motivated within contemporary theoretical physics as they appear in connection with effective field theories of gravity \cite{EffectiveFT2,EffectiveFT1,Renormalisation}, as well as in the context of inflationary cosmology \cite{Inflation} and certain approaches to quantum gravity, such as conformal gravity \cite{Maldacena,Conformalgrav}.

In order to make sense of the above functional, we can assume that the class of metrics considered above are such that $R^{2}_{\bar{g}}$ and $\< \mathrm{Ric}_{\bar{g}},\mathrm{Ric}_{\bar{g}} \>$ are integrable. Then, the functional $\bar{g}\mapsto S(\bar{g})$ is well-defined and we have an $L^2$-gradient for this functional, given by a divergence-free tensor field $A_{\bar{g}}\in \Gamma(T^0_2V)$, which is explicitly given by 
\begin{align}\label{A-tensor}
\begin{split}
A_{\bar{g}}&=\beta\Box_{\bar{g}}{\mathrm{Ric}_{\bar{g}}} + (\frac{1}{2} \beta  + 2\alpha)\Box_{\bar{g}}R\: {\bar{g}} - (2\alpha +  \beta)\bar{\nabla}^2R_{\bar{g}}  - 2\beta{\mathrm{Ric}_{\bar{g}}}_{\cdot}\mathrm{Riem}_{\bar{g}}    \\
&  + 2\alpha R_{\bar{g}}\mathrm{Ric}_{\bar{g}} -\frac{1}{2 }\alpha R^2_{\bar{g}} {\bar{g}} -\frac{1}{2 } \beta\langle\mathrm{Ric}_{\bar{g}},\mathrm{Ric}_{\bar{g}}\rangle_{\bar{g}} {\bar{g}},
\end{split}
\end{align}
where above we denoted ${{\mathrm{Ric}_{\bar{g}}}_{\cdot}\mathrm{Riem}_{\bar{g}}}_{ij}\doteq {\mathrm{Ric}_{\bar{g}}}^{kl}{\mathrm{Riem}_{\bar{g}}}{}_{kijl}$. In a parallel situation to what is well-known in the context of general relativity (GR), we have shown that there is a canonical notion of energy, which we denote by $\mathcal{E}_{\alpha,\beta}(\bar{g})$, associated to asymptotically Euclidean (AE) solutions of the space-time field equations $A_{\bar{g}}=0$ which arise as perturbations of solutions $\bar{g}_0$ which possess a time-like Killing field. Although the analysis of such an energy could be quite involved in general, we are able to identify some particular choices of the parameters $\alpha$ and $\beta$ for which its analysis is tractable. More important, we establish positivity and rigidity results results for the energy in those cases. These results are intimately connected with the existence of metrics with positive constant $Q$-curvature

Our aim here is to analyse the particular case of stationary solutions of the fourth-order field equations parameterized by $2\alpha+\beta=0$. Recall that globally hyperbolic stationary space-times are manifolds of the form $V = M\times\mathbb{R}$ endowed with a Lorentzian metric that can be written as
\begin{align}
\bar{g}=-N^2dt^2 + \tilde{g},
\end{align}
where $N:M\mapsto \mathbb{R}^{+}$ is the \textit{lapse} function and $\tilde{g}\in\Gamma(T^0_2M)$ restricts to a Riemannian metric $g$ on each $t=constant$ hypersurface. In this setting, the appropriate notion of energy associated to the action $S$ and the corresponding field equations $A_{\bar{g}}=0$ becomes
\begin{align}\label{4thenergy-static.2}
\begin{split}
\mathcal{E}_{\alpha}(g)\doteq - \alpha\lim_{r\rightarrow\infty}   \int_{S^{n-1}_r} \left( \partial_{j}\partial_{i}\partial_{i}g_{aa} - \partial_{j}\partial_{u}\partial_{i}g_{u i}\right)\nu^{j}d\omega_{r}   \end{split}.
\end{align}
In this expression, we are assuming that $(M,g)$ is an AE manifold. This means that for a given compact set $K$ the asymptotic region $M\backslash K$ is diffeomorphic to $\mathbb{R}^n\backslash\overline{B_1(0)}$ and it is foliated by topological $(n-1)$-dimensional spheres $S^{n-1}_r$, whose Euclidean volume element is denoted in the expression above by $d\omega_r$. There, $\nu$ stands for the Euclidean unit normal field to these spheres.

The nature of (\ref{4thenergy-static.2}) as a conserved quantity in the context of higher-order gravitational theories make $\mathcal{E}_{\alpha}(g)$ a very good fourth-order analogue to the Arnowitt, Deser and Misner (ADM) energy in the context of GR. Let us recall that the total energy of an isolated gravitational system in GR, whose initial data is modelled as an AE manifold, is given by (see \cite{ADM})
\begin{align}\label{ADMenergy}
E_{ADM}(g)&=c(n)\lim_{r\rightarrow\infty}   \int_{S^{n-1}_r} \left( \partial_{i}g_{j i} - \partial_{j}g_{ii}\right)\nu^{j}d\omega_{r},
\end{align} 
where $c(n)$ stands for a dimensional constant (see \cite{Bartnik} for the detailed analytical properties of (\ref{ADMenergy})). This energy $E_{ADM}$ has had a huge impact both within GR and in geometric analysis. Most notoriously, it was the work of R. Schoen in \cite{Schoen1} that elucidated the role that the ADM energy plays in the final resolution of the Yamabe problem (see also \cite{Lee-Parker} for a review on this topic). In particular, in order to solve the Yamabe problem in the positive Yamabe class, in dimensions 3,4 and 5 or in locally conformally flat manifolds, Schoen noticed that it was enough to prove that an appropriate constant appearing in the expansion of the Green function $G_{L_g}$ associated to the conformal Laplacian $L_g$ was non-negative and that the zero case implied rigidity with the round sphere. Furthermore, it was pointed out that this constant was precisely the ADM energy of the AE manifold obtained via an stereographic projection. Therefore, the proof of the positive energy theorem in GR actually underlies the proof of the Yamabe problem in these cases. This beautiful relation shows what a fundamental result the positive energy theorem actually is within geometric analysis, being a cornerstone in the resolution of the Yamabe problem. Since then, the ADM energy has influenced many other constructions within geometric analysis and mathematical GR, which are not necessarily concerned with the Yamabe problem. For instance, rigidity phenomena associated to positive scalar curvature \cite{Carlotto1,Carlotto2,Carlotto3}, isoperimetric problems on AE manifolds \cite{Eichmair1,Eichmair2,Yau1}, geometric foliations and center of mass constructions \cite{Eichmair2,Huang,Yau1,Nerz} and even gluing constructions \cite{Carlotto3} (for nice reviews of many of these topics, see \cite{Carlotto4,Lee}).

In view of the above paragraph, we consider that the analysis of the positive energy theorem associated to (\ref{4thenergy-static.2}) stands as a highly well-motivated problem both for the development of a program devoted to the mathematical analysis of fourth order theories of gravity, as well as a tool which can potentially play a fundamental role in several fourth order geometric problems. Thus, the main objective of this paper is the establishment of such a positive energy theorem, and, afterwards, we will show how it underlies some fundamental problems in geometric analysis. 

Since we will be concerned with positivity issues related to $\mathcal{E}_{\alpha}(g)$, we must actually fix the sign of $\alpha$ a priori. For reasons that will become apparent through this paper, we will fix $\alpha=-1$ and define $\mathcal{E}(g)\doteq \mathcal{E}_{-1}(g)$. Let us notice that before embarking on the proof of the positive energy theorem, we should first analyse under what conditions $\mathcal{E}(g)$ is actually well-defined and prove that it is actually an intrinsic geometric object within a suitable class of AE-manifolds. This will be done in Proposition \ref{Static.1}, Proposition \ref{Geometricmass} and Theorem \ref{Uniqueness.1}. Once this is done, we will first explore appropriate geometric conditions that could in principle provide a rigidity statement in the critical cases. Explicitly, such conditions will involve a $Q$-curvature condition of the form $Q_g\geq 0$ (see Proposition \ref{Maxprinciple.1} and Theorem \ref{PEthm3}). Let us recall that given a Riemannian manifold $(M^n,g)$, $n\geq 3$, its $Q$-curvature is defined by\footnote{See Appendix B for details on our conventions on $Q$-curvature.}
\begin{align*}
Q_g\doteq-\frac{1}{2(n-1)}\Delta_gR_g - \frac{2}{(n-2)^2}|\mathrm{Ric}_g|^2_g + \frac{n^3-4n^2+16n-16}{8(n-1)^2(n-2)^2}R_g^2.
\end{align*}

In this context, we will establish the following theorem:
\begin{theorem}[Positive Energy]\label{PETHM}
Let $(M^n,g)$ be an $n$-dimensional AE manifold, with $n\geq 3$, which satisfies the decaying conditions: (i) $g_{ij}-\delta_{ij}=O_4(r^{-\tau})$, with $\tau>\max\{0,\frac{n-4}{2}\}$,  in some coordinate system associated to an structure of infinity; (ii) $Q_g\in L^1(M,dV_g)$, and such that $Y([g])>0$ and $Q_g\geq 0$. Then, the fourth order energy $\mathcal{E}(g)$ is non-negative and $\mathcal{E}(g)=0$ if and only if $(M,g)$ is isometric to $(\mathbb{R}^n,\delta)$.
\end{theorem}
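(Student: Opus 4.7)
The plan is to reduce this fourth-order positive energy theorem to the classical ADM positive mass theorem via a conformal deformation of $g$. The two hypotheses play complementary roles: $Y([g])>0$ enables a conformal change to a scalar-flat representative whose ADM mass can be controlled, while $Q_g\ge 0$ supplies a non-negative bulk integral coming from the fourth-order part of the energy.

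Concretely, I would first use $Y([g])>0$ to solve $L_g\phi=0$ for a positive function with $\phi\to 1$ at infinity, obtaining the expansion $\phi=1+A\,r^{-(n-2)}+o(r^{-(n-2)})$. The metric $\tilde g\doteq\phi^{4/(n-2)}g$ is then scalar-flat and asymptotically Euclidean, with $E_{\mathrm{ADM}}(\tilde g)$ proportional to $A$ and non-negative by the classical Schoen--Yau/Witten theorem. The core technical step is then to establish an integration-by-parts identity of the form
\[
\mathcal{E}(g) \;=\; c_1(n)\int_M Q_g\, dV_g \;+\; c_2(n)\,E_{\mathrm{ADM}}(\tilde g),
\]
with positive dimensional constants $c_1,c_2$. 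I would derive it by rewriting the flux integrand defining $\mathcal{E}(g)$ as (minus) $\partial_j R_g\,\nu^j$ modulo nonlinear corrections, applying Stokes' theorem on large balls to convert it into a bulk integral of $\Delta_g R_g$, and identifying $-\tfrac{1}{2(n-1)}\Delta_g R_g$ as the linear part of $Q_g$; the remaining quadratic curvature terms in $Q_g$ are integrable thanks to $\tau>(n-4)/2$, and the conformal change $g\to\tilde g$ absorbs the residual scalar curvature contribution into $E_{\mathrm{ADM}}(\tilde g)$. Positivity of $\mathcal{E}(g)$ then follows immediately from $Q_g\geq 0$ and the classical PMT applied to $\tilde g$.

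For rigidity, $\mathcal{E}(g)=0$ forces both $Q_g\equiv 0$ and $E_{\mathrm{ADM}}(\tilde g)=0$. By the rigidity case of the classical PMT, $(M,\tilde g)\cong(\mathbb{R}^n,\delta)$, so $g$ is conformally flat. Together with $Q_g\equiv 0$, the condition $\phi\to 1$ at infinity, and the maximum principle from Proposition \ref{Maxprinciple.1}, this forces $\phi\equiv 1$, and hence $(M,g)\cong(\mathbb{R}^n,\delta)$.

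The main obstacle is the derivation of the key identity with the correct positive constants. The fourth-order conformal computations involve many terms, and under the borderline decay $\tau>(n-4)/2$ individual summands such as $\Delta_g R_g$ need not be separately integrable --- only the full $Q_g$ is, by hypothesis --- so the integration by parts must be organised to produce only integrable combinations and to expose the desired cancellations cleanly. A secondary difficulty is bridging the decay gap between the assumed $\tau>(n-4)/2$ and the $\tau>(n-2)/2$ typically required by the classical PMT, which may call for a Corvino--Schoen type density approximation of $\tilde g$ by metrics with harmonic asymptotics.
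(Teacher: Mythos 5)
Your overall architecture --- conformally deform to a scalar-flat representative $\tilde g=u^{4/(n-2)}g$ using $Y([g])>0$, then express $\mathcal{E}(g)$ as a sum of manifestly non-negative bulk integrals, then read rigidity off the vanishing of those integrals --- is the right one and matches the paper. But the identity on which you hang everything,
\[
\mathcal{E}(g)=c_1(n)\int_M Q_g\,dV_g+c_2(n)\,E_{\mathrm{ADM}}(\tilde g),
\]
is false, and no reorganisation of the integration by parts will produce it. A scaling count already rules it out: $E_{\mathrm{ADM}}(\tilde g)$ is linear in the deviation of the metric from $\delta$ at leading order, whereas the obstruction you are trying to absorb --- the $-cR_g^2$ term left over after identifying $-\tfrac{1}{2(n-1)}\Delta_gR_g$ inside $Q_g$ --- is quadratic. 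Concretely, test the purported identity (which, being an identity, must hold irrespective of sign hypotheses) on a Schwarzschild end $g_m=(1+\tfrac{m}{2}r^{2-n})^{4/(n-2)}\delta$: it is scalar-flat, so $u\equiv 1$ and $\tilde g=g_m$, $\mathcal{E}(g_m)=-\int\Delta_{g_m}R_{g_m}\,dV=0$ and $E_{\mathrm{ADM}}(\tilde g)\propto m$, while $\int Q_{g_m}\,dV=-\tfrac{2}{(n-2)^2}\int|\mathrm{Ric}_{g_m}|^2\sim -Cm^2$; you would need $0=-c_1Cm^2+c_2'm$ for all $m$, which is impossible. The correct identity (equation (\ref{YamabePEthm.1})) replaces $c_2E_{\mathrm{ADM}}(\tilde g)$ by the bulk term $\tfrac{4(n-1)}{(n-2)^2}\int_M|\mathrm{Ric}_{\tilde g}|^2_{\tilde g}\,\Phi\,dv_{\tilde g}$ with $\Phi=u^{-(n-4)/(n-2)}>0$, and the first term is the weighted integral $2(n-1)\int_M\Phi^{\frac{n+4}{n-4}}Q_g\,dv_{\tilde g}$ rather than $\int_M Q_g\,dV_g$. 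The mechanism producing it is not a linear/quadratic decomposition of $Q_g$ followed by Stokes, but the conformal covariance of the Paneitz operator: writing $g=\Phi^{4/(n-4)}\tilde g$ one has $P_{\tilde g}\Phi=\tfrac{n-4}{2}\Phi^{\frac{n+4}{n-4}}Q_g$; since $R_{\tilde g}=0$ forces $Q_{\tilde g}=-\tfrac{2}{(n-2)^2}|\mathrm{Ric}_{\tilde g}|^2_{\tilde g}$, integrating $P_{\tilde g}\Phi$ over large domains leaves, after the Ricci flux decays away, only the boundary term $\int_{S_r}\tilde g(\tilde\nabla\Delta_{\tilde g}\Phi,\tilde\nu)$, which converges to $\tfrac{n-4}{4(n-1)}\mathcal{E}(g)$ because $\Delta_{\tilde g}\Phi\approx-\tfrac{n-4}{4(n-1)}R_g$ by the equation $L_gu=0$.

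Two further consequences. First, the classical ADM positive mass theorem is never invoked, so your worry about bridging the gap between $\tau>(n-4)/2$ and the $\tau>(n-2)/2$ needed for the PMT (and the proposed Corvino--Schoen approximation) evaporates; rigidity comes instead from $Q_g\equiv 0$ and $\mathrm{Ric}_{\tilde g}\equiv 0$ forced by the bulk terms, Bishop--Gromov to conclude $\tilde g$ is flat and $M\cong\R^n$, and a maximum principle for the biharmonic equation satisfied by the remaining conformal factor. Second, the exponent $4/(n-4)$ degenerates at $n=4$, where one must instead take $\Phi=-\ln u$ and use the four-dimensional transformation law $Q_g=e^{-4\Phi}(P_{\tilde g}\Phi+Q_{\tilde g})$; your proposal does not address this case, nor the fact that in dimensions $3$ and $4$ the decay alone already forces $\mathcal{E}(g)=0$.
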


The proof of this statement will be the content of Theorem \ref{PEthm} (see also theorems \ref{PEthm2} and \ref{PEthm3}). Let us notice that some hypotheses in the above theorem can be relaxed while keeping important results. In particular, the positivity and rigidity statements are somehow decoupled. This implies that under hypotheses (i)-(ii) and $R_g>0$ \textit{at infinity}, it follows that $\mathcal{E}(g)\geq 0$ (see Theorem \ref{PEthm2}), while under hypotheses (i)-(ii), $Q_g\geq 0$ and $g$ Yamabe positive, if $\mathcal{E}(g)=0$, then the rigidity statement follows (see Theorem \ref{PEthm3}). Furthermore, let us notice that the borderline case of $n=4$ is special, since it follows that any 4-dimensional AE manifold satisfies the decaying conditions (i)-(ii) and, in fact, under these conditions $\mathcal{E}(g)=0$. These last comments also apply to the case $n=3$, which can be checked explicitly quite straightforwardly. This imposes restrictions on the positive curvature conditions that these manifolds can admit, providing us with the following corollaries.

\begin{cor*}
Any n-dimensional AE-Riemannian manifold $(M^n,g)$, with $n\in\{3,4\}$, such that $Q_g\geq 0$ and $Y([g])> 0$, is isometric to $(\R^n,\delta)$.
\end{cor*}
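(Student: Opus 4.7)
The plan is to reduce the corollary to the rigidity part of Theorem \ref{PETHM}. I would argue that in dimensions $n\in\{3,4\}$ the AE hypothesis already forces the decay conditions (i)--(ii) of that theorem, and that the boundary integral defining $\mathcal{E}(g)$ must vanish identically for purely dimensional reasons. Once $\mathcal{E}(g)=0$ is in hand, combining it with $Q_g\geq 0$ and $Y([g])>0$ and feeding everything into Theorem \ref{PETHM} immediately delivers $(M,g)\cong(\mathbb{R}^n,\delta)$.

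First I would verify that hypothesis (i), $g_{ij}-\delta_{ij}=O_4(r^{-\tau})$ with $\tau>\max\{0,(n-4)/2\}$, is automatic in the dimensions considered: for $n\in\{3,4\}$ the threshold simplifies to $\tau>0$, which is already built into any reasonable notion of AE structure (standard conventions in fact require $\tau>(n-2)/2$, a strictly stronger bound). For hypothesis (ii), the three building blocks of $Q_g$ decay at the rates $\Delta_g R_g = O(r^{-\tau-4})$ and $|\mathrm{Ric}_g|^2_g,\,R_g^2 = O(r^{-2\tau-4})$, so after pairing with the volume factor $r^{n-1}$ they are $L^1$ as soon as $\tau>n-4$ and $\tau>(n-4)/2$, both of which are automatic once $n\leq 4$ and $\tau>0$.

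Next I would establish $\mathcal{E}(g)=0$ by the same decay count applied to the boundary integrand in (\ref{4thenergy-static.2}). That integrand, $\partial_j\partial_i\partial_i g_{aa}-\partial_j\partial_u\partial_i g_{ui}$, carries three derivatives of $g_{ij}-\delta_{ij}$ and is therefore of order $O(r^{-\tau-3})$, while $S^{n-1}_r$ has area $\sim r^{n-1}$. The surface integrals are hence dominated by a constant multiple of $r^{n-4-\tau}$, giving $O(r^{-1-\tau})$ when $n=3$ and $O(r^{-\tau})$ when $n=4$. Since $\tau>0$ in both cases, the limit as $r\to\infty$ vanishes, so $\mathcal{E}(g)=0$.

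With all hypotheses of Theorem \ref{PETHM} satisfied and $\mathcal{E}(g)=0$, the rigidity half of that theorem (stated later as Theorem \ref{PEthm3}) concludes that $(M,g)\cong(\mathbb{R}^n,\delta)$. I do not anticipate a genuine obstacle in this reduction: the bulk of the argument is routine decay bookkeeping, and the real analytic content is imported from Theorem \ref{PETHM} itself, whose rigidity statement invokes the $Q$-curvature and Paneitz operator machinery developed in the body of the paper. What the corollary really records is the fact that in low dimensions the cubic-derivative boundary integral defining $\mathcal{E}(g)$ is too insensitive to distinguish $g$ from the flat metric, so the rigidity alternative in Theorem \ref{PETHM} is forced to be the one realized.
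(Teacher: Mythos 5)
Your proposal is correct and follows essentially the same route as the paper: in dimensions $3$ and $4$ the threshold $\tau_n$ is zero, so the decay hypotheses (i)--(ii) hold for any AE metric of order $\tau>0$, the boundary integrand $O(r^{-\tau-3})$ is overwhelmed by the sphere area $r^{n-1}$ so that $\mathcal{E}(g)=0$, and the rigidity half of Theorem \ref{PEthm3} then forces $(M,g)\cong(\mathbb{R}^n,\delta)$. The paper records exactly this observation (phrased as $\nabla R_g=O(r^{-\tau-3})$ decaying faster than the volume growth of $S^{n-1}_r$) immediately before stating the corollary.
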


Let us notice that Yamabe positive  AE manifolds in low dimensions (specially three) are very natural objects for instance in GR, since maximal vacuum initial data for isolated systems belong to this class of manifolds. Therefore, the above corollary, in some sense, can be used to separate the trivial solutions via an appeal to their $Q$-curvature.

The following result, which in particular gives a conformally invariant statement, is also a direct consequence of the analysis related to the above positive energy theorem for $\mathcal{E}(g)$ (see Proposition \ref{propc} for more details).

\begin{cor*}
Let $(M,g)$ be an asymptotically Euclidean four manifold with $\kappa_g =\int_M Q_g \, dv_g \geq 0$ and $Y([g])> 0$, then $(M,g)$ is conformal to the euclidean $\R^4$.
\end{cor*}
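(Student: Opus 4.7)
The plan is to reduce the statement to the previous corollary by finding a conformal representative $\tilde g = e^{2w}g\in [g]$ for which $Q_{\tilde g}\ge 0$ holds pointwise; the previous corollary then gives $(M,\tilde g)$ isometric to $(\R^4,\delta)$, which is exactly the assertion that $(M,g)$ is conformal to Euclidean $\R^4$. Both hypotheses transfer to $\tilde g$: $Y([\tilde g]) = Y([g]) > 0$ by conformal invariance of the Yamabe constant, while in dimension four the integral $\kappa_g = \int_M Q_g\,dv_g$ is also conformally invariant (the conformal transformation of $Q\,dv$ differs by an exact form whose boundary contribution at infinity vanishes under the AE decay), so the integrability and sign hypotheses on $Q$ persist after the conformal change.

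To produce $w$ I would use the dimension-four conformal transformation law
\[
2\,Q_{\tilde g}\,e^{4w} \;=\; P_g w + 2Q_g,
\]
and solve the linear Paneitz equation $P_g w = -2Q_g + 2f$ for some nonnegative $f$ of sufficient decay; then $Q_{\tilde g} = e^{-4w} f \ge 0$ by construction. The key input is an AE version of Gursky's theorem: under $Y([g])>0$ and $\kappa_g\ge 0$, the Paneitz operator $P_g$ should be non-negative on a suitable weighted Sobolev space and its kernel should be trivial on decaying functions (constants do not belong to the space, and any decaying kernel element must vanish by positivity together with the asymptotic analysis of $P_g$ on AE ends). Consequently $P_g$ restricts to an isomorphism between appropriate weighted Sobolev spaces, and the equation admits a unique solution $w$ decaying at infinity at some rate $\tau'>0$. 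Choosing, for instance, $f\in C^\infty_c(M)$ nonnegative and not identically zero, the resulting metric $\tilde g=e^{2w}g$ remains AE with the decay and $L^1$ integrability needed by Theorem~\ref{PETHM}, so the previous corollary applied in $n=4$ to $\tilde g$ completes the argument.

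The main obstacle is establishing this AE analogue of Gursky's non-negativity and isomorphism result for $P_g$: one has to simultaneously verify Fredholmness of $P_g$ between the chosen weighted spaces, triviality of its kernel among decaying functions, and the correct sign of the quadratic form $\int_M w\,P_g w\,dv_g$, with $\kappa_g\ge 0$ supplying precisely the boundary term needed for that sign. The combination of Yamabe positivity, conformal invariance of $\kappa_g$, and AE decay is exactly the analytic environment underlying Theorem~\ref{PETHM}, which should make each of these steps available; once the conformal change is in hand, the rigidity conclusion is an immediate application of the previous corollary.
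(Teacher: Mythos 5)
Your reduction to Corollary \ref{PEthm4d} is a natural idea, but the step on which everything rests --- solving $P_g w=-2Q_g+2f$ with $w$ decaying at infinity --- contains a genuine gap, and the isomorphism you postulate is false as stated. The principal part of $P_g$ is $\Delta_g^2$, and on an AE four-manifold $\Delta^2:W^{4,p}_{\delta}\to L^p_{\delta-4}$ is never an isomorphism between spaces of decaying functions: the fundamental solution of $\Delta^2$ on $\mathbb{R}^4$ is logarithmic, and since $P_g(1)=0$ in dimension four the constants lie in the cokernel for every $\delta<0$, so the best one can hope for is a Fredholm map whose range is cut out by the compatibility condition $\int_M(\mathrm{RHS})\,dv_g=0$. (This is exactly the phenomenon the paper exploits in its four-dimensional section: the Green function of $P_g$ has a $\ln$ singularity, and the blown-up metric in (\ref{Greenfunction4d.1}) is AE only when $\kappa_g=16\pi^2$.) Concretely, integrating your equation over $M$ forces $\int_M f\,dv_g=\kappa_g$, so an arbitrary nonnegative $f\in C^\infty_c$ with $f\not\equiv0$ is inadmissible precisely when $\kappa_g=0$ --- the case the conclusion says you must end up in. Even after imposing $\int f=\kappa_g$, you would still have to prove Fredholmness, kernel triviality, and surjectivity onto the codimension-one subspace for the fourth-order operator $P_g$ on weighted spaces; none of this is supplied by the paper (which develops the weighted theory only for $\Delta_g$ and $L_g$), and non-negativity of the quadratic form $\int wP_gw$ does not by itself yield it.

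The paper's route avoids inverting $P_g$ altogether. Since $Y([g])>0$, one conformally deforms to a scalar-flat AE metric $\tilde g=u^{2}g$ using only the conformal Laplacian (Corollary \ref{CLg} and Maxwell's theorem, exactly as in the proof of Theorem \ref{PEthm}); then $Q_{\tilde g}=-\tfrac12|\mathrm{Ric}_{\tilde g}|^2_{\tilde g}\le 0$, so $\kappa_{\tilde g}\le 0$, while the conformal invariance of $\kappa$ in dimension four (which you correctly note) gives $\kappa_{\tilde g}=\kappa_g\ge0$. Hence $\kappa_g=0$ and $\mathrm{Ric}_{\tilde g}\equiv0$, and Bishop--Gromov (as in Proposition \ref{Maxprinciple.1}) forces $\tilde g$ to be flat with $M\cong\mathbb{R}^4$, i.e.\ $g$ is conformal to the Euclidean metric; the Gauss--Bonnet--Chern identity preceding Proposition \ref{propc} supplies the accompanying topological statement. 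The moral is that the useful conformal representative here is the scalar-flat one, for which the relevant sign of $Q$ comes for free from a second-order equation, rather than one manufactured by an unproven solvability theorem for the Paneitz operator.
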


After presenting the above results concerning the positive energy theorem associated to $\mathcal{E}(g)$, we will make contact with important problems in geometric analysis. In particular, our aim is to show that that there is a very clear parallel in the role played by the ADM positive energy theorems of Schoen-Yau \cite{SY1,SY2,SY3} in the resolution of the Yamabe problem, to the role Theorem \ref{PETHM} plays in the resolution of the positive $Q$-curvature Yamabe-type problems which are known up to date. This problem concerns finding a conformal deformation of a closed Riemannian manifold $(M,g)$ so that the resulting metric has constant $Q$-curvature. There has been great progress in this program in recent times and the analysis depends on whether $n\geq 5$ or $n=3,4$. In particular, for the most updated resolutions of this problem to our knowledge in dimension four see \cite{Malchiodi2,Gursky,Chang-Yang} and references therein, while for $n\geq 5$, with different degrees of generality, let us draw the reader's attention to \cite{hangyang1,Malchiodi1,Qing}. Although we will not be concerned with the three dimensional case, let us point out that the problem has been addressed in this case in \cite{hangyang2,hangyang3}. 

Let us now focus on the results of \cite{hangyang1}. There, the authors prove that given a closed manifold $(M,g)$ of dimension $n\geq 5$ which is Yamabe non-negative and satisfies $Q_g\geq 0$ not identically zero,\footnote{In this cases we will say that $Q_g$ is \textbf{semi-positive}.} there is a conformal deformation to constant positive $Q$-curvature. In particular, this is done in Theorem 4 therein where in the cases of dimensions $n=5,6,7$ or $n\geq 5$ and locally conformally flat, there is a parallel to Schoen's resolution of the Yamabe problem. That is, the problem can be reduced to showing that a certain coefficient in the Green function expansion of the Paneitz operator near a pole is non-negative. In analogy to Schoen's ideas, this constant has been called the mass of the Paneitz operator  (whenever defined) and its positivity has been analysed first by Humbert-Raulot in the locally conformally flat case \cite{Raulot}, then by Gursky-Malchiodi who incorporated the cases $5\leq n\leq 7$ without this last restriction, and finally by Hang-Yang \cite{hangyang2}, who weakened the hypotheses on the scalar curvature imposed in \cite{Malchiodi1} and achieved the final form of this results which was applied in Theorem 4 of \cite{hangyang1}. Along these lines, let us also draw the reader's attention to an unpublished work by B. Michel \cite{BM}, who also analysed these kinds of positive mass theorems associated to the Green function of the Paneitz operator previously and obtained related results through similar methods to those of \cite{Raulot,Malchiodi1,hangyang2}.

In Section 2.2 we will show that exactly in dimensions $5\leq n\leq 7$ or if $(M,g)$ has a point $p$ around which it is conformally flat, then the mass of the Green function $G_{P_g}$ associated to the Paneitz operator $P_g$ is positively proportional to the energy $\mathcal{E}(\hat{g})$ of the asymptotically euclidean manifold $(\hat{M}=M\backslash\{p\},\hat{g}=G^{\frac{4}{n-4}}_{P_g}g)$ obtained via a stereographic projection. Then, we show that the following $Q$-curvature positive mass theorem follows from Theorem \ref{PETHM}:

\begin{theorem}[$Q$-curvature Positive Mass \cite{Malchiodi1,hangyang1,Raulot}]
Let $(M,g)$ be a closed $n$-dimensional Riemannian manifold, with $ 5\leq n\leq 7$  or $n\geq 8$ and  locally conformally flat around some point $p\in M$. If $Y([g])\geq 0$ and $(M,g)$ admits a conformal metric with semi-positive $Q$-curvature, then the mass of $G_{P_g}$ at $p$ is non-negative and vanishes if and only if $(M,g)$ is conformal to the standard sphere.
\end{theorem}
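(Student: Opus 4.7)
The plan is to obtain this $Q$-curvature positive mass theorem as a direct corollary of Theorem \ref{PETHM}, by blowing up the Paneitz Green function at $p$ to produce an AE manifold whose energy $\mathcal{E}(\hat g)$ is, up to a positive dimensional constant, the mass of $G_{P_g}$ at $p$ (the required proportionality being the content promised for Section 2.2). The program has three movements: fix a good conformal representative, perform the stereographic projection and verify that the blown-up metric meets the hypotheses of Theorem \ref{PETHM}, and then translate the positive energy conclusion back into a statement about the Paneitz mass.

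First, I would replace $g$ by a conformal representative in which $P_g$ is coercive and the Green function is positive. Since $Y([g])\geq 0$ and the conformal class carries a metric with $Q\geq 0$, $Q\not\equiv 0$, a standard improvement argument of Gursky--Malchiodi and Hang--Yang furnishes a conformal metric, still denoted $g$, with $R_g>0$ and $Q_g\geq 0$ not identically zero. Under these conditions $P_g$ is positive with a positive Green function $G_{P_g}$ at $p$. Then set $\hat M=M\setminus\{p\}$ and $\hat g=G_{P_g}^{4/(n-4)}g$. The conformal transformation law for the $Q$-curvature, combined with $P_g G_{P_g}=0$ away from $p$, yields $Q_{\hat g}\equiv 0$ on $\hat M$, so hypothesis (ii) of Theorem \ref{PETHM} is trivially satisfied. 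The Yamabe-positivity is preserved: $Y([\hat g])=Y([g])>0$.

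The next step is to show that $(\hat M,\hat g)$ is AE with the decay required by hypothesis (i). This is precisely where the dimension restriction $5\leq n\leq 7$ or local conformal flatness near $p$ enters: under these assumptions one has a clean expansion
\[
G_{P_g}(x)=\frac{1}{r^{n-4}}\bigl(c_n+A\,r^{n-4}+o(r^{n-4})\bigr)
\]
near $p$, where $A$ is the Paneitz mass. Pulling back through the inversion $y=x/|x|^2$ yields $\hat g_{ij}-\delta_{ij}=O_4(r^{-\tau})$ for some $\tau>\max\{0,(n-4)/2\}$, matching hypothesis (i). Applying Theorem \ref{PETHM} then gives $\mathcal{E}(\hat g)\geq 0$ with equality iff $(\hat M,\hat g)$ is isometric to $(\mathbb{R}^n,\delta)$. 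By the proportionality $m_{P_g}(p)=c\,\mathcal{E}(\hat g)$ with $c>0$, the mass is non-negative. In the equality case, the isometry with flat $\mathbb{R}^n$ forces the one-point compactification to be conformally the round sphere, so $(M,g)$ is conformal to $(S^n,g_{\mathrm{round}})$.

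The main obstacle I expect is the controlled expansion of $G_{P_g}$ to the order needed to secure both the $C^4$-decay of $\hat g-\delta$ and the identification of the mass coefficient with $\mathcal{E}(\hat g)$. In dimensions $5\leq n\leq 7$ the leading singularity already dominates the curvature-induced error terms, while under local conformal flatness one can conjugate $P_g$ to the flat bi-Laplacian in suitable coordinates and read off the remainder explicitly; outside this range, obstructions arising from nonzero Weyl curvature spoil the expansion, which is exactly why the theorem restricts to these two regimes. Establishing the precise proportionality constant is a direct but delicate computation that reduces to evaluating the boundary integrand in \eqref{4thenergy-static.2} on the asymptotic expansion, and this is the calculation to be carried out in Section 2.2.
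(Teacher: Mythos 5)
Your overall route is the same as the paper's: blow up at $p$ via the Paneitz Green function, check that $(\hat M,\hat g=G_{P_g}^{4/(n-4)}g)$ satisfies the hypotheses of the AE positive energy theorem with $Q_{\hat g}\equiv 0$, and identify the Paneitz mass with a positive multiple of $\mathcal{E}(\hat g)$. However, two steps you treat as routine are precisely where the work lies, and one of your supporting claims is wrong as stated.

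First, you assert ``$Y([\hat g])=Y([g])>0$'' without argument. The hypothesis actually needed to run Theorem \ref{PEthm3} on the noncompact manifold $\hat M$ is the existence of a scalar-flat conformal representative (via Maxwell's theorem), and positivity of the compact Yamabe invariant does not transfer to the punctured AE manifold by fiat. The paper settles this in Proposition \ref{blowup-scalcurv2} by exhibiting the scalar-flat metric explicitly, namely $\bar g=\bigl(G_{L}^{2}/G_{P}^{2(n-2)/(n-4)}\bigr)^{2/(n-2)}\hat g$, using the Green function $G_L$ of the conformal Laplacian (which exists and is positive because $Y([g])>0$). Some such argument must be supplied.

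Second, your claim that for $5\leq n\leq 7$ ``the leading singularity already dominates the curvature-induced error terms'' is false for $n=6,7$. After inversion, the mass term in $\hat g_{ij}-\delta_{ij}$ decays like $\rho^{-(n-4)}$ while the quadratic and cubic curvature terms coming from the normal-coordinate expansion of $g$ decay like $\rho^{-2}$ and $\rho^{-3}$; for $n=6,7$ the curvature terms are of equal or \emph{larger} order than the mass term. The paper's Proposition \ref{blowup.0} handles this by working in conformal normal coordinates of order $N\geq 4$, so that $\tilde R_{ij}(p)=0$ and the symmetrized derivative $\tilde R_{ij,k}+\tilde R_{ki,j}+\tilde R_{jk,i}$ vanishes at $p$; these identities force the contributions $\partial_{aa}H^2_{ii}$, $\partial_{ij}H^2_{ij}$, $\partial_{aa}H^3_{ii}$, $\partial_{ij}H^3_{ij}$ to vanish, so only the mass term survives in the flux integrand. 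Relatedly, your Green function expansion with an $o(r^{n-4})$ remainder inside the bracket (i.e.\ $o(1)$ accuracy of the constant term) is too weak both to obtain the $O_4$ decay required by hypothesis (i) and to isolate the constant $\alpha$; one needs the expansion $G_P=\gamma_n r^{-(n-4)}+\alpha+O_4(r)$ of Gursky--Malchiodi, valid precisely in conformal normal coordinates under the stated dimensional or conformal-flatness restrictions. Once these two points are repaired, the remainder of your argument (positivity from $\mathcal{E}(\hat g)\geq 0$, rigidity from the one-point compactification of flat $\mathbb{R}^n$) matches the paper.
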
 

The above theorem is exactly the positive mass theorem used by Hang-Yang in \cite{hangyang1} to solve the $Q$-curvature prescription problem in dimensions $5\leq n\leq 7$ or $n\geq 5$ and $M$ locally conformally flat around a point. This highlights the potential parallel of energy $\mathcal{E}(g)$ in the analysis of fourth-order problems to the role of the ADM energy in classical second order geometric problems. 

Finally, along the lines of the remarks of the previous paragraphs, we will analyse the critical four-dimensional case and provide an independent proof a Theorem B in \cite{Gursky} appealing to the techniques derived in this paper. Concretely, the following theorem follows from our analysis:

\begin{theorem}[Gursky]
Let $(M^4,g)$ a $4$-dimensional manifold with $Y([g])\geq 0$, then $\kappa_g\leq 16\pi^2$ with equality holding iff $(M^4,g)$ is conformal to the standard sphere.
\end{theorem}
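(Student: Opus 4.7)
The plan is to derive the inequality by a contradiction argument using a conformal blow-up at a point, reducing the closed $4$-manifold statement to the previously established AE corollary (Proposition \ref{propc}). First, the case $Y([g])=0$ is immediate: the Yamabe minimiser in $[g]$ is scalar-flat, so $Q_g$ collapses to $-\tfrac{1}{2}|\mathrm{Ric}|^{2}\le 0$ and therefore $\kappa_g\le 0<16\pi^{2}$ with no equality possible. I may therefore assume $Y([g])>0$ strictly, and also $\kappa_g>0$ (otherwise $\kappa_g\le 0<16\pi^{2}$ trivially).

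Pick a point $p\in M$. Under $Y([g])>0$ together with $\kappa_g>0$, standard results for the Paneitz operator on closed $4$-manifolds give that $P_g$ is non-negative with kernel reduced to the constants, so a Green function $G_p$ exists solving $P_gG_p=\delta_p-V^{-1}$ and admitting the local expansion $G_p=-\frac{1}{8\pi^{2}}\log r+A_p+\mathrm{o}(1)$ as $r=d_g(\cdot,p)\to 0$. I would set $u\doteq 16\pi^{2}G_p+c_0$, adjusting the additive constant $c_0$ so that the rescaled metric $\tilde g\doteq e^{2u}g$ is asymptotic to $\delta$ after the standard inversion $y=x/|x|^{2}$ at $p$. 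A routine computation then gives the expansion $\tilde g_{ij}(y)-\delta_{ij}=\mathrm{O}(|y|^{-2}\log|y|)$, so $(M\setminus\{p\},\tilde g)$ is an AE manifold with the regularity and decay rate $\tau>0$ required by the dimension $4$ hypotheses of Theorem \ref{PETHM}.

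The decisive computation is the value of the total $Q$-curvature of $\tilde g$. Applying the four-dimensional conformal law $Q_{\tilde g}\,e^{4u}=Q_g+P_gu$ and integrating over $M$ against $dV_g$, the smooth part $P_gu\equiv -16\pi^{2}V^{-1}$ contributes $-16\pi^{2}$, so
\[
\kappa_{\tilde g}=\int_{M\setminus\{p\}}Q_{\tilde g}\,dV_{\tilde g}=\int_{M}\bigl(Q_g+P_gu\bigr)\,dV_g=\kappa_g-16\pi^{2}.
\]
This can be checked on the round sphere, where it gives $\kappa_{\tilde g}=0$, in agreement with the flat model $(\mathbb{R}^{4},\delta)$ recovered by stereographic projection.

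Assuming now for contradiction $\kappa_g>16\pi^{2}$, one has $\kappa_{\tilde g}>0$ and, since Yamabe positivity is conformally invariant, also $Y([\tilde g])>0$; Proposition \ref{propc} then forces $(M\setminus\{p\},\tilde g)$ to be conformal to $(\mathbb{R}^{4},\delta)$, so that $(M,[g])$ coincides with the conformal class of the round $4$-sphere and $\kappa_g=\kappa_{S^{4}}=16\pi^{2}$, contradicting the assumption. If $\kappa_g=16\pi^{2}$, the same argument with $\kappa_{\tilde g}=0$ still applies and yields the rigidity statement. The main technical difficulties I would expect to encounter are the derivation of the precise Paneitz Green function expansion (together with positivity of $G_p$ ensuring that $\tilde g$ is genuinely Riemannian away from $p$) and the careful distributional integration producing the constant $16\pi^{2}$ in $\kappa_{\tilde g}=\kappa_g-16\pi^{2}$.
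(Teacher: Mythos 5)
Your argument is correct in outline, but it takes a genuinely different route from the paper's. The paper blows up at $p$ with the conformal factor $e^{2G_p}$ where $G_p$ solves $P_gG_p+Q_g=\kappa_g\delta_p$ (so that $Q_{\hat g}\equiv 0$ but $\hat g$ is AE only when $\kappa_g=16\pi^2$), then compares with the scalar-flat AE metric $\tilde g=G_L^2g$ and performs an explicit Pohozaev-type boundary computation with $\Phi=G_P-\ln G_L$, arriving at the quantitative identity $\int_{\hat M}|\mathrm{Ric}_{\tilde g}|^2\,dV_{\tilde g}=8\omega_3\bigl(1-\tfrac{\kappa_g}{16\pi^2}\bigr)\geq 0$, from which both the inequality and the rigidity drop out. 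You instead normalise the Green function so that the blown-up metric is \emph{always} AE (coefficient $-2$ in front of $\log r$, independently of $\kappa_g$), establish the bookkeeping identity $\kappa_{\tilde g}=\kappa_g-16\pi^2$, and then invoke Proposition \ref{propc} as a black box; this is shorter and more conceptual (it is essentially the Gauss--Bonnet--Chern/conformal-invariance argument, and indeed your identity can be double-checked by subtracting the closed and AE Gauss--Bonnet--Chern formulas), whereas the paper's route showcases the boundary-integral machinery of Theorem \ref{PEthm} and yields the $L^2$-Ricci identity as a byproduct. Both reductions ultimately rest on the same ingredients (kernel of $P_g$ reduced to constants via Gursky's Theorem A, and the vanishing of $\mathcal{E}$ for AE $4$-manifolds), so there is no circularity.

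Three points deserve tightening. First, positivity of $G_p$ is irrelevant here: in dimension $4$ the conformal factor is $e^{2u}>0$ regardless of the sign of $u$, so that worry can be dropped. Second, an expansion $G_p=-\tfrac{1}{8\pi^2}\log r+A_p+o(1)$ is not enough to conclude that $\tilde g$ is AE of order $\tau>0$ with the required control on four derivatives; you need the refined expansion $G_p=-\tfrac{1}{8\pi^2}\log r+A_p+a_ix^i+b_{ij}x^ix^j+o_4(r^2)$ of Lemma \ref{Greenfunction4d.0} (and with the linear term present the decay is $O(\rho^{-1})$ after a translation-type normalisation, not $O(\rho^{-2}\log\rho)$ --- though any $\tau>0$ suffices for Proposition \ref{propc}). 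Third, ``Yamabe positivity is conformally invariant'' does not by itself transfer positivity from the closed manifold $M$ to the punctured AE manifold $\hat M$; the clean justification is the one the paper uses, namely that $Y([g])>0$ gives a positive Green function $G_L$ of the conformal Laplacian and hence the scalar-flat AE representative $G_L^2g\in[\tilde g]$ on $\hat M$, which is what the proof of Proposition \ref{propc} actually requires.
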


We would like to highlight that, in this case, our techniques make contact with the positive mass theorem of 2-dimensional manifolds, as presented in \cite{Lee}.

Finally, let us comment that, with the aim of delivering a self-contained presentation but trying to avoid a long introduction to preliminary results concerning analysis on AE-manifolds, $Q$-curvature analysis and constructions concerning conformal normal coordinates, which are things very well-known for experts in each of these fields, we have compiled the main results which will be used in this paper is the Appendices A,B and C, where detailed references can be found.

It has been brought to our attention by Marc Herzlich that one of his students, Benoît Michel, has previously analysed a similar notion of energy, see \cite{BM}.

\bigskip
{\bf Acknowledgments:} The authors would like to thank the CAPES-COFECUB, CAPES-PNPD and ANR (ANR- 18-CE40-002) for their financial support. Also, we would like thank the comments, suggestions and critics made by an anonymous referee to this paper, which have helped us improve the presentation and content of the paper.

\section{Preliminaries}

In this section we will collect some definitions and notational conventions which will be used along the paper. 

\bigskip
\noindent\textbf{Some notational conventions:}
\begin{itemize}
\item $(M^n,g)$ will denote an $n$-dimensional Riemannian manifold.
\item Given a Riemannian manifold $(M,g)$, we will denote by $\Delta_gu=g^{ij}\nabla_i\nabla_ju$ the negative Laplacian.
\item Given a Riemannian manifold $(M,g)$ we denote by $\nabla$ its Riemannian connection and define the curvature tensor by
\begin{align*}
R(V,W)Z=\nabla_V\nabla_WZ - \nabla_W\nabla_VZ - \nabla_{[V,W]}Z, \;\; V,W,Z\in \Gamma(TM),
\end{align*}
and we label its components in some coordinate system $\{x^i\}_{i=1}^n$ via $R^{i}_{jkl}=dx^i\left( R(\partial_k,\partial_l)\partial_j \right)$. Then, the Ricci tensor is defined locally via the contraction $R_{jl}=R^{k}_{jkl}$.
\item Given a tensor field $T$, indices of $\nabla^k T$ resulting from covariant differentiation will be separated by a comma. That is, if $T_{ij}$ are the components of a $2$-tensor, then we denote the components of $\nabla^2T$ by $T_{ij,kl}$. 
\item Given $\Sigma\hookrightarrow M$ an embedded compact hypersuface in $(M,g)$, we will denote by $\nu$ the outward pointing $g$-unit normal to $\Sigma$.
\item Given $\Sigma\hookrightarrow \mathbb{R}^n$ an embedded compact hypersuface, we will denote by $\nu^{e}$ Euclidean outward pointing unit normal to $\Sigma$.
\item $\omega_{n-1}$ will denote the volume of the unit sphere $\mathbb{S}^{n-1}\hookrightarrow \mathbb{R}^{n}$ and $d\omega_{n-1}$ is its canonical volume measure.
\item Given a compact manifold $(M,g)$, the conformal class of $g$ is $\{ u g\; ; \; u\in C^{\infty}(M, \R_+^*)\}$, denoted $[g]$.
\item Given a compact manifold $(M,g)$, its Yamabe invariant is defined as
\[ Y([g])= \inf_{\tilde{g}} \frac{\int_M R_{\tilde{g}} \, dv_{\tilde{g}}}{\mathrm{vol}(M)^\frac{n-2}{2}} .\]  
\item Given a function $f\in C^{k}(\mathbb{R}^n)$, we say that $f=O_k(|x|^{\tau})$ for some $k\geq 0$ and $\tau\in \mathbb{R}$ if for any multi-index $\alpha$ with $|\alpha|\leq k$, the functions $\partial^{\alpha}f= O(|x|^{\tau-|\alpha|})$ either as $|x|\rightarrow\infty$ or $|x|\rightarrow 0$, depending on the context. 
\end{itemize}

\begin{defn}[Weighted spaces] Given $x\in \mathbb{R}^n$, let us define $r(x)\doteq |x|$, $\sigma(x)\doteq(1+r^2(x))^\frac{1}{2}$ and consider $\delta \in \R$. Then, we set
    \begin{itemize}
    \item $L^p_\delta(\R^n)=\left\{  u\in L^p_{loc}\, \vert\, \int_{\R^n} \vert u\vert^p \sigma^{-\delta p- n}\, dx <+\infty \right\}$, we equip this set with the norm $\Vert u\Vert_{p,\delta}^p =\int_{\R^n} \vert u\vert^p \sigma^{-\delta p- n}\, dx$.
    \item $L^\infty_\delta(\R^n)=\left\{  u\in L^\infty_{loc}\, \vert\, \sup_{\R^n} \vert u\vert \sigma^{-\delta} <+\infty \right\}$, we equip this set with the norm $\displaystyle \Vert u\Vert_{\infty,\delta} =\sup_{\R^n} \vert u\vert \sigma^{-\delta}$.
    \item $W^{k,p}_\delta(\R^n)=\left\{  u\in W^{k,p}_{loc}\, \vert\, \sum_{j=0}^k \Vert d^j u\Vert_{p,\delta-j}  <+\infty \right\}$, we equip this set with the norm $\Vert u\Vert_{k,p,\delta}=\sum_{j=0}^k \Vert d^j u\Vert_{p,\delta-j} $.
    \end{itemize}
\end{defn}

In particular, by Sobolev embedding, when  $kp>n$ if $u \in W^{k,p}_\delta(\R^n)$ then $u =O(r^\delta)$ at infinity. See \cite{Bartnik} for more details about weighted spaces.

Then we introduce the notion of asymptotically flat manifolds.
\begin{defn}[AE manifolds]
    A complete Riemannian manifold $(M,g)$ with $g\in W^{k,q}_{loc}(M)$ for some $k\geq 1$ and $q>n$ is said to be asymptotically Euclidean (with one end) if there exists a compact set $K\subset M$ and a diffeomorphism  $\phi :M\setminus K \rightarrow \R^n\setminus \overline{B_1(0)}$ such that
    \begin{itemize}
    \item $\phi_{*}(g)$ is a uniformly positive defined metric, i.e. there exists $\lambda>1$ such that
    $$\frac{1}{\lambda} \vert \xi\vert^2 \leq g_{ij}(x)\xi^i\xi^j \leq \lambda \vert \xi\vert^2 \; \forall x\in \R^n \setminus B_1 \; \forall \xi \in \R^n.$$
    \item $$\phi_*(g)_{ij}-\delta_{ij} \in W^{k,q}_{-\tau} (\R^n\setminus B(0,1))$$
    for some $\tau >0$ called the decreasing rate.
    \end{itemize} 
\end{defn} 
    {\bf Important remark: } In this chart we defined $\sigma$ and we remark that the definition of $L^q_\delta$ is independent of the chart. But $W^{k,q}_{\delta}$ depends on the chart $\phi$, since the partial derivatives will depend on the choice of coordinates. It will be denoted $W^{k,q}_{\delta}(\phi)$. In the rest of the section, we will call such a chart {\bf a structure at infinity}. Once the structure at infinity is chosen, we naturally extend the definition of $W^{k,q}_{\delta}(\R^n\setminus B(0,1))$ to $W^{k,q}_{\delta}(M)$  on the compact part, noticing that all choices of charts in the compact region define equivalent norms.\\

In the following definition, which will be used in the core of this paper, we will restrict to AE manifold which posses more regularity than that of the general definition given above.
\begin{defn}
We will say that a (smooth) AE manifold $(M,g)$ is of order $\tau>0$ with respect to some structure at infinity $\Phi:M\backslash K\mapsto \mathbb{R}^n\backslash\bar{B}$, if, in such coordinates, $\Phi_{*}g_{ij}-\delta_{ij}=O_4(|x|^{-\tau})$.
\end{defn}

Of course we can define asymptotically flat structures with multiple ends. But since analysis phenomena are determined by the behaviour at infinity, it is very easy to isolate each end and to consider that there is only one.

\section{The positive energy theorem}

The main result of this section will be a positive energy theorem related to (\ref{4thenergy-static.2}) with its corresponding rigidity statement. But, before this, we will begin by analysing geometric conditions under which (\ref{4thenergy-static.2}) is well-defined. In the following proposition, we will establish such geometric criteria. Let us first set 
$$\tau_n = \left\{ \begin{array}{ll} 0 &\hbox{ if } n=3,4 \\  \frac{n}{2}-2 &\hbox{ if } n\geq 5\end{array} \right. .$$

\begin{prop}\label{Static.1}
Let $(M^n,g)$ be an AE manifold of dimension $n\geq 3$ satisfying the following conditions
\begin{enumerate}[label=\roman*)]
\item There are end rectangular coordinates, given by a structure of infinity $\Phi$, where $g_{ij}=\delta_{ij}+O_4(r^{-\tau})$, where $\tau>\tau_n$;
\item The $Q$-curvature of $g$ is in $L^1(M,dV_{g})$.
\end{enumerate}
\noindent Then, given an exhaustion of $M$ by compact sets $\Omega_k$ such that $S_k\doteq \Phi(\partial \Omega_k)$ are smooth connected $(n-1)$-dimensional manifolds without boundary in $\mathbb{R}^n$ satisfying
\begin{align}
\begin{split}
R_k\doteq \inf\{|x|: x\in S_k\}\xrightarrow[k\rightarrow\infty]{} \infty,\\
R_k^{n-1}\mathrm{area}(S_k) \text{ is bounded as } k\rightarrow\infty,
\end{split}
\end{align}
the limit 
\begin{align}\label{Energy.1}
\mathcal{E}^{(\Phi)}(g)= \lim_{k\rightarrow\infty}   \int_{S_k} \left( \partial_{j}\partial_{i}\partial_{i}g_{aa} - \partial_{j}\partial_{u}\partial_{i}g_{u i}\right)\nu^{j}dS,
\end{align}
exists and is independent of the sequence of $\{S_k\}$ used to compute it. 
\end{prop}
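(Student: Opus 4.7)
The plan is to rewrite the boundary integrand as (minus) the normal derivative of the linearized scalar curvature and reduce the whole question to an $L^1$ bound on its flat Laplacian, which in turn follows from the $Q$-curvature hypothesis. Setting $h_{ij}\doteq g_{ij}-\delta_{ij}$, the linearization of $R_g$ at the Euclidean metric applied to $h$ is
\begin{align*}
R^{\mathrm{lin}}=\partial_i\partial_j h_{ij}-\partial_i\partial_i h_{jj}=\partial_u\partial_i g_{ui}-\partial_i\partial_i g_{aa},
\end{align*}
so the integrand in (\ref{Energy.1}) is exactly $-(\partial_j R^{\mathrm{lin}})\nu^j$, and the flat divergence of the vector field $F_j\doteq-\partial_j R^{\mathrm{lin}}$ equals $-\Delta R^{\mathrm{lin}}$. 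Thus, once I know $\Delta R^{\mathrm{lin}}\in L^1(\mathbb{R}^n\setminus\overline{B_1})$, the divergence theorem applied to the annular region $U_k\doteq\Phi(\Omega_k)\cap(\mathbb{R}^n\setminus\overline{B_1})$, whose oriented boundary is $S_k\cup(-\partial B_1)$, gives
\begin{align*}
\int_{S_k}F_j\nu^j\,dS=\int_{\partial B_1}F_j\hat{r}^j\,dS-\int_{U_k}\Delta R^{\mathrm{lin}}\,dx,
\end{align*}
and since $R_k\to\infty$ forces $U_k$ to exhaust $\mathbb{R}^n\setminus\overline{B_1}$, the right-hand side converges to a quantity independent of the sequence $\{S_k\}$, proving both existence and independence.

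The required integrability is obtained by comparing $\Delta R^{\mathrm{lin}}$ with $Q_g$. Using the formula given in the introduction,
\begin{align*}
\Delta_g R_g=-2(n-1)Q_g-\frac{4(n-1)}{(n-2)^2}|\mathrm{Ric}_g|^2_g+\frac{n^3-4n^2+16n-16}{4(n-1)(n-2)^2}R_g^2.
\end{align*}
Under the decay $g_{ij}-\delta_{ij}=O_4(r^{-\tau})$, both $R_g$ and $\mathrm{Ric}_g$ are $O(r^{-\tau-2})$, so the two quadratic pieces above are $O(r^{-2\tau-4})$. Likewise, the corrections $(\Delta_g-\Delta)R_g$ and $\Delta(R_g-R^{\mathrm{lin}})$ are $O(r^{-2\tau-4})$, because $g^{ij}-\delta^{ij}=O(r^{-\tau})$, the Christoffel symbols are $O(r^{-\tau-1})$, and $R_g-R^{\mathrm{lin}}$ is quadratic in $(h,\partial h,\partial^2 h)$. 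Altogether,
\begin{align*}
\Delta R^{\mathrm{lin}}=-2(n-1)Q_g+O(r^{-2\tau-4}).
\end{align*}
The remainder is integrable on $\{|x|\geq 1\}$ precisely when $2\tau+4>n$, i.e.\ $\tau>\tau_n$, which is exactly hypothesis (i). Combined with $Q_g\in L^1(M,dV_g)$ and $dV_g=(1+O(r^{-\tau}))dx$ at infinity, this yields $\Delta R^{\mathrm{lin}}\in L^1(\mathbb{R}^n\setminus\overline{B_1})$, completing the reduction outlined in the first paragraph.

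The main technical obstacle is the careful bookkeeping of the nonlinear remainder: the two curvature-squared pieces of $Q_g$, the difference $(\Delta_g-\Delta)R_g$, and $\Delta(R_g-R^{\mathrm{lin}})$ are all schematically of the form $h\cdot\partial^4 h+\partial h\cdot\partial^3 h+(\partial^2 h)^2$, and one must verify they genuinely collapse into an $O(r^{-2\tau-4})$ pointwise bound rather than something worse from chain-rule expansions of $\Delta_g R_g$. The value $\tau_n=\max\{0,(n-4)/2\}$ appearing in the proposition is the sharp threshold at which this $r^{-2\tau-4}$ tail becomes integrable over $\mathbb{R}^n$; below it the $Q$-curvature integral and the boundary fluxes decouple and the limit in (\ref{Energy.1}) need not exist, which is what forces the stated decay assumption.
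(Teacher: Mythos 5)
Your proposal is correct and follows essentially the same route as the paper: both identify the flux integrand with $-\partial_j R^{\mathrm{lin}}\nu^j$ for the linearized scalar curvature, apply the flat divergence theorem on annular regions so that existence and sequence-independence of the limit reduce to $\Delta R^{\mathrm{lin}}\in L^1$ near infinity, and then trade $\Delta R^{\mathrm{lin}}$ for $-2(n-1)Q_g$ up to an $O(r^{-2\tau-4})$ remainder whose integrability is exactly the condition $2\tau+4>n$, i.e.\ $\tau>\tau_n$. The only cosmetic difference is that the paper phrases the convergence as a Cauchy-sequence argument between consecutive surfaces $S_k,S_{k+1}$, whereas you integrate against a fixed inner boundary $\partial B_1$; these are equivalent.
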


\begin{proof}
From the decaying conditions and the familiar expression
\begin{align*}
R_g&=\partial_i\partial_jg_{ij} - \partial_j\partial_jg_{ii} + O_2((g-\delta)\partial^2g) + O_2(\left(\partial g\right)^2),\\
&=\partial_i\partial_jg_{ij} - \partial_j\partial_jg_{ii} + O_2(r^{-2\tau-2})
\end{align*}
we get
\begin{align*}
\Delta_gR_g&=g^{ab}\nabla_a\nabla_bR_g=g^{ab}\left(\partial_a\partial_bR_g + \Gamma^c_{ab}\partial_cR_g \right),\\
&=g^{ab}\partial_a\partial_b\Big(\partial_i\partial_jg_{ij} - \partial_j\partial_jg_{ii}\Big) + g^{ab}\partial_a\partial_b\Big(O_2(r^{-2\tau-2})\Big) + g^{ab}\Gamma^c_{ab}\partial_c\left(\partial_i\partial_jg_{ij} - \partial_j\partial_jg_{jj}\right) \\
&+ g^{ab}\Gamma^c_{ab}\partial_c\left(O_2\left(r^{-2\tau-2}\right)\right),\\
&=g^{ab}\partial_a\partial_b\Big(\partial_i\partial_jg_{ij} - \partial_j\partial_jg_{ii}\Big) + \left(g^{ab}-\delta^{ab}\right)O(r^{-2\tau-4}) + O(r^{-2\tau-4}) \\
&+ O_3\left((g-\delta)\partial g\right))O_1(r^{-\tau-3}) +  O_3(\partial g)O_1(r^{-\tau-3}) + O_3\left((g-\delta)\partial g\right))O_1(r^{-2\tau-3}) \\
&+  O_3(\partial g)O_1(r^{-2\tau-3}),\\
&=\partial_a\partial_a\Big(\partial_i\partial_jg_{ij} - \partial_j\partial_jg_{ii}\Big) + O(r^{-2\tau-4}). 
\end{align*}
In particular, denoting by $D_k$ the annular region between $S_{k+1}$ and $S_{k}$, we get that
\begin{align*}
\int_{D_k}\partial_j\partial_{j}\left(\partial_{u}\partial_{i}g_{u i} - \partial_{u}\partial_{u}g_{ii}\right)dV_e&=\int_{S_{k+1}}(\partial_{j}\partial_{u}\partial_{i}g_{u i} - \partial_{j}\partial_{u}\partial_{u}g_{ii})\nu^e_jdS^{n-1} \\
&- \int_{S_{k}}(\partial_{j}\partial_{u}\partial_{i}g_{u i} - \partial_{j}\partial_{u}\partial_{u}g_{ii})\nu^e_jdS^{n-1},
\end{align*}
where, following our convention, $\nu^{e}$ denotes the outward pointing Euclidean unit normal to each hypersurface. Therefore, if the left-hand side is integrable over $\mathbb{R}^n\backslash K$, then the above boundary integrals form a Cauchy sequence as $r\rightarrow\infty$ and therefore (\ref{Energy.1}) is well defined. But from the above computations, we see that this can be reduced to $\Delta_gR_g\in L^1(M)$ and $2(\tau+2)>n$, that is $\tau>\tau_n$. In particular, notice that under our decaying conditions $R_g,\mathrm{Ric}_g=O(r^{-\tau-2})$ near infinity, implying that
\begin{align*}
Q_g=-\frac{1}{2(n-1)}\Delta_gR_g + O(r^{-2\tau - 4}),
\end{align*}
therefore the leading order is carried on the first term and $\Delta_gR_g\in L^1(M)$ can be replaced by $Q_g\in L^1(M)$.
\end{proof}

\bigskip

Now, we intend to show that (\ref{4thenergy-static.2}) is a geometric object, independent of the structure of infinity we use. With this in mind, let us start by rewriting (\ref{4thenergy-static.2}) in more geometric fashion, which will be proved to be more useful for its analysis.
\begin{prop}\label{Geometricmass}
Let $(M,g)$ be an AE manifold which satisfies the decaying conditions i) and ii) of Proposition \ref{Static.1}. Then, we can rewrite the energy (\ref{4thenergy-static.2}) as
\begin{align}\label{Static.2}
\mathcal{E}(g)&=-\lim_{r\rightarrow\infty}\int_{S^{n-1}_1}\partial_r R_g r^{n-1}d\omega_{n-1}.
\end{align}
\end{prop}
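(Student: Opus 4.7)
The plan is to identify the integrand of (\ref{4thenergy-static.2}) as, up to lower-order terms, minus the Euclidean normal derivative of the scalar curvature $R_g$, and then to argue that the lower-order contributions vanish in the limit thanks to the decay assumptions already used in Proposition \ref{Static.1}.

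First, I would revisit the expansion employed in the proof of Proposition \ref{Static.1}, namely
\[
R_g = \partial_i\partial_j g_{ij} - \partial_j\partial_j g_{ii} + E(g),
\]
with an error $E(g) = O_2\!\left((g-\delta)\partial^2 g\right) + O_2\!\left((\partial g)^2\right) = O_2(r^{-2\tau-2})$. Differentiating this identity in $x^j$ and contracting with the Euclidean unit normal $\nu^j$ to $S^{n-1}_r$, so that $\nu^j\partial_j = \partial_r$, one obtains for every sufficiently large $r$
\[
\int_{S^{n-1}_r}\!\!\left(\partial_j\partial_i\partial_i g_{aa} - \partial_j\partial_u\partial_i g_{ui}\right)\nu^j\, d\omega_r \;=\; -\!\int_{S^{n-1}_r}\!\partial_r R_g\, d\omega_r \;+\;\int_{S^{n-1}_r}\!\nu^j\partial_j E(g)\, d\omega_r.
\]

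Next, I would bound the error integral. A direct application of the Leibniz rule to $E(g)$, using $g-\delta = O_4(r^{-\tau})$, yields $\partial_j E(g) = O(r^{-2\tau-3})$. Since $\mathrm{area}(S^{n-1}_r)$ grows like $r^{n-1}$, the error integral is bounded by $C\, r^{\,n-2\tau-4}$. The hypothesis $\tau > \tau_n$, which reads $\tau>(n-4)/2$ for $n\geq 5$ and $\tau>0$ for $n\in\{3,4\}$, gives $n-2\tau-4 < 0$ in all admissible dimensions, so the error tends to zero as $r\to\infty$. Combined with the fact that Proposition \ref{Static.1} already guarantees existence of the limit on the left, this shows
\[
\mathcal{E}(g) \;=\; -\lim_{r\to\infty}\int_{S^{n-1}_r}\partial_r R_g\, d\omega_r,
\]
which, after the change of variables $d\omega_r = r^{n-1}\, d\omega_{n-1}$, is exactly the formula (\ref{Static.2}) to be proved.

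The only real point to watch is the bookkeeping for the error $E(g)$: one must check that the $O_2$ decay of $E(g)$ persists, with one power of $r$ lost, after applying an additional partial derivative. This is immediate from the Leibniz rule under the assumed $O_4(r^{-\tau})$ regularity of $g-\delta$, and no analytical input beyond what underpins Proposition \ref{Static.1} is required.
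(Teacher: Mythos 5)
Your argument is correct and follows essentially the same route as the paper: both rely on the expansion $R_g=\partial_i\partial_jg_{ij}-\partial_j\partial_jg_{ii}+O_2(r^{-2\tau-2})$, differentiate once, and discard the error over $S^{n-1}_r$ using $n-2\tau-4<0$, which is guaranteed by $\tau>\tau_n$. The only cosmetic difference is that the paper phrases the computation via the $g$-unit normal and $g(\nabla R_g,\nu)$ before reducing to the Euclidean radial derivative, whereas you work with $\nu^j\partial_j=\partial_r$ directly; the estimates are the same.
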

\begin{proof}
Let us denote by $\nu$ the outward $g$-unit normal to $S^{n-1}_r\hookrightarrow \mathbb{R}^n$, with $r$ sufficiently large, and by $\nu^e$ the outward euclidean unit normal to the same sphere. Then, it follows that $|\nu^e-\nu|_g=O(r^{-\tau})$. Thus, we find that
\begin{align*}
g(\nabla R_g,\nu)&=g(\nabla R_g,\nu^e)+ g(\nabla R_g,\nu-\nu^e),\\
&=\partial_iR_g\nu^e_i + \underbrace{(g-\delta)_{ij}\nabla^iR_g(\nu^e)^j}_{O_1(r^{-2\tau - 3})} + \underbrace{\partial_iR_g(\nu-\nu^e)_i}_{O_1(r^{-2\tau-3})} + \underbrace{(g-\delta)_{ij}\nabla^iR_g(\nu-\nu^e)^j}_{O_1(r^{-3\tau - 3})}.
\end{align*}
Also, we already know that $R_g=\partial_i\partial_jg_{ij} - \partial_j\partial_jg_{ii} + O_2(r^{-2\tau-2})$, which implies
\begin{align*}
g(\nabla R_g,\nu)&=\partial_i\left(\partial_k\partial_jg_{kj} - \partial_j\partial_jg_{kk}\right)\frac{x^i}{r} +  O_1(r^{-2\tau-3}) 
\end{align*}
Thus, 
\begin{align*}
\int_{S^{n-1}_1}g(\nabla R_g,\nu)r^{n-1}d\omega_{n-1}&=\int_{S^{n-1}_1}\partial_i\left(\partial_k\partial_jg_{kj} - \partial_j\partial_jg_{kk}\right)\frac{x^i}{r}r^{n-1}d\omega_{n-1} + O_1(r^{n-2\tau-4}).
\end{align*}
Now, from our decaying condition $\tau>\tau_n$, we see that $n-2\tau-4<0$, which implies that in the limit $r\rightarrow\infty$ the last term vanishes. 
Therefore, under these conditions
\begin{align*}
\lim_{r\rightarrow\infty}\int_{S^{n-1}_1}g(\nabla R_g,\nu)r^{n-1}d\omega_{n-1}&=\lim_{r\rightarrow\infty}\int_{S^{n-1}_1}\partial_i\left(\partial_k\partial_jg_{kj} - \partial_j\partial_jg_{kk}\right)\nu^e_ir^{n-1}d\omega_{n-1}.
\end{align*}
That is, 
\begin{align}
\begin{split}
\mathcal{E}(g)&=-\lim_{r\rightarrow\infty}\int_{S^{n-1}_1}g(\nabla R_g,\nu)r^{n-1}d\omega_{n-1},\\
&=-\lim_{r\rightarrow\infty}\int_{S^{n-1}_1}\nabla R_g\cdot \nu^e r^{n-1}d\omega_{n-1},\\
&=-\lim_{r\rightarrow\infty}\int_{S^{n-1}_1}\partial_r R_g r^{n-1}d\omega_{n-1}.
\end{split}
\end{align}
\end{proof}

With the aid of the above two results, we can establish the following theorem, which establishes that $\mathcal{E}(g)$ is a geometric object within a suitable class of AE manifolds. 

\begin{thm}\label{Uniqueness.1}
Let $(\phi,x)$ and $(\psi,y)$ be two structures of infinity for the AE manifold $(M,g)$ which satisfies the conditions i) and ii) of the above Proposition with decay rates $\tau_1,\tau_2$ respectively, satisfying $\tau\doteq \min\{\tau_1,\tau_2 \}> \tau_n$. Then, the energies $\mathcal{E}^{(\phi)}(g)$ and $\mathcal{E}^{(\psi)}(g)$ are well-defined and equal. Moreover, the following coordinate independent identity follows:
\begin{align}\label{EnergyBM}
\mathcal{E}(g)=-\int_{M}\Delta_gR_gdV_g.
\end{align}
\end{thm}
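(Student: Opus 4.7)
The plan is to establish the intrinsic formula (\ref{EnergyBM}) first; once this is in place the coordinate-independence of $\mathcal{E}(g)$ is automatic, since the right-hand side of (\ref{EnergyBM}) makes no reference to a structure at infinity.

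Fixing the chart $(\phi,x)$, I would start from Proposition \ref{Geometricmass}, which gives
\[ \mathcal{E}^{(\phi)}(g)=-\lim_{r\to\infty}\int_{S^{n-1}_r}\partial_r R_g\, dA_e, \]
with $dA_e=r^{n-1}d\omega_{n-1}$ the Euclidean surface measure on coordinate spheres. The goal is to recognise the integrand as a genuinely $g$-geometric object, namely $g(\nabla R_g,\nu)\, dA_g$ with $\nu$ and $dA_g$ the outward unit normal and surface measure of $g$. The replacements $\nu^e\mapsto\nu$ and $dA_e\mapsto dA_g$ each produce multiplicative errors of order $O(r^{-\tau})$, while $|\nabla R_g|=O(r^{-\tau-3})$; integrating the resulting discrepancy on $S^{n-1}_r$ yields a boundary error of size $O(r^{n-2\tau-4})$, which tends to $0$ as $r\to\infty$ precisely under the hypothesis $\tau>\tau_n$. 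This is the most delicate step, essentially a quantitative refinement of the comparison already performed in the proof of Proposition \ref{Geometricmass}.

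Next I would exhaust $M$ by compact sets $\Omega_k$ whose boundaries are pulled back from the coordinate spheres in the $(\phi,x)$ chart, and apply the Riemannian divergence theorem,
\[ \int_{\Omega_k}\Delta_g R_g\, dV_g=\int_{\partial\Omega_k} g(\nabla R_g,\nu)\, dA_g. \]
The decay calculation performed in the proof of Proposition \ref{Static.1} shows that $\Delta_g R_g$ differs from $-2(n-1)Q_g$ by terms of order $O(r^{-2\tau-4})$, which are integrable on $M$ since $\tau>\tau_n$; together with hypothesis (ii) this forces $\Delta_g R_g\in L^1(M,dV_g)$. Passing to the limit $k\to\infty$, the left-hand side converges to $\int_M\Delta_g R_g\, dV_g$ and the right-hand side converges to $-\mathcal{E}^{(\phi)}(g)$ by the previous paragraph, yielding (\ref{EnergyBM}).

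Finally, rerunning the identical argument in the second chart $(\psi,y)$ shows $\mathcal{E}^{(\psi)}(g)=-\int_M\Delta_g R_g\, dV_g$ as well, so $\mathcal{E}^{(\phi)}(g)=\mathcal{E}^{(\psi)}(g)$. The only real obstacle is the sharp estimate in the Euclidean-to-$g$ swap on the boundary; everything else reduces to the divergence theorem and to calculations already carried out for Propositions \ref{Static.1} and \ref{Geometricmass}.
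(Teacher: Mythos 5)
Your proposal is correct and follows essentially the same route as the paper: pass from the Euclidean flux integral of Proposition \ref{Geometricmass} to the $g$-geometric flux $g(\nabla R_g,\nu)\,dA_g$ using the $O(r^{n-2\tau-4})$ error estimate, apply the divergence theorem on an exhaustion, and conclude chart-independence from the intrinsic right-hand side of (\ref{EnergyBM}). Your explicit remark that $\Delta_g R_g\in L^1(M,dV_g)$ (via the comparison with $Q_g$ from Proposition \ref{Static.1}) is a point the paper leaves implicit when passing to the limit of the volume integrals, but it is the same argument.
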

\begin{proof}
Appealing to Proposition \ref{Static.1}, let us compute the energy $\mathcal{E}^{\phi}(g)$, associated to the structure of infinity given by $(\phi,x)$ using a sequence of spheres near infinity of radii $\{r_{k}\}_{k=1}^{\infty}$. From the above proposition, we have that
\begin{align*}
\mathcal{E}^{(\phi)}(g)=-\lim_{|x|\rightarrow\infty}\int_{S^{n-1}_1}\nabla R_g \lrcorner dV_e,
\end{align*}
where $dV_e=r^{n-1}dr\wedge d\omega_{n-1}$ stands for the canonical Euclidean volume form. Appealing to the decaying conditions of $g$, we know that $\sqrt{\mathrm{det}(g)}=1+O(r^{-\tau})$ and therefore
\begin{align*}
\int_{S^{n-1}_1}\nabla R_g \lrcorner dV_g=\int_{S^{n-1}_1}\nabla R_g \lrcorner dV_e + O(r^{n-2\tau-4}).
\end{align*}
Since $n-2\tau-4<0$ under our hypotheses, the above implies that
\begin{align*}
\mathcal{E}^{(\phi)}(g)=-\lim_{r\rightarrow\infty}\int_{S^{n-1}_r}\nabla R_g \lrcorner dV_g.
\end{align*}
Finally, since $\mathrm{div}_g(\nabla R_g)dV_g=d(\nabla R_g\lrcorner dV_g)$, we find that
\begin{align*}
\mathcal{E}^{(\phi)}(g)=-\lim_{r\rightarrow\infty}\int_{D_r}\Delta_gR_gdV_g=-\int_{M}\Delta_gR_gdV_g,
\end{align*}
where $D_r\hookrightarrow M$ is the inner region in $M$ with $\partial D_r=S_r^{n-1}$. In particular, the right-hand side of the above expression is independent of the coordinates used near infinity. Thus we can drop the reference to $(\phi,x)$ and claim that for any structure of infinity of $M$ were $g$ satisfies conditions (i)-(ii) of Proposition \ref{Static.1}, it holds that
\begin{align*}
\mathcal{E}(g)=-\int_{M}\Delta_gR_gdV_g.
\end{align*}
\end{proof}

\begin{remark}
Let us highlight that the form for $\mathcal{E}$ given by (\ref{EnergyBM}) allows us to make contact with the ideas of \cite{Michel2}, where the author introduced a general method to produce \emph{asymptotic charges} from a Riemannian functional on a complete manifold with a model asymptotic structure. In particular, (\ref{EnergyBM}) is an example of such a functional. Let us nevertheless stress that the definition of energy we are interested in is the one (a priori) defined by equation (\ref{4thenergy-static.2}), which is the quantity appearing as a conserved charge for the fourth order gravitational theories described in the introduction. In this sense, the connection between (\ref{4thenergy-static.2}) and (\ref{EnergyBM}) is only revealed after the work done above through Propositions \ref{Static.1}, \ref{Geometricmass} and Theorem \ref{Uniqueness.1}, after which the conditions necessary to achieve asymptotic invariance have already been shown explicitly. Such conditions correspond to a somewhat simpler and explicit formulation of the corresponding abstract conditions presented in \cite{Michel2}, which must be checked in a case by case basis.
\end{remark}

Besides establishing sufficient conditions for the energy to be a well-defined intrinsic geometric object, the above theorem provides us with an easy positive energy corollary.
\begin{coro}\label{PEthm.0}
Let $(M^n,g)$ be an $n$-dimensional AE manifold, with $n\geq 3$, which satisfies the decaying conditions i) and ii) of Proposition \ref{Static.1} and such that $\Delta_gR_g\leq 0$. Then, the fourth order energy $\mathcal{E}(g)$ is non-negative and $\mathcal{E}(g)=0$ if and only if $(M,g)$ is scalar flat.
\end{coro}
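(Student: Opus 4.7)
The plan is to leverage the coordinate-free identity
\begin{align*}
\mathcal{E}(g) = -\int_M \Delta_g R_g \, dV_g
\end{align*}
established in Theorem \ref{Uniqueness.1}, which applies precisely because hypotheses (i) and (ii) of Proposition \ref{Static.1} are in force. Given the pointwise assumption $\Delta_g R_g \leq 0$, the non-negativity $\mathcal{E}(g)\geq 0$ is immediate from integration: the integrand on the right is non-positive, so the overall sign flip makes $\mathcal{E}(g)\geq 0$.

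For the equivalence in the rigidity statement, the easy direction is trivial: if $(M,g)$ is scalar flat, then $R_g\equiv 0$, hence $\Delta_g R_g\equiv 0$, and the identity gives $\mathcal{E}(g)=0$. For the converse, assume $\mathcal{E}(g)=0$. Then the non-positive $L^1$-function $\Delta_g R_g$ has vanishing integral, so it must vanish identically on $M$. Thus $R_g$ is a $g$-harmonic function on the complete Riemannian manifold $(M,g)$.

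The remaining step is to promote harmonicity to $R_g\equiv 0$. The decay hypothesis (i), together with the asymptotic expression $R_g = \partial_i\partial_j g_{ij} - \partial_j\partial_j g_{ii} + O(r^{-2\tau-2})$ already used in the proof of Proposition \ref{Static.1}, forces $R_g = O(r^{-\tau-2})$ and hence $R_g\to 0$ at infinity on the single end of $M$. Applying the standard maximum principle on a complete non-compact manifold to both $R_g$ and $-R_g$ — bounded $g$-harmonic functions that vanish at infinity cannot attain a positive maximum or a negative minimum in the interior — yields $R_g\equiv 0$, completing the rigidity statement. The only step requiring any care is this last invocation of the maximum principle in the non-compact setting, but for a bounded harmonic function tending to zero at infinity on an AE manifold it is classical, and no new analytical input is needed beyond what Theorem \ref{Uniqueness.1} already provides.
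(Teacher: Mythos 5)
Your proof is correct and follows essentially the same route as the paper: positivity is read off directly from the identity $\mathcal{E}(g)=-\int_M\Delta_gR_g\,dV_g$ of Theorem \ref{Uniqueness.1}, and rigidity reduces to showing that the decaying $g$-harmonic function $R_g$ must vanish. The only difference is in the final step, where the paper cites the injectivity of $\Delta_g$ on weighted spaces with negative weight (Theorem \ref{deltag}) while you invoke the maximum principle for a bounded harmonic function tending to zero at infinity; the two are interchangeable here.
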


The positivity statement in the above corollary is self-evident. On the other hand, the rigidity statement follows from the injectivity of the Laplacian under our hypotheses (see (\ref{deltag})). 

Let us notice that the kind of rigidity that we get from the above corollary is quite weak. In fact, although it is well-known that topological obstructions exist, the set of AE manifolds with zero scalar curvature is in general quite large (see, for instance, the discussion after Theorem \ref{PEthm}). Therefore, in order to get a stronger rigidity statement, we will need to impose so other geometric condition. We will explore this in the next proposition and in the main theorem given below. The following proposition, which is an adaptation of a result in \cite{Malchiodi1} will be used to insure that $R_g>0$ as long as $g$ is not flat. 

\begin{prop}\label{Maxprinciple.1}
Let $(M^n,g)$ be an AE Riemannian manifold with $n\geq 3$ satisfying (1) $Q_g\geq 0$ and (2) $R_g\geq 0$. Then, either $R_g>0$ or $g$ must be flat.
\end{prop}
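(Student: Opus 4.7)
The plan is to convert the hypothesis $Q_g \geq 0$ into a pointwise differential inequality on $R_g$ and then run a strong minimum principle. Starting from the defining formula
\[
Q_g = -\frac{1}{2(n-1)}\Delta_g R_g - \frac{2}{(n-2)^2}|\Ric_g|_g^2 + \frac{n^3-4n^2+16n-16}{8(n-1)^2(n-2)^2} R_g^2,
\]
and discarding the nonnegative Ricci term, the hypothesis $Q_g \geq 0$ yields
\[
\Delta_g R_g \leq \lambda_n R_g^2, \qquad \lambda_n := \frac{n^3-4n^2+16n-16}{4(n-1)(n-2)^2},
\]
and a quick check (the cubic $n^3-4n^2+16n-16$ has derivative $3n^2-8n+16$ with negative discriminant, and is already positive at $n=2$) shows $\lambda_n > 0$ for every $n\geq 3$. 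I would rewrite this as $\Delta_g R_g + c(x) R_g \leq 0$ with $c(x):=-\lambda_n R_g(x)$; by hypothesis (2) we have $c \leq 0$, so we land exactly in the setting of the strong minimum principle for linear second-order elliptic operators (Gilbarg--Trudinger, Thm.~3.5).

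Applied to the smooth nonnegative supersolution $R_g$, this principle delivers the clean dichotomy: if $R_g$ vanishes at any point $p\in M$, then it attains the nonpositive minimum value $0$ at an interior point and so must be constant on the connected manifold $M$, forcing $R_g\equiv 0$; otherwise $R_g > 0$ everywhere on $M$.

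It remains to upgrade the branch $R_g\equiv 0$ to flatness of $g$. Substituting $R_g\equiv 0$ back into the defining formula gives $Q_g = -\tfrac{2}{(n-2)^2}|\Ric_g|_g^2$, and combined with $Q_g\geq 0$ this forces $\Ric_g\equiv 0$. To pass from Ricci-flat to flat on an AE manifold, I would invoke Bishop--Gromov volume comparison: for a complete manifold with $\Ric_g \geq 0$ the asymptotic volume ratio $\lim_{r\to\infty}\mathrm{Vol}(B_r(p))/(\omega_{n-1}r^n/n)$ is always at most $1$, and the rigidity case --- equality --- holds if and only if $(M,g)$ is isometric to $(\R^n,\delta)$. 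The AE hypothesis manifestly yields asymptotic volume ratio $1$, so we land in the rigidity case and conclude that $g$ is flat. This last step is the only one appealing to a nontrivial rigidity theorem; the rest of the argument is the essentially automatic chain \emph{$Q$-curvature inequality $\Rightarrow$ elliptic inequality $\Rightarrow$ strong minimum principle $\Rightarrow$ dichotomy}.
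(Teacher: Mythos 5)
Your proposal is correct and follows essentially the same route as the paper: both arguments discard the nonnegative Ricci term to obtain the differential inequality $\Delta_g R_g - c_1(n)R_g^2 \leq 0$, apply a strong minimum principle to the nonnegative supersolution $R_g$ (the paper cites Lemma~4 of Maxwell, you cite Gilbarg--Trudinger Thm.~3.5 --- the same underlying tool) to get the dichotomy $R_g>0$ or $R_g\equiv 0$, then in the latter case deduce $\mathrm{Ric}_g\equiv 0$ from $Q_g\geq 0$ and conclude flatness via the equality case of Bishop--Gromov volume comparison on an AE manifold. Your explicit verification that the coefficient $\lambda_n$ is positive for all $n\geq 3$ is a nice touch that the paper leaves implicit.
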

\begin{proof}
If $Q_g\geq 0$, then we find that
\begin{align*}
\Delta_gR_g - c_1(n)R^2_g \leq -  c_2(n)|\mathrm{Ric}_g|^2_g\leq 0,
\end{align*}
for some constants $c_1(n),c_2(n)>0$. From the the condition $R_g\geq 0$ we can appeal to Lemma 4 in \cite{Maxwell1} to conclude that if $R_g$ vanishes at a single point, then it must be identically zero. Thus, we already see that either $R_g>0$ or $R_g\equiv 0$. In the second case, we find that $0\leq Q_g=-c_2(n)|\mathrm{Ric}_g|^2_g\leq 0$, and thus $\mathrm{Ric}_g\equiv 0$,
which implies (from asymptotic flatness) that $g$ is flat. Indeed, since $\mathrm{Ric}_g \geq 0$, then by Bishop-Gromov theorem insures that $r\mapsto \frac{vol(B_g(p,r))}{\omega_rr^n}$ is non-increasing, hence it must be constant by asymptotic flatness, finally the equality case of Bishop-Gromov theorem implies that $g$ must be flat.
\end{proof}


\bigskip
We will now present the main result of this paper. We should highlight that, in particular, the proof of the rigidity statement in the following theorem follows ideas close to the proofs of the positive mass theorems associated to the Paneitz operator of \cite{Raulot,Malchiodi1,hangyang2}.

\begin{thm}\label{PEthm}
Let $(M^n,g)$ be an $n$-dimensional AE manifold, with $n\geq 3$, which satisfies the decaying conditions i) and ii) of Proposition \ref{Static.1} and such that $Q_g,R_g\geq 0$. Then, the fourth order energy $\mathcal{E}(g)$ is non-negative and $\mathcal{E}(g)=0$ if and only if $Q_g \equiv 0$ and $(M,g)$ is isometric to $(\mathbb{R}^n,\cdot)$.
\end{thm}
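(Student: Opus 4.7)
The first move is a clean dichotomy. Since we are given $Q_g\geq 0$ and $R_g\geq 0$, Proposition \ref{Maxprinciple.1} applies: either $(M,g)$ is already flat, in which case asymptotic flatness plus completeness forces $(M,g)\cong(\mathbb{R}^n,\delta)$, and both $\mathcal{E}(g)=0$ and $Q_g\equiv 0$ hold trivially, or else $R_g>0$ strictly on all of $M$. I would dispose of the flat case first and work throughout the rest of the proof under the assumption $R_g>0$.

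In that strictly positive case, the strategy is conformal reduction combined with the intrinsic formula from Theorem \ref{Uniqueness.1}. Because $R_g\geq 0$ and $(M,g)$ is AE, the conformal Laplacian $L_g=-\tfrac{4(n-1)}{n-2}\Delta_g+R_g$ is coercive on the weighted Sobolev spaces of Appendix A, so Fredholm theory (in the style of Bartnik) produces a unique $u>0$ with $u-1\in W^{k,q}_{-\delta}$ for an appropriate $\delta>0$ solving $L_gu=0$, positivity being enforced by the maximum principle. The conformal metric $\tilde g=u^{4/(n-2)}g$ is then scalar-flat and still AE with the same rough decay rate as $g$, and the identity $R_g=\tfrac{4(n-1)}{n-2}\,\Delta_g u/u$ furnishes the main substitution.

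The core computation is to combine the invariant expression $\mathcal{E}(g)=-\int_M\Delta_gR_g\,dV_g$ from Theorem \ref{Uniqueness.1} with the pointwise $Q$-curvature identity, rewritten as
\begin{equation*}
-\Delta_gR_g=2(n-1)Q_g+\frac{4(n-1)}{(n-2)^2}|\mathrm{Ric}_g|^2-\frac{n^3-4n^2+16n-16}{4(n-1)(n-2)^2}R_g^2,
\end{equation*}
so that $\mathcal{E}(g)$ splits as the sum of a manifestly non-negative Q-integral, a manifestly non-negative Ricci-squared integral, and a term with indefinite sign coming from $\int_M R_g^2\,dV_g$. The last, negative, contribution is the one that must be controlled: here I would multiply the equation $L_gu=0$ by suitable test functions (such as $R_g$, $R_g/u$, or $\log u$), integrate by parts over exhausting domains, and verify that the boundary terms at infinity vanish precisely when $\tau>\tau_n$, which is why the decay hypothesis (i) is exactly right. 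The integration by parts is meant to rewrite $\int R_g^2$ in terms of a non-negative gradient quantity in $u$ plus a boundary-at-infinity piece that can be absorbed into the remaining positive integrals, following the template of the Paneitz positive mass proofs in \cite{Raulot,Malchiodi1,hangyang2} cited by the authors. The main obstacle is exactly this algebraic absorption: one must exhibit the correct conformal weighting so that the resulting expression for $\mathcal{E}(g)$ is a sum of squares of quantities in $u$, $\nabla u$ and $\mathrm{Ric}_g$, plus $2(n-1)\int Q_g\,dV_g$, with no leftover negative terms.

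Granting that identity, both conclusions become immediate. Positivity follows because every summand is non-negative under $Q_g\geq 0$. For rigidity, $\mathcal{E}(g)=0$ forces each non-negative piece to vanish: in particular $Q_g\equiv 0$, the conformal factor satisfies $u\equiv 1$ (so that already $R_g$ is very constrained), and the gradient-type terms vanishing yield, via the traced second Bianchi identity, that $\mathrm{Ric}_g\equiv 0$; Bishop--Gromov rigidity together with the AE structure then produces the isometry with $(\mathbb{R}^n,\delta)$, exactly as in the final step of Proposition \ref{Maxprinciple.1}.
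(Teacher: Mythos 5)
Your opening dichotomy via Proposition \ref{Maxprinciple.1} and the reduction to the case $R_g>0$ match the paper, and passing to a scalar-flat conformal metric $\tilde g=u^{4/(n-2)}g$ is also the right idea. The problem is that the central step of your argument is exactly the point you leave unproved. Writing $\mathcal{E}(g)=-\int_M\Delta_gR_g\,dV_g$ and expanding via the definition of $Q_g$ produces the term $-\frac{n^3-4n^2+16n-16}{4(n-1)(n-2)^2}\int_MR_g^2\,dV_g$, whose coefficient is negative in every dimension $n\geq3$, and this term cannot be absorbed pointwise into the Ricci term: Cauchy--Schwarz gives $R_g^2\leq n|\mathrm{Ric}_g|^2$, and one checks that $n(n^3-4n^2+16n-16)>16(n-1)^2$ already for $n=3,4$, so the manifestly non-negative pieces do not dominate the negative one. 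You propose to recover the deficit by testing $L_gu=0$ against $R_g$, $R_g/u$ or $\log u$ and then ``granting'' that a sum-of-squares identity results, but you never exhibit that identity, and producing it is essentially the whole content of the theorem; as written the proof has a hole at its load-bearing step. The rigidity conclusion inherits the same gap, since ``each non-negative piece vanishes'' presupposes the sum-of-squares form you have not produced; moreover your claim that $u\equiv1$ follows is unjustified (the correct conclusion is only that $\tilde g$ is Ricci-flat, hence flat, after which a separate biharmonic/maximum-principle argument is needed to kill the remaining conformal factor of $g$ over $\mathbb{R}^n$), and you do not address $n=4$, where the exponent $4/(n-4)$ degenerates and a logarithmic conformal factor is required.

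For comparison, the paper's proof avoids the $R_g^2$ term entirely. Positivity is obtained directly from the sphere-integral form $\mathcal{E}(g)=-\lim_{r\to\infty}\int_{S_1^{n-1}}\partial_rR_g\,r^{n-1}d\omega_{n-1}$: setting $h(r)=r^{n-2}\int_{S_1^{n-1}}R_g\,d\omega_{n-1}$, the decay bound $h(r)\leq C r^{n-4-\tau}$ is incompatible with the flux staying positive on an unbounded interval (that would force $h$ to grow like $r^{n-2}$), so one extracts a sequence of radii along which the flux is nonpositive. Rigidity is obtained by first arranging $R_{\tilde g}=0$, so that $Q_{\tilde g}=-\frac{2}{(n-2)^2}|\mathrm{Ric}_{\tilde g}|^2_{\tilde g}$ contains no scalar-curvature-squared term, and then integrating the Paneitz identity $P_{\tilde g}\Phi=\frac{n-4}{2}\Phi^{\frac{n+4}{n-4}}Q_g$ with $\Phi=u^{-\frac{n-4}{n-2}}$ over large domains; careful asymptotic estimates identify the boundary flux with a positive multiple of $\mathcal{E}(g)$, giving an identity in which every interior term is non-negative. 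That conformal preprocessing, rather than an algebraic absorption carried out in the original metric $g$, is the missing idea in your proposal.
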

\begin{proof}
Let us first notice that we need only work with the case $Q_g\geq 0$ and $R_g>0$, since in the remaining case the result follows from Proposition \ref{Maxprinciple.1}. Thus, in what follows we assume that $Q_g\geq 0$ and $R_g >0$.

\medskip
\noindent\textbf{Proof of positivity}

Let us start by noticing that under our hypotheses we have $R_g=O_2(r^{-\tau -2})$ with $\tau>\tau_n$. Then, the energy exists and is independent of the sequence of spheres used to compute it. Furthermore, these decaying conditions imply that (\ref{Static.2}) also holds. We can rewrite
\begin{align*}
\int_{S^{n-1}_1}\partial_r R_g r^{n-1}d\omega_{n-1}&=\frac{\partial}{\partial r}\left(r^{n-1}\int_{S^{n-1}_1} R_g d\omega_{n-1}\right) - (n-1)r^{n-2}\int_{S^{n-1}_1}R_g d\omega_{n-1}.
\end{align*}
Define 
\begin{align}
h(r)\doteq r^{n-2}\int_{S^{n-1}_1}R_g d\omega_{n-1}.
\end{align}
Then,
\begin{align}\label{PE.1}
\int_{S^{n-1}_1}\partial_r R_g r^{n-1}d\omega_{n-1}&=\frac{\partial}{\partial r}\left(r h(r)\right) - (n-1)h(r).
\end{align}

By assumption we have that $h>0$ and from our decreasing assumption that there is some $r_0$ sufficiently large, so that $R_g(x)\leq Cr^{ -\tau-2 }(x)$ for all $x\in M$ such that $r(x)>r_0$. In particular, this last condition implies that $h(r)\leq C\omega_{n-1} r^{-\tau +n -4}$ for all $r>r_0$.

Suppose that there is some $r^{*}> r_0$ such that (\ref{PE.1}) is positive at $r^{*}$. Then, by continuity, there is some interval $(r^{*}-\epsilon,r^{*}+\epsilon)$ so that
\begin{align*}
rh'(r)  > (n-2)h(r),
\end{align*}
which implies
\begin{align*}
\log\left(\frac{h(r)}{h(r^{*})}\right)>(n-2)\log\left(\frac{r}{r^{*}}\right), \text{ for any } r\in (r^{*},r^{*}+\epsilon).
\end{align*}
That is $h(r)>\frac{h(r^{*})}{{r^{*}}^{n-2}}r^{n-2}$ for all $r\in (r^{*},r^{*}+\epsilon)$. But notice that also $h(r)\leq C\omega_{n-1} r^{-\tau+n-4}$ for all such $r$, which shows that
\begin{align*}
\frac{h(r^{*})}{{r^{*}}^{n-2}}r^{n-2}<h(r)\leq C\omega_{n-1} r^{-\tau+n-4}\text{ for any } r\in (r^{*},r^{*}+\epsilon).
\end{align*}
Since $r^{*}$ and $C$ are fixed, the above inequalities are clearly falsified for large enough $r$, since $r^{n-2}$ grows faster that $r^{n-4-\tau}$. This, in turn, implies that the interval $I=[r^{*},r^{*}+\epsilon)$ where (\ref{PE.1}) is positive cannot be extended indefinitely, since that would force us into the above inequalities being valid for arbitrary large $r>r^{*}$, presenting a contradiction. Therefore, this implies that there must be some $\epsilon_{max}$ where (\ref{PE.1}) is non-positive at $r^{*}+\epsilon_{max}$. That is, there must be some $r_1>r^{*}$ where
\begin{align}
r_1^{n-1}\int_{S^{n-1}_1}\partial_r R_g(r_1,\theta)d\omega_{n-1}(\theta)&=\left(\frac{\partial}{\partial r}\left(r h(r)\right) - (n-1)h(r)\right)\vert_{r=r_1}\leq 0.
\end{align}
The idea is now to repeat this procedure so as to select a sequence of spheres $\{S^{n-1}_{r_i}\}_{i=1}^{\infty}$ along which (\ref{PE.1}) is manifestly non-positive. 
 The fact that such radii can be chosen under our hypotheses is a consequence of the above argument. Then
\begin{align*}
\mathcal{E}(g)_j= -r_j^{n-1}\int_{S_{r_j}}\partial_rR_g d\omega_{n-1}\geq 0 \text{ for all } j,
\end{align*}
which implies
\begin{align}\label{Positivenergy.1}
\mathcal{E}(g)=-\lim_{j\rightarrow\infty}\int_{S_{r_j}}\partial_rR_g r^{n-1}d\omega_{n-1}\geq 0.
\end{align}
\medskip
\noindent\textbf{Proof of rigidity}

Now, the idea is to analyze whether $\mathcal{E}(g)=0$ implies $g$-flatness. First we look for a conformal metric $\tilde{g}=u^{\frac{4}{n-2}} g$ such that $R_{\tilde{g}}\equiv 0$, that is,
\begin{align}\label{scalcurvdeformation}
L_g(u)=\Delta_{g}u-c_n R_{g}u=0,
\end{align}
where $c_n=\frac{n-2}{4(n-1)}$. Setting $u=1+\phi$, this is equivalent to solve
\begin{align}\label{scalcurvdeformation2}
    \Delta_{g}\phi-c_n R_{g}\phi= c_nR_g,
\end{align}
We know that $R_{g}=O_2(r^{-\tau-2})$, this implies that $R_{g}\in L^{p}_{-\delta-2}$ for any $\delta< \tau$ and any $p$. Thanks to corollary \ref{CLg}, we know  that $L_g:W^{2,p}_{\rho}\mapsto L^{p}_{\rho-2}$ is an isomorphism for $2-n <\rho<0$. Thus, choosing $p>n/2$, we find a (smooth) solution $\phi\in W^{2,p}_{-\delta}$ for any $0<\delta<\sigma\doteq \min\{\tau,n-2 \}$.\footnote{In the case $\tau>n-2$, we can achieve $\delta=\sigma$.}. By our choice of $p$, $\phi$ goes to zero at infinity, then, the fact that $R_g >0$ and the maximum principle, see lemma 2 and 4 \cite{Maxwell1}, insure that $u=1+\phi>0$.  
 
\medskip
\noindent\underline{I) \texorpdfstring{$n\not=4$}{TEXT}}

\medskip
Then, let $\Phi\doteq u^{-\frac{n-4}{n-2}}$; rewrite $g=\Phi^{\frac{4}{n-4}}\tilde{g}$ and notice that transformation rule for the Paneitz operator gives us
\begin{align*}
\frac{n-4}{2}\Phi^{\frac{n+4}{n-4}}Q_{g}=P_{\tilde{g}}\Phi.
\end{align*}
Also, since $R_{\tilde{g}}=0$, we have that $Q_{\tilde{g}}=-\frac{2}{(n-2)^2}|\mathrm{Ric}_{\tilde{g}}|^2_{\tilde{g}}$, and thus
\begin{align}
P_{\tilde{g}}\Phi=\Delta^2_{\tilde{g}}\Phi + \mathrm{div}_{\tilde{g}}\left(4S_{\tilde{g}}(\nabla\Phi,\cdot) \right) -\frac{(n-4)}{(n-2)^2}|\mathrm{Ric}_{\tilde{g}}|^2_{\tilde{g}}\Phi,
\end{align}
where, $S_{\tilde{g}}=\frac{1}{n-2}\mathrm{Ric}_{\tilde{g}}$ since $R_{\tilde{g}}=0$. Therefore, denoting by $D_r$ the bounded region in $M$ whose boundary is given by the sphere $S_r$ in the end of $M$, we see that
\begin{align}\label{Rigidity.1}
\int_{D_r}\frac{n-4}{2}\Phi^{\frac{n+4}{n-4}}Q_{g}+ \frac{(n-4)}{(n-2)^2}|\mathrm{Ric}_{\tilde{g}}|^2_{\tilde{g}}\Phi \, dv_{\tilde{g}} = \int_{S_r}\tilde{g}(\tilde{\nabla}\Delta_{\tilde{g}}\Phi,\tilde{\nu})d\omega_{\tilde{g}} + \frac{4}{n-2}\int_{S_r}\mathrm{Ric}_{\tilde{g}}(\nabla\Phi,\tilde{\nu}) d\omega_{\tilde{g}} 
\end{align} 
Let us now estimate the terms in the right-hand side. 

\medskip
\noindent \textbf{Estimates on $\mathrm{Ric}_{\tilde{g}}(\nabla\Phi,\tilde{\nu})$}

From the above analysis we find that $u=1+O_4(r^{-\delta})$ for any $\delta<\sigma=\min\{\tau,n-2\}$, and therefore
\begin{align*}
\tilde{g}_{ij}&=(1+O_4(r^{-\delta}))(\delta_{ij}+O_4(r^{-\tau})),\\
&=\delta_{ij}+ O_4(r^{-\delta}).
\end{align*}
Then, $\mathrm{Ric}_{\tilde{g}}=O_2(r^{-\delta-2})$ and also
\begin{align*}
\nabla\Phi=-\frac{n-4}{n-2}u^{-\frac{n-4}{n-2}-1}\nabla u=O_3(r^{-\delta-1}).
\end{align*}
Which implies
\begin{align*}
\mathrm{Ric}_{\tilde{g}}(\nabla\Phi,\tilde{\nu})&=\mathrm{Ric}_{\tilde{g}}(\nabla\Phi,\nu^{e}) + {\mathrm{Ric}_{\tilde{g}}}_{ij}\nabla^i\Phi (\nu^{e}-\tilde{\nu})^j=O_2(r^{-2\delta-3}),\\
\mathrm{Ric}_{\tilde{g}}(\nabla\Phi,\tilde{\nu})r^{n-1}&=O_2(r^{n-2\delta-4}).
\end{align*}
Since $\sigma > \frac{n}{2}-2$, then, we can choose $\delta$ such that
\begin{align*}
|\mathrm{Ric}_{\tilde{g}}(\nabla\Phi,\tilde{\nu})|r^{n-1} \rightarrow 0 \hbox{ as } r\rightarrow +\infty.
\end{align*}
\medskip
\noindent \textbf{Estimates on $\Delta_{\tilde{g}}\Phi$}

Let us rewrite
\begin{align*}
\tilde{\nabla}_i\Phi&=-\frac{n-4}{n-2}u^{-2\frac{n-3}{n-2}}\tilde{\nabla}_iu,\\
\tilde{\nabla}_j\tilde{\nabla}_i\Phi&=-\frac{n-4}{n-2}\left( u^{-2\frac{n-3}{n-2}}\tilde{\nabla}_j\tilde{\nabla}_iu - 2\frac{n-3}{n-2} u^{-\frac{3n-8}{n-2}}\tilde{\nabla}_ju\tilde{\nabla}_iu \right),
\end{align*}
from which we get
\begin{align*}
\Delta_{\tilde{g}}\Phi&= - \frac{n-4}{n-2}u^{-2\frac{n-3}{n-2}}\Delta_{\tilde{g}}u + 2\frac{(n-4)(n-3)}{(n-2)^2} u^{-\frac{3n-8}{n-2}}|\tilde{\nabla} u|^2_{\tilde{g}}.
\end{align*}
Now, we can compute that
\begin{align*}
\Delta_{\tilde{g}}u&=\tilde{g}^{ij}\tilde{\nabla}_j\nabla_iu=\tilde{g}^{ij}\left( \nabla_j\nabla_iu - (\Gamma^{k}_{ij}(\tilde{g}) - \Gamma^{k}_{ij}(g))\nabla_ku \right),\\
&=\Delta_{g}u + (\tilde{g}^{ij}-{g}^{ij})\nabla_j\nabla^{}_iu - \tilde{g}^{ij}(\Gamma^{k}_{ij}(\tilde{g}) - \Gamma^{k}_{ij}(g))\nabla_ku
\end{align*}
and we can estimate $\tilde{g}^{ij}-g^{ij}=O_4(r^{-\delta})-O_4(r^{-\tau})=O_4(r^{-\delta})$ and $\Gamma^{k}_{ij}(\tilde{g}) - \Gamma^{k}_{ij}(g)=O_3(r^{-\delta-1})-O_3(r^{-\tau-1})=O_3(r^{-\delta-1})$, since $\sigma=\min\{\tau,n-2\}$, and also $\nabla u=O_3(r^{-\delta-1})$. Therefore
\begin{align}\label{Rigidity.2}
\begin{split}
\tilde{g}^{ij}(\Gamma^{k}_{ij}(\tilde{g}) - \Gamma^{k}_{ij}(g))\nabla_ku&=O_3(r^{-2\delta-2}),\\
(\tilde{g}^{ij}-{g}^{ij})\nabla_j\nabla_iu&=O_2(r^{-2\delta-2}).
\end{split}
\end{align}
On the other hand, from (\ref{scalcurvdeformation}) we find that\footnote{Notice that, actually, $R_{g}=O_2(r^{-\tau-2})$ which is stronger than they the decay we are using, but for our purposes keeping just one weight parameter given by $\delta$ is enough to achieve the required estimates.}
\begin{align}\label{Rigidity.3}
\Delta_{g}u=c_nR_{g} + c_n\underbrace{R_{g}}_{O_2(r^{-\delta-2})}\underbrace{\phi}_{O_4(r^{-\delta})}=c_nR_{g} + O_2(r^{-2\delta-2}).
\end{align}
Putting together (\ref{Rigidity.2})-(\ref{Rigidity.3}), we finally find that
\begin{align}
\Delta_{\tilde{g}}u&=c_nR_{g} + O_2(r^{-2\delta-2}),
\end{align}
which, in turn, implies
\begin{align}
\begin{split}
\Delta_{\tilde{g}}\Phi&= - \frac{n-4}{4(n-1)}u^{-2\frac{n-3}{n-2}}R_{g} + O_2(r^{-2\delta-2}),\\
\nabla(\Delta_{\tilde{g}}\Phi)&=- \frac{n-4}{4(n-1)}u^{-2\frac{n-3}{n-2}}\nabla R_{g} + O_1(r^{-2\delta-3}).
\end{split}
\end{align}
Therefore
\begin{align*}
\tilde{g}(\tilde{\nabla}\Delta_{\tilde{g}}\Phi,\tilde{\nu})&=\tilde{g}(\tilde{\nabla}\Delta_{\tilde{g}}\Phi,\nu^{e}) + \tilde{g}(\tilde{\nabla}\Delta_{\tilde{g}}\Phi,\tilde{\nu}-\nu^e),\\
&=\nabla_i\Delta_{\tilde{g}}\Phi \frac{x^i}{r} + \underbrace{(\tilde{g}_{ij}-\delta_{ij})\nabla^i(\Delta_{\tilde{g}}\Phi)}_{O_1(r^{-2\delta-3})}\frac{x^j}{r} + \underbrace{\tilde{g}(\tilde{\nabla}\Delta_{\tilde{g}}\Phi,\tilde{\nu}-\nu^e)}_{O_1(r^{-2\delta-3})},\\
&=- \frac{n-4}{4(n-1)}u^{-2\frac{n-3}{n-2}}\nabla_i R_{g}\frac{x^i}{r} + O_1(r^{-2\delta-3}),
\end{align*}
implying
\begin{align*}
\tilde{g}(\tilde{\nabla}\Delta_{\tilde{g}}\Phi,\tilde{\nu})r^{n-1}&=- \frac{n-4}{4(n-1)}\underbrace{u^{-2\frac{n-3}{n-2}}}_{\xrightarrow[r\rightarrow\infty]{}1}\partial_r R_{g}r^{n-1} + \underbrace{O_1(r^{n-2\delta-4})}_{\xrightarrow[r\rightarrow\infty]{}0},
\end{align*}
where we have used, as above, that $\delta>\frac{n}{2}-2$, implying we can choose $\delta>0$ such that $n-2\delta-4<0$. From all this and $d\omega_{\tilde{g}}=(1+O(r^{-\delta}))d\omega_r$, we find that
\begin{align*}
\begin{split}
\lim_{r\rightarrow\infty}\int_{S^{n-1}_r}\tilde{g}(\tilde{\nabla}\Delta_{\tilde{g}}\Phi,\tilde{\nu})d\omega_{r}&=- \frac{n-4}{4(n-1)}\lim_{r\rightarrow\infty}\left(\int_{S^{n-1}_1}\partial_r R_{g}r^{n-1}d\omega_{n-1} + \int_{S^{n-1}_1}\underbrace{\partial_r R_{g}O(r^{-\delta})r^{n-1}}_{O(r^{n-2\delta-4})}d\omega_{n-1}\right),\\
&=\frac{n-4}{4(n-1)}\mathcal{E}(g),
\end{split}
\end{align*}
Putting together the above analysis, from (\ref{Rigidity.1}), we find that 
\begin{align}\label{YamabePEthm.1}
\int_{M}\frac{n-4}{2}\Phi^{\frac{n+4}{n-4}}Q_{g}+ \frac{(n-4)}{(n-2)^2}|\mathrm{Ric}_{\tilde{g}}|^2_{\tilde{g}}\Phi \, dv_{\tilde{g}} =\frac{n-4}{4(n-1)}\mathcal{E}(g).
\end{align}
Thus, if $\mathcal{E}(g)=0$, then $Q_g \equiv 0$ and $\mathrm{Ric}_{\tilde{g}}\equiv 0$, which implies, through results such as those exposed in the proof of Proposition \ref{Maxprinciple.1}  that $\tilde{g}$ is flat and $M\cong \mathbb{R}^n$. Therefore, since $\tilde{g}=u^{\frac{4}{n-2}}g$, then $g$ is conformally-flat.

\medskip
\noindent\underline{II) \texorpdfstring{$n= 4$}{TEXT}}


\medskip
Now, fix $\Phi=-\ln(u)$ and rewrite $\tilde{g}=e^{-2\Phi}g$. Then, using the transformation rule (\ref{Qcurvtrans4d.1}) we find
\begin{align}
0\leq Q_{g}=e^{-4\Phi}(P_{\tilde{g}}\Phi + Q_{\tilde{g}}).
\end{align}
Since $R_{\tilde{g}}=0$, then $Q_{\tilde{g}}=-\frac{1}{2}|\mathrm{Ric}_{\tilde{g}}|^2_{\tilde{g}}$, which implies
\begin{align}\label{4drigidity.1}
0\leq \int_{D_r}\left(Q_{g}e^{4\Phi} + \frac{1}{2}|\mathrm{Ric}_{\tilde{g}}|^2_{\tilde{g}}\right)dV_{\tilde{g}}=\int_{S_r}\tilde{g}(\tilde{\nabla}\Delta_{\tilde{g}}\Phi,\tilde{\nu})d\omega_{\tilde{g}}+2\int_{S_r}\mathrm{Ric}_{\tilde{g}}(\nabla\Phi,\tilde{\nu})d\omega_{\tilde{g}}
\end{align}

\medskip
\noindent\textbf{Estimates on $\mathrm{Ric}_{\tilde{g}}(\nabla\Phi,\tilde{\nu})$}

We clearly have $u=1+O_4(r^{-\delta})$ for some $\delta< \sigma$. Then, from $\nabla\Phi=-\frac{1}{u}\nabla u=O_3(r^{-\delta-1})$, we find that
\begin{align}
\begin{split}
\mathrm{Ric}_{\tilde{g}}(\nabla\Phi,\tilde{\nu})r^{3}&=\underbrace{\mathrm{Ric}_{\tilde{g}}(\nabla\Phi,\nu_e)}_{O_3(r^{-2\delta-3})}r^3 + \underbrace{\mathrm{Ric}_{\tilde{g}}(\nabla\Phi,\tilde{\nu}-\nu_e)}_{O_3(r^{-3\delta-3})}r^{3}=O_3(r^{-2\delta}).
\end{split}
\end{align}
Combining this with $d\omega_{\tilde{g}}=(1+O(r^{-\delta}))d\omega_r$ we get
\begin{align}\label{4Destimate.1}
\int_{S_r}\mathrm{Ric}_{\tilde{g}}(\nabla\Phi,\tilde{\nu})d\omega_{\tilde{g}}=o(1).
\end{align}

\medskip
\noindent\textbf{Estimates on $\tilde{g}(\nabla\Delta_{\tilde{g}}\Phi,\tilde{\nu})$}

Straightforwardly we see that
\begin{align}
\begin{split}
\tilde{\nabla}_j\tilde{\nabla}_i\Phi&=-u^{-1}\tilde{\nabla}_j\tilde{\nabla}_iu + u^{-2}\tilde{\nabla}_j u \tilde{\nabla}_i u=-u^{-1}\tilde{\nabla}_j\tilde{\nabla}_iu + O_3(r^{-2\delta - 2}),\\
\tilde{\nabla}_j\tilde{\nabla}_iu&=\nabla_j\nabla_iu - \Delta\Gamma^k_{ji}\nabla_ku=\nabla_j\nabla_iu + O_3(r^{-2\delta-2}),
\end{split}
\end{align}
where $\Delta\Gamma^k_{ji}=\Gamma^{k}_{ij}(\tilde{g})-\Gamma^{k}_{ij}(g)$. Thus, we see that
\begin{align*}
\Delta_{\tilde{g}}\Phi&=-u^{-1}\Delta_gu - u^{-1}\underbrace{(\tilde{g}^{ij}-g^{ij})\nabla_j\nabla_iu}_{O_{3}(r^{-2\delta-2})} + O_3(r^{-2\delta - 2})=-u^{-1}\Delta_gu + O_3(r^{-2\delta-2}),\\
&=-c_nR_g + O_3(r^{-2\delta-2}),
\end{align*}
thus,
\begin{align*}
\nabla\Delta_{\tilde{g}}\Phi&=-c_n\nabla R_g + O_2(r^{-2\delta-3}),\\
\tilde{g}(\nabla\Delta_{\tilde{g}}\Phi,\tilde{\nu})&=-c_n\tilde{g}(\nabla R_g,\nu_e) - c_n\underbrace{\tilde{g}(\nabla R_g,\tilde{\nu}-\nu_e)}_{O_1(r^{-2\delta-3})} + O_2(r^{-2\delta-3}),
\end{align*}
implying
\begin{align*}
\int_{S_r}\tilde{g}(\nabla\Delta_{\tilde{g}}\Phi,\tilde{\nu})d\omega_{\tilde{g}}&=-c_n\int_{S_r}\tilde{g}(\nabla R_g,\nu_e)d\omega_{\tilde{g}} + O(r^{-2\delta}),\\
&=-c_n\int_{S_r}\tilde{g}(\nabla R_g,\nu_e)d\omega_{r} - \int_{S_r}\underbrace{\tilde{g}(\nabla R_g,\nu_e)}_{O(r^{-\tau-3})}O(r^{-\delta})d\omega_{r}  + O(r^{-2\delta}),\\
&=-c_n\int_{S_r}\tilde{g}(\nabla R_g,\nu_e)d\omega_{r} + o(1)\\
&=-c_n\int_{S_r} g(\nabla R_g,\nu_e)d\omega_{r} -c_n\int_{S_r} (\underbrace{u^2-1}_{o(1)})\underbrace{g(\nabla R_g,\nu_e)}_{O(r^{-\delta-3)}}d\omega_{r} + o(1)\\
&=-c_n\int_{S_r} g(\nabla R_g,\nu_e)d\omega_{r} + o(1).
\end{align*}
Therefore, we find that
\begin{align}\label{4destimate.2}
\lim_{r\rightarrow\infty}\int_{S_r}\tilde{g}(\nabla\Delta_{\tilde{g}}\Phi,\tilde{\nu})d\omega_{\tilde{g}}&=c_n\mathcal{E}(g).
\end{align}
Finally, putting together (\ref{4drigidity.1}),(\ref{4Destimate.1}) and (\ref{4destimate.2}), we find that
\begin{align}\label{YamabePEthm.2}
0\leq \int_{M}\left(Q_{g}e^{4\Phi} + \frac{1}{2}|\mathrm{Ric}_{\tilde{g}}|^2_{\tilde{g}}\right)dV_{\tilde{g}}=c_n\mathcal{E}(g),
\end{align}
which implies that if $\mathcal{E}(g)=0$, then $\mathrm{Ric}_{\tilde{g}}\equiv 0$ and therefore $g$ is conformally-flat with $Q_g\equiv 0$.

\medskip
Finally, $g= v^\frac{4}{n-4} \delta$ (or $g=e^{2v}\delta$ if $n=4$), with $\Delta^2 v=0$ and $\displaystyle \lim_{r\rightarrow\infty} v=1$ (or $\displaystyle \lim_{r\rightarrow\infty} v=0$), hence, by maximum principle, we get that $v\equiv 1$ (or $v\equiv 0$) which achieves the proof of the theorem.
\end{proof}

It is worth noticing that, looking more carefully to the above proof, we can weaken the hypotheses of the previous theorem and still get interesting results. Indeed, in the positivity part, we only use the fact that the scalar curvature is positive at infinity, while for the rigidity statement, we only use the fact that we can make the scalar curvature flat via a conformal transformation, which is ensured by assuming that $Y([g])> 0$ by theorem 5.1 of \cite{Maxwell}, and that $Q_g$ is non-negative. Furthermore, under this last condition, equations (\ref{YamabePEthm.1}) and (\ref{YamabePEthm.2}) also provide proofs of positivity. All these observations give us the following results.

\begin{thm}\label{PEthm2}
Let $(M^n,g)$ be an $n$-dimensional AE manifold, with $n\geq 3$, which satisfies the decaying conditions i) and ii) of Proposition \ref{Static.1} and such that $R_g> 0$ on $M\setminus K$, for some compact set $K$ then the fourth order energy $\mathcal{E}(g)\geq 0$.
\end{thm}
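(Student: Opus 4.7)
The plan is to observe that the proof of Theorem \ref{PEthm} is built in two independent stages, and that the positivity stage never actually uses $R_g > 0$ globally; it uses only that $R_g$ is positive on spheres $S^{n-1}_r$ for $r$ large enough. Since the hypothesis $R_g > 0$ on $M \setminus K$ supplies precisely that, essentially the same argument should go through, with no rigidity claim to worry about.

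Concretely, I would first invoke Proposition \ref{Geometricmass} to write
\[
\mathcal{E}(g) = -\lim_{r\to\infty} \int_{S^{n-1}_1} \partial_r R_g \, r^{n-1} \, d\omega_{n-1},
\]
and note that Proposition \ref{Static.1} guarantees this limit is independent of the exhausting sequence of spheres. It therefore suffices to exhibit a single sequence of radii $r_j \to \infty$ along which the integrand is non-positive, because then the limit over that sequence, which equals $-\mathcal{E}(g)$, is $\leq 0$.

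Next I would set $h(r) \doteq r^{n-2} \int_{S^{n-1}_1} R_g \, d\omega_{n-1}$, which is strictly positive once $r > r_0$ with $r_0$ chosen so that $S^{n-1}_r \subset M \setminus K$, and rewrite
\[
\int_{S^{n-1}_1} \partial_r R_g \, r^{n-1} \, d\omega_{n-1} = \frac{\partial}{\partial r}\bigl(r h(r)\bigr) - (n-1) h(r).
\]
The decay $g_{ij} - \delta_{ij} = O_4(r^{-\tau})$ yields $R_g = O_2(r^{-\tau-2})$ and hence a pointwise decay bound $h(r) \leq C\,\omega_{n-1}\, r^{\,n - 4 - \tau}$; since $\tau > \tau_n$, the exponent $n-4-\tau$ is strictly less than $n-2$, so $h(r)/r^{n-2} \to 0$ as $r\to\infty$.

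Finally, I would run the contradiction argument exactly as in the positivity half of Theorem \ref{PEthm}: if at some $r^{*} > r_0$ the bracket $(rh)' - (n-1)h$ were strictly positive, then on a maximal interval of positivity the differential inequality $h'/h > (n-2)/r$ propagates, forcing $h(r) \gtrsim r^{n-2}$, which contradicts the decay bound $h(r) \leq C\, r^{n-4-\tau}$ for sufficiently large $r$. Therefore for every large $r^{*}$ there is a still larger $r_1$ at which the bracket is $\leq 0$; iterating produces the desired sequence $r_j \to \infty$, and passing to the limit gives $\mathcal{E}(g) \geq 0$. I do not expect any substantive obstacle here: the hardest ingredient, namely that $h$ is both eventually positive and must decay faster than $r^{n-2}$, is supplied verbatim by the hypotheses, and the rigidity part of Theorem \ref{PEthm} (which is where the global assumptions $Q_g \geq 0$ and $R_g > 0$ really enter) is simply not needed for the non-negativity statement.
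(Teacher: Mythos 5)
Your proposal is correct and is essentially the paper's own argument: the authors prove Theorem \ref{PEthm2} precisely by observing that the positivity half of the proof of Theorem \ref{PEthm} (the rewriting via $h(r)=r^{n-2}\int_{S^{n-1}_1}R_g\,d\omega_{n-1}$, the decay bound $h(r)\leq C\omega_{n-1}r^{n-4-\tau}$, and the contradiction argument producing radii $r_j\to\infty$ where $(rh)'-(n-1)h\leq 0$) only uses positivity of $R_g$ near infinity, which is exactly what $R_g>0$ on $M\setminus K$ provides. No gaps.
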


\begin{thm}\label{PEthm3}
Let $(M^n,g)$ be an $n$-dimensional AE manifold, with $n\geq 3$, which satisfies the decaying conditions i) and ii) of Proposition \ref{Static.1} and such that $Q_g\geq 0$ and $Y([g])> 0$, then  $\mathcal{E}(g)\geq 0$ with equality holding if and only if $(M,g)$ is isometric to $(\mathbb{R}^n,\cdot)$.
\end{thm}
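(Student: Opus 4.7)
The plan is to reduce the statement to the machinery already developed in the proof of Theorem \ref{PEthm}, replacing the pointwise positivity $R_g>0$ used there by the global Yamabe positivity hypothesis. The only place $R_g>0$ entered the earlier argument was to produce a positive solution of the linear conformal Laplacian equation $L_g u = 0$ with $u\to 1$ at infinity. Here I would replace this step by invoking Theorem 5.1 of \cite{Maxwell}: under the AE decay of $g$ and $Y([g])>0$, there exists a smooth positive $u$ with $L_g u=0$ and $u-1\in W^{2,p}_{-\delta}$ for any $\delta\in(0,\sigma)$, where $\sigma=\min\{\tau,n-2\}$. This is the one new input.

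Once this conformal factor is in hand, I would set $\tilde g=u^{4/(n-2)}g$ (or $\tilde g=e^{-2\Phi}g$ with $\Phi=-\ln u$ when $n=4$), obtaining a scalar-flat AE metric with the same decay rate $\delta>\tau_n=(n-4)/2$. The entire asymptotic analysis in the proof of Theorem \ref{PEthm} — the expansions of $\mathrm{Ric}_{\tilde g}$, $\nabla\Phi$, $\Delta_{\tilde g}\Phi$, $\nabla\Delta_{\tilde g}\Phi$, the boundary integrals on $S_r$, and the use of the Paneitz conformal transformation law — then goes through unchanged. This yields the identities
\begin{equation*}
\int_M\!\Bigl[\tfrac{n-4}{2}\Phi^{(n+4)/(n-4)}Q_g+\tfrac{n-4}{(n-2)^2}|\mathrm{Ric}_{\tilde g}|_{\tilde g}^2\Phi\Bigr]dV_{\tilde g}=\tfrac{n-4}{4(n-1)}\mathcal{E}(g)\qquad(n\neq 4),
\end{equation*}
\begin{equation*}
\int_M\!\Bigl[Q_g e^{4\Phi}+\tfrac{1}{2}|\mathrm{Ric}_{\tilde g}|_{\tilde g}^2\Bigr]dV_{\tilde g}=c_n\mathcal{E}(g)\qquad(n=4).
\end{equation*}
Since $u>0$ (so $\Phi>0$ in the first case) and $Q_g\geq 0$ by hypothesis, the left-hand sides are manifestly non-negative, and the factor $(n-4)$ in front of $\mathcal{E}(g)$ is positive for $n\geq 5$. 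The cases $n=3,4$ are immediate from the remark following Theorem \ref{PETHM} (in these dimensions $\mathcal{E}(g)=0$ automatically), so we obtain $\mathcal{E}(g)\geq 0$ in all dimensions.

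For the rigidity statement, if $\mathcal{E}(g)=0$ then both non-negative terms on the left-hand side of the identity above must vanish identically. In particular $\mathrm{Ric}_{\tilde g}\equiv 0$, so $\tilde g$ is an AE Ricci-flat metric; the Bishop–Gromov argument already used in Proposition \ref{Maxprinciple.1} forces $(M,\tilde g)$ to be isometric to $(\mathbb{R}^n,\delta)$. Hence $g$ is globally conformally flat on $\mathbb{R}^n$, of the form $g=u^{4/(n-2)}\delta$ (respectively $g=e^{2v}\delta$ when $n=4$), and the vanishing of the first integrand forces $Q_g\equiv 0$. On a conformally flat background this collapses the $Q$-curvature equation to $\Delta_\delta^2 u=0$ (respectively $\Delta_\delta^2 v=0$) with $u\to 1$ (respectively $v\to 0$) at infinity, and the AE decay together with the maximum principle forces $u\equiv 1$ (respectively $v\equiv 0$). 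Thus $g=\delta$.

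The step that I expect to require the most care is the quantitative control of the conformal factor $u$ produced by Maxwell's theorem: one must verify that its decay exponent $\delta$ can be chosen in the range $(n/2-2,\sigma)$ so that every asymptotic estimate of the form $r^{n-2\delta-4}\to 0$ in the proof of Theorem \ref{PEthm} remains valid. This is where the sharpness of the hypothesis $\tau>\tau_n$ manifests itself, but it is not a genuine obstacle — it is exactly the condition engineered in Proposition \ref{Static.1} to make $\mathcal{E}(g)$ a well-defined geometric invariant.
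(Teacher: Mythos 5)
Your proposal is correct and follows essentially the same route as the paper: the paper proves Theorem \ref{PEthm3} precisely by observing that the proof of Theorem \ref{PEthm} only needs a positive solution of $L_g u=0$ with $u\to 1$, which Maxwell's Theorem 5.1 supplies under $Y([g])>0$, and then reading positivity and rigidity off the identities (\ref{YamabePEthm.1}) and (\ref{YamabePEthm.2}) exactly as you do, including the final biharmonic/maximum-principle step. The only quibble is notational: in the rigidity endgame the conformal factor satisfying $\Delta_\delta^2 v=0$ is the one with exponent $4/(n-4)$ (i.e.\ $g=v^{4/(n-4)}\delta$, as in the paper), not $4/(n-2)$, since it is the Paneitz transformation law, not the conformal-Laplacian one, that collapses to the bilaplacian when $Q_g\equiv 0$.
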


Let us now notice that the $Q$-curvature assumption in Theorem \ref{PEthm3} cannot be weakened while keeping the rest of the hypotheses. This can be seen as follows: Consider $(\mathbb{R}^n,\delta)$, with $n\geq 3$, and let $\{h_k\}_{k=1}^{\infty}\in C^{\infty}(U)$, be a sequence of smooth compactly supported symmetric second rank tensor fields, $\bar{U}\subset\subset M$, with $||h_k||<\epsilon$, for some fixed small $\epsilon>0$, such that $h_k\xrightarrow[k\rightarrow\infty]{} 0$. Then, consider the sequence of metrics $g_k=\delta+h_k$, where the construction of the $h_k$ can be made so that $g_k$ are not conformally flat. Furthermore, this construction can be fit so that $\{g_k\}_{k=1}^{\infty}$ are all Yamabe positive.\footnote{For an explicit construction of this see Example 1 in \cite{Avalos-Lira}} Therefore, there are conformal factors $\{u_k=1+\varphi_k\}$, with $\varphi_k\in H^s_{-\delta}$, with $\delta<n-2$ arbitrary, such that $\Delta_{g_k}u_k-R_{g_k}u_k=0$.\footnote{In fact, from Theorem 1.17 in \cite{Bartnik}, it follows that
\begin{align*}
u_{k}=1+\frac{a_k}{r^{n-2}} + o(r^{-(n-2)}),
\end{align*}
where $a_k$ are constants.} This means that $R_{u_k^{\frac{4}{n-2}}g_k}=0$ and thus $Q_{u_k^{\frac{4}{n-2}}g_k}=-c_2(n)|\mathrm{Ric}_{u_k^{\frac{4}{n-2}}g_k}|^2_{u_k^{\frac{4}{n-2}}g_k}<0$. Furthermore, all the metrics $u_k^{\frac{4}{n-2}}g_k$ satisfy the decaying conditions of Proposition \ref{Static.1} and are, by construction, Yamabe positive. Nevertheless, from their asymptotics $u_k^{\frac{4}{n-2}}{g_k}_{ij}=(1+\frac{a_{k,n}}{r^{n-2}})\delta_{ij} + o_k(r^{-(n-2)})$ we see that $\mathcal{E}(g_k)=0$ for all $k$, although none of the $g_k$ are conformally flat. Finally, by considering $k$ sufficiently large, we can make $-\frac{1}{k}<Q_{u^{\frac{4}{n-2}}_kg_k}<0$, which shows that there are Yamabe positive AE metrics satisfying conditions (i)-(ii) of Proposition \ref{Static.1} which have negative $Q$-curvature which is as small as we want, zero energy and are not conformally flat.


\bigskip


In dimensions $n=3,4$, under the above hypotheses, we fall into a curious situation, since any such $n$-dimensional AE-manifold has $\mathcal{E}(g)=0$ as a consequence of the fall-off conditions, since  $ \nabla R_g= 0(r^{-\tau-3})$ decreases faster than the volume of the $S_r^{n-1}$.


\begin{coro}\label{PEthm4d}
Any n-dimensional AE-Riemannian manifold $(M^n,g)$, with $n\in\{3,4\}$, such that $Q_g\geq 0$ and $Y([g])> 0$, is isometric to $(\R^n,\delta)$.
\end{coro}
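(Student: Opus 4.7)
The plan is to derive this corollary directly from Theorem \ref{PEthm3} by checking that in dimensions $n\in\{3,4\}$ the hypotheses of Proposition \ref{Static.1} are automatic, and that the energy $\mathcal{E}(g)$ then vanishes for purely dimensional reasons. Once $\mathcal{E}(g)=0$, the rigidity half of Theorem \ref{PEthm3} immediately yields isometry with $(\mathbb{R}^n,\delta)$.

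First I would verify condition (i) of Proposition \ref{Static.1}. Since $\tau_n=0$ for $n\in\{3,4\}$, any AE manifold of decay rate $\tau>0$ (which is part of the very definition of AE) trivially satisfies $\tau>\tau_n$, so (i) holds. Next I would check condition (ii), i.e.\ $Q_g\in L^1(M,dV_g)$. Under the AE decay one has $R_g, \Ric_g=O_2(r^{-\tau-2})$, so the quadratic terms $|\Ric_g|_g^2$ and $R_g^2$ appearing in the definition of $Q_g$ are $O(r^{-2\tau-4})$, integrable at infinity whenever $2\tau+4>n$, which is automatic for $n\leq 4$ and $\tau>0$. The remaining term $\Delta_g R_g=O(r^{-\tau-4})$ is integrable whenever $\tau+4>n$, i.e.\ $\tau>n-4$, again automatic for $n\in\{3,4\}$. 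Hence $Q_g\in L^1(M,dV_g)$ without any additional hypothesis.

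Having secured the hypotheses of Proposition \ref{Static.1}, I would next invoke Proposition \ref{Geometricmass} to rewrite
\begin{align*}
\mathcal{E}(g)=-\lim_{r\to\infty}\int_{S_1^{n-1}}\partial_r R_g\, r^{n-1}\,d\omega_{n-1}.
\end{align*}
Pointwise on the unit sphere, the integrand is of size $O(r^{-\tau-3})\cdot r^{n-1}=O(r^{n-\tau-4})$, which for $n\in\{3,4\}$ gives $O(r^{-\tau-1})$ or $O(r^{-\tau})$ respectively, tending uniformly to zero as $r\to\infty$. Since $S_1^{n-1}$ has finite measure, the integral itself is $O(r^{n-\tau-4})\to 0$, so $\mathcal{E}(g)=0$.

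Finally, because $(M,g)$ satisfies the decay conditions (i)--(ii), has $Q_g\geq 0$ and $Y([g])>0$, Theorem \ref{PEthm3} applies: the vanishing $\mathcal{E}(g)=0$ forces $(M,g)$ to be isometric to $(\mathbb{R}^n,\delta)$. There is essentially no obstacle beyond the bookkeeping of decay rates; the only mildly delicate point is confirming that AE manifolds of arbitrary (positive) decay rate in dimensions $3$ and $4$ automatically satisfy the $L^1$ integrability of $Q_g$, and this reduces to the inequality $\tau>n-4$ already built into the AE definition in these dimensions.
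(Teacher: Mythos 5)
Your proposal is correct and follows essentially the same route as the paper: the authors likewise observe that for $n\in\{3,4\}$ the hypotheses of Proposition~\ref{Static.1} are automatic and that $\partial_r R_g=O(r^{-\tau-3})$ decays faster than the volume growth $r^{n-1}$, forcing $\mathcal{E}(g)=0$, after which the rigidity case of Theorem~\ref{PEthm3} gives the isometry with $(\mathbb{R}^n,\delta)$. Your write-up merely makes the decay-rate bookkeeping explicit, which the paper leaves as a one-line remark.
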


\subsection*{Rigidity in critical cases}

The aim of this section is to comment further on the especial case that occurs in dimension four. In particular, the fact that the energy $\mathcal{E}(g)$ is always zero has some topological consequences as a corollary. It is worth to contrast this with the second order case associated to the ADM energy. In this last case, the critical case is given in dimension two, where asymptotic flatness is too strong a condition in order to detect any meaningful information concerning the ADM energy (for details, see Chapter 3 in \cite{Lee}). Let us start by briefly commenting on this case as a warm-up.

Let $(M,g)$ be an asymptotically flat surface with $\tau>0$ and non-negative integrable Gaussian curvature. Let us consider a chart a infinity, and recall the definition of the ADM energy
$$m(g)=\lim_{R\rightarrow +\infty} \int_{\partial B_R}
(\partial_ig_{ij}-\partial_jg_{ii})\nu^j \, ds,$$
where $\nu$ is the outer normal. Because of the decreasing assumptions, we easily see that the mass must vanish. Let us write the Gauss-Bonnet formula which gives
$$2\chi(M)=\int_{M\setminus B(0,R)^c} K \, d\sigma + \int_{\partial B(0,R)} k_g \, ds .$$
Here we write $\chi(M)$ instead of $\chi(M\setminus B(0,R)^c)$ since they are equal for $R$ large. Then in polar coordinate we have,
\begin{align*}
k_g&=-\frac{1}{r^2}g\left(\nabla_{\partial_\theta} \partial_\theta,\partial_r\right)=-\frac{1}{r^2}\Gamma^{r}_{\theta\theta}(g) + O\left(\frac{1}{r^{1+\tau}}\right)=-\frac{1}{r^2}\Gamma^{r}_{\theta\theta}(\delta) + O\left(\frac{1}{r^{1+\tau}}\right) \\
&=\frac{1}{r} + O\left(\frac{1}{r^{1+\tau}}\right),
\end{align*}
hence passing to the limit we obtain that 
$$2\pi \chi(M)-2\pi =\int_M K\, d\sigma.$$
But we know that 
$$\chi(M)=2-2g-r,$$
where $g$ is the genus of $M$ and $r$ its number of ends. Then we necessary get that $g=0$, $r=1$ and $K\equiv0$. Hence we recover the following well-known proposition, see chapter 3 \cite{Lee},
\begin{prop}
    Let $(M,g)$ an asymptotically flat surface with non-negative Gauss curvature. Then, $(M,g)$ is the Euclidean plane.
\end{prop}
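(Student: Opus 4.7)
The approach follows the Gauss--Bonnet strategy already sketched in the paragraphs preceding the statement. For $R$ sufficiently large, let $\Omega_R \doteq M \setminus \phi^{-1}(\mathbb{R}^2 \setminus \overline{B(0,R)})$, a compact surface with boundary diffeomorphic to $S^1$. The classical Gauss--Bonnet formula gives
\[
2\pi\,\chi(\Omega_R) \;=\; \int_{\Omega_R} K\, d\sigma \;+\; \int_{\partial\Omega_R} k_g\, ds,
\]
and since excising the end does not alter the Euler characteristic once $R$ is beyond the compact core, $\chi(\Omega_R) = \chi(M)$ is independent of $R$.

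The computational step is the asymptotic expansion of the geodesic curvature on the coordinate circle $\{|x| = R\}$, done exactly as in the sketch: using $g_{ij}-\delta_{ij} = O_2(r^{-\tau})$ together with the polar-coordinate formula for $k_g$, one finds $k_g = 1/r + O(r^{-1-\tau})$, while the $g$-length element satisfies $ds = (1 + O(r^{-\tau}))\, r\, d\theta$. Together this yields $\int_{\partial\Omega_R} k_g\, ds \to 2\pi$ as $R\to\infty$. Combined with the hypothesis that $K$ is integrable, passing to the limit gives
\[
\int_M K\, d\sigma \;=\; 2\pi\bigl(\chi(M) - 1\bigr).
\]

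Since $K \geq 0$, this forces $\chi(M) \geq 1$; on the other hand, $\chi(M) = 2 - 2g - r$, and asymptotic flatness with a single end forces $r \geq 1$, so $\chi(M) \leq 1$. Hence equality holds throughout: $g = 0$, $r = 1$, and $\int_M K\, d\sigma = 0$, which together with the sign condition forces $K \equiv 0$. A complete, simply connected flat surface with one Euclidean end is isometric to $(\mathbb{R}^2,\delta)$; indeed, by Cartan--Hadamard its Riemannian universal cover is $(\mathbb{R}^2,\delta)$ and, being simply connected, $M$ coincides with its cover. The main technical point, modest in this two-dimensional setting, is the asymptotic control of $k_g$ and of the arclength element, both of which are elementary consequences of the decay $g_{ij} - \delta_{ij} = O_2(r^{-\tau})$; the rest of the argument is purely topological, combining the sign of $K$ with the end/genus decomposition of the Euler characteristic.
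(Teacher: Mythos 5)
Your proof is correct and follows essentially the same route as the paper: Gauss--Bonnet on the compact core, the asymptotic expansion $k_g = 1/r + O(r^{-1-\tau})$ of the geodesic curvature of the coordinate circles giving $\int_{\partial\Omega_R} k_g\,ds \to 2\pi$, and then the identity $\int_M K\,d\sigma = 2\pi(\chi(M)-1)$ combined with $\chi(M)=2-2g-r$ and $K\geq 0$ to force $g=0$, $r=1$, $K\equiv 0$. The only addition is that you make explicit the final step (complete, simply connected, flat implies isometric to $(\mathbb{R}^2,\delta)$), which the paper leaves implicit.
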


Let us turn to the $4$-dimensional case. Let $(M,g)$ be a 4-dimensional AE manifold with $\tau >0$ with only one end such that $\vert W_g\vert^2$ is integrable. From Corollary \ref{PEthm4d}, we know that the mass is necessarily vanishing
\begin{align}
    \mathcal{E}(g)=-\lim_{R\rightarrow\infty}\int_{S_{R}}\partial_rR_g r^{3}d\omega_{3}=0.
\end{align}

Along the lines of the $2$-dimensional case, let us try to get a similar proof of the rigidity result. First, we apply  the Gauss-Bonnet-Chern formula with boundary (see \cite{Chen} for instance)
$$32\pi^2 \chi(M) =\int_{K_R} \vert W\vert^2 +16\sigma_2(S_g)\, dv + 8\int_{\partial B(0,R)} B_g \, dv$$
where $K_R=\left(M\setminus \overline{B(0,R)}\right)^c$, $S_g=\frac{1}{2} \left( \mathrm{Ric} -\frac{1}{6} R_g g\right)$ is the Schouten tensor and $B_g=\frac{1}{2} R_gH-\mathrm{Ric}(\nu,\nu)H-R_{\gamma \alpha \beta \gamma } \mathrm{II}^{\alpha\beta} +\frac{1}{3} H^3 -H\vert \mathrm{II}\vert^2 +\frac{2}{3} tr(\mathrm{II}^3)$, where $\mathrm{II}$ and $H$ are respectively the second form and the mean curvature of a hypersurface $\Sigma\hookrightarrow M$, $H=\mathrm{tr}_{\Sigma}\mathrm{II}$, taken with respect to the inner unit normal to $\Sigma$, the Greek indices are indices tangent to $\Sigma$, and $\mathrm{II}^3$ is defined in local coordinates by ${\mathrm{II}^3}^i_j=\mathrm{II}^i_k\mathrm{II}^k_l\mathrm{II}^l_j$.
Moreover, in dimension $4$, we have
$$Q_g =-\frac{1}{6} \Delta_g R_g +4 \sigma_2(S_g),$$
and thanks to our decreasing assumption, we get 
$$B_g= \frac{2}{r^3} +O\left( \frac{1}{r^{3+\tau}}\right)$$
which gives, using once more our decreasing assumption, that

\begin{align}
\begin{split}
    32\pi^2 (\chi(M)-1)&= \int_{K_R} \vert W\vert^2 + 4 Q_g +\frac{2}{3} \Delta_g R_g \, dv_g +O\left( \frac{1}{R^\tau}\right)\\
    &= \int_{K_R} \vert W\vert^2 +4 Q_g  \, dv_g +O\left( \frac{1}{R^\tau}\right)
\end{split}
\end{align}
Hence passing to the limit, we get

$$32\pi^2 (\chi(M)-1)= \int_{M} \vert W\vert^2 + 4 Q_g  \, dv_g $$
Let us set 
$$\kappa_g=\int_M Q_g \, dv_g  .$$
One can use the above expression to recover Corollary \ref{PEthm4d} in the four dimensional case as consequence of the conformal invariance of $\kappa_g$. Also, we can easily deduce the following curvature-topology proposition. This proposition is not really new but we just would like to put them in perspective with our rigidity result

\begin{prop}\label{propc}
    Let $(M,g)$ be an AE 4-manifold with $\kappa_g \geq0$, then $\chi(M) \geq 1$ with equality if $(M,g)$ is locally conformally flat with $\kappa_g= 0$.\\
    In particular, if $\kappa_g \geq 0$ and $Y([g])> 0$ or the second betti number vanishes, then $\kappa_g= 0$ and $(M,g)$ is conformal to the euclidean $\R^4$.
\end{prop}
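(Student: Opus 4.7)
The first claim reads off directly from the Chern--Gauss--Bonnet identity
\[
32\pi^2(\chi(M)-1) = \int_M |W|^2\,dv_g + 4\kappa_g
\]
established in the paragraph above. Under the hypothesis $\kappa_g\geq 0$ both summands on the right are non-negative, so $\chi(M)\geq 1$, and equality forces $|W|^2\equiv 0$ (i.e.\ $g$ is locally conformally flat) together with $\kappa_g=0$.

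For the ``in particular'' clause under $Y([g])>0$, my plan is to recycle the $n=4$ rigidity computation inside the proof of Theorem \ref{PEthm}. Yamabe positivity supplies, via Theorem 5.1 of \cite{Maxwell}, a positive conformal factor $u\to 1$ at infinity with $\tilde g = u^2 g$ scalar-flat. Setting $\Phi=-\ln u$, the identity $Q_g = e^{-4\Phi}(P_{\tilde g}\Phi + Q_{\tilde g})$, combined with $Q_{\tilde g}=-\tfrac{1}{2}|\mathrm{Ric}_{\tilde g}|^2$ and the boundary estimates derived in that proof, yields after integrating on the exhausting domains $D_r$ and sending $r\to\infty$ the identity
\[
\kappa_g + \frac{1}{2}\int_M |\mathrm{Ric}_{\tilde g}|^2\,dv_{\tilde g} = \frac{1}{6}\,\mathcal{E}(g).
\]
Crucially those estimates relied only on the asymptotics of $u$ and not on the sign of $Q_g$. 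Since $\mathcal{E}(g)=0$ in dimension four by Corollary \ref{PEthm4d} (equivalently, the decay conditions alone make the boundary integral in \eqref{Static.2} vanish), both non-negative terms on the left must vanish, giving $\kappa_g=0$ and $\mathrm{Ric}_{\tilde g}\equiv 0$. The Bishop--Gromov equality argument from Proposition \ref{Maxprinciple.1} then promotes Ricci-flatness to flatness on this AE background, so $\tilde g$ is isometric to the Euclidean metric on $\R^4$ and $g$ is conformally Euclidean.

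For the case $b_2(M)=0$, I would proceed topologically. An AE 4-manifold with one end is homotopy equivalent to a compact oriented 4-manifold $M_0$ with $\partial M_0 \cong S^3$, whence $\chi(M)=\chi(M_0) = 1-b_1(M_0)+b_2(M_0)-b_3(M_0)$. Combining $b_2=0$ with the already established $\chi(M)\geq 1$ forces $b_1+b_3\leq 0$, hence $b_1=b_2=b_3=0$ and $\chi(M)=1$. The equality case of the first part then makes $g$ locally conformally flat with $\kappa_g=0$, and the vanishing of $b_1$ together with the AE asymptotics allows the developing map to identify $(M,g)$ conformally with $(\R^4,\delta)$. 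The main obstacle I anticipate is the careful bookkeeping in Case (a): one must confirm that the full chain of boundary estimates in the $n=4$ analysis of Theorem \ref{PEthm} remains valid \emph{without} invoking the pointwise sign of $Q_g$, so that the sign-free integral identity displayed above still holds. The topological rigidity step in Case (b) is standard, but the passage from ``locally conformally flat, $\chi=1$, $b_1=0$'' to ``globally conformal to flat $\R^4$'' via a developing-map argument also warrants a careful statement.
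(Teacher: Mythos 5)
Your first paragraph and the $Y([g])>0$ branch are correct and follow the route the paper intends: the text preceding the proposition says the conclusion is recovered ``as a consequence of the conformal invariance of $\kappa_g$'', and your integration-by-parts identity is precisely the honest implementation of that invariance on a noncompact background. Your worry about whether the $n=4$ boundary estimates in the proof of Theorem \ref{PEthm} survive without the pointwise sign of $Q_g$ resolves favourably: those estimates use only the asymptotics $u=1+O_4(r^{-\delta})$ of the scalar-flattening conformal factor (supplied under $Y([g])>0$ by theorem 5.1 of \cite{Maxwell}) and the decay of $R_g$; the sign of $Q_g$ enters (\ref{4drigidity.1}) only through the inequality on the far left, not through the equality. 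Since $dV_{\tilde g}=e^{-4\Phi}dV_g$, the term $\int_{D_r} Q_g e^{4\Phi}\,dV_{\tilde g}$ is literally $\int_{D_r}Q_g\,dV_g\to\kappa_g$ (finite because $Q_g\in L^1$ is needed for $\kappa_g$ to exist at all), the Ricci term is monotone, and $\mathcal{E}(g)=0$ by the four-dimensional fall-off, so (\ref{YamabePEthm.2}) becomes $\kappa_g+\tfrac12\int_M|\mathrm{Ric}_{\tilde g}|^2\,dV_{\tilde g}=\tfrac16\,\mathcal{E}(g)=0$; the passage from Ricci-flatness to flatness is then the Bishop--Gromov argument of Proposition \ref{Maxprinciple.1}. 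This is complete.

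The $b_2=0$ branch has a genuine gap at its final step. The homological bookkeeping is fine: $\chi(M)=1-b_1+b_2-b_3\le 1+b_2$, so $b_2=0$ forces equality in Chern--Gauss--Bonnet, hence $W\equiv0$, $\kappa_g=0$ and $b_1=b_3=0$. But ``locally conformally flat, $b_1=0$, AE'' does not by itself yield a global conformal identification with $(\R^4,\delta)$ via the developing map: $b_1=0$ only makes $H_1(M;\mathbb{Z})$ finite and controls nothing about $\pi_1(M)$, and injectivity of the developing map of a complete locally conformally flat manifold fails in general without extra hypotheses (the Schoen--Yau injectivity theorem requires, e.g., nonnegative scalar curvature or a positive conformal invariant, neither of which is available in this branch, where $Y([g])>0$ is not assumed). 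As written this step is an assertion rather than a proof; you flag it yourself, correctly. You would need to supply the missing conformal-rigidity input here --- the paper offers no proof of this case either (it is dismissed as ``easily deduced''), so the burden is real and not merely expository.
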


\subsection{The Q-curvature positive mass theorem}

In this section we will make contact with a series of recent results associated with the positive mass theorem for the Paneitz operator, namely \cite{Malchiodi1,hangyang2,Raulot}. In particular, we will show that these results follow from Theorem \ref{PEthm3}. 

Let us consider a closed manifold $(M^n,g)$ with $n\geq 5$. If, the Panietz operator is coercive, that is to say 
\[ \inf_{u\in H^2(M)\, ;\, \Vert u\Vert_2=1} \int_M P_g(u)u \, dv_g >0,\]
then it admits a Green functions $G_{P_g}$. This is in particular guaranteed if $R_g \geq 0$ and  $Q_g$ is semi-positive, see proposition B of \cite{Malchiodi1}, or more generally by the fact that 
\[ Y_4([g])= \inf_{u\in H^2(M)\, ;\, \Vert u\Vert_{2^\#}=1} \int_M P_g(u)u \, dv_g >0,\]
where $2^\#=\frac{2n}{n-4}$. This last infimum has the advantage to be conformally invariant and it plays a similar role to the Yamabe invariant for the $Q$-curvature. Through this section, we will assume that $Y_4([g])>0$ and therefore $G_{P_g}$ exists for every $g\in [g]$.

Nevertheless, contrary to the conformal Laplacian, nothing here guarantees that $G_{P_{g}}$ is positive. This will be one of our assumptions, which is in particular satisfied if $Y([g])\geq 0$ and $Q_{\tilde{g}}$ is semi-positive for some conformal metric $\tilde{g}$, due to lemma 3.2 in \cite{hangyang1}.
 
\begin{remark}
\label{remG} 
In fact Hang and Yang assume $Y([g])>0$, but if $Y([g])=0$ and there is some $\tilde{g}\in [g]$ with $Q_{\tilde{g}}$ semi-positive, then there exists $\bar{g}=u^{\frac{4}{n-2}}\tilde{g}$ such that $R_{\bar{g}}=0$ which implies that $Q_{\bar{g}}\leq 0$ which contradicts the fact that
\[\int_M Q_{\tilde{g}} v\, dv_{\tilde{g}} =\int_M v^\frac{n+4}{n-4} Q_{\bar{g}} \, dv_{\tilde{g}} ,\]
with $v =u^{\frac{n-4}{n-2}}$. Hence the existence of a conformal metric with semi-positive $Q$-curvature forces the Yamabe invariant to be positive.

In fact, if $Y([g])\geq 0$ , thanks to, theorem 1.1 of \cite{hangyang1}, the existence of a positive $G_{P_g}$ is equivalent to the existence of a conformal metric $\tilde{g}$ with semi-positive $Q$-curvature.
\end{remark}

Let now us focus on the expansion of the Green function around a singularity. If we assume that $5\leq n\leq 7$ or $g$ is locally conformally flat, then, in conformal normal coordinates $\{x^i\}$ for the conformal metric $\tilde{g}$, the Green function $G_{P}$ of $P_{\tilde{g}}$ admits an expansion of the form (see Proposition 2.5 in \cite{Malchiodi1})
\begin{align}\label{Greenfunc.1}
G_P(p,x)=\frac{\gamma_n}{r^{n-4}}+\alpha+O_4(r),
\end{align} 
where $r(x)\doteq d_{\tilde{g}}(p,x)$, $\gamma_n\doteq \frac{1}{2(n-2)(n-4)\omega_{n-1}}$ and $\alpha$ is a constant \textbf{called the mass}.
\begin{remark}
\label{remsG}
As remarked in \cite{hangyang1}, under the condition $\mathrm{Ker}(P_g)=0$ (which is itself conformally invariant), the sign of the Green function is a conformal invariant since,
\[G_{P_{u^\frac{4}{n-4}g}}(p,q)=u(p)^{-1}u(q)^{-1} G_{P_g}(p,q).\]
 Hence, the sign of the mass is also a conformal invariant. 
\end{remark}

This terminology arises in analogy to the Yamabe problem, where the Green function $G_{L_g}$ of the conformal Laplacian is involved and, in important cases, it admits a similar expansion to (\ref{Greenfunc.1}). In the case of the conformal Laplacian, it was an observation of R. Schoen that the constant which appears in the place of $\alpha$ is precisely the ADM mass of the AE-manifold obtained by via the stereographic projection $(M\backslash \{ p \},G^{\frac{4}{n-2}}_{L}g)$. This showed that the resolution of the positive mass conjecture in general relativity would amount to completing the resolution of the Yamabe problem, which concerned the cases $Y(g)>0$ and dimensions $n=3,4,5$ or $M$ locally conformally flat (see \cite{Schoen1} and \cite{Lee-Parker}).

The positive mass theorem presented in \cite{Malchiodi1} as Theorem 2.9 states that under the conditions described above which provide us with the expansion (\ref{Greenfunc.1}), the mass satisfies $\alpha\geq 0$ with equality holding if and only if $(M,g)$ is conformal to the round sphere. This came about as a generalisation of \cite{Raulot}, where this result was proven in the conformally flat case, and, in turn, all this was generalised in \cite{hangyang2}, where the condition $R_g\geq 0$ was replaced by $Y([g])>0$. Furthermore, in Theorem 1.4 of \cite{hangyang1}, this positive mass theorem is used to solve the $Y_4([g])>0$ $Q$-curvature prescription problem in very much the same spirit as the usual positive mass theorem was used by Schoen in \cite{Schoen1}. We intend to show that all these results are special cases of the positive energy theorem presented above. 

Finally, let us notice that, for instance, in the $Q$-curvature prescription problem it is the sign of the mass that is actually important rather than its precise value. This becomes especially useful when it is combined with the additional observation that the sign of the mass is itself a conformal invariant, which was highligted in Remark \ref{remsG}. Thus, being concerned with a conformal problem, given $(M^n,[g])$, we will always consider a choice of $g$ satisfying the conformal normal coordinate properties (\ref{confcoord.0}),(\ref{confcoord.1}),(\ref{confcoord.2}). Notice that from the discussion presented in that appendix, this can always be achieved by first going to a related conformal metric. 

In order to prepare for the main statements of this section, let us first establish a couple of preliminary results.

\begin{prop}\label{blowup.0}
Let $(M^n,g)$ be a closed manifold satisfying $n\geq 5$ and whose Panietz operator admits a positive Green function $G_P$ with an expansion as (\ref{Greenfunc.1}) around some point $p\in M$. Then, the manifold $(\hat{M}\doteq M\backslash\{p\},\hat{g}\doteq G_P(p,\cdot)^{\frac{4}{n-4}}g)$ is an asymptotically flat manifold of order $\tau=1$ if $n=5$ and $\tau=2$ if $n> 5$. Furthermore, either if $5\leq n\leq 7$ or $g$ is flat around $p$, then $\mathcal{E}(\hat{g})=8(n-1)(n-2)\omega_{n-1}\gamma_n\alpha$.
\end{prop}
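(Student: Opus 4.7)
The plan is to transplant the problem to the end by means of a Euclidean inversion and then read off both the decay rate and the energy from the expansion of $G_P$. First I would choose conformal normal coordinates $\{x^i\}$ for $g$ centered at $p$, so that $g_{ij}(x)=\delta_{ij}+O(|x|^2)$ and $R_g$ vanishes at $p$ to arbitrarily high order (Appendix C). To send $p$ to infinity I use the inversion $F\colon y\mapsto x=y/|y|^2$ and set $\rho=|y|=1/|x|$. Since $F^{*}\delta=\rho^{-4}\delta$, a direct calculation gives $(F^{*}g)_{kl}(y)=\rho^{-4}\bigl(\delta_{kl}+O(\rho^{-2})\bigr)$. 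Combining with the asymptotic expansion $u\circ F=\gamma_n\rho^{n-4}+\alpha+O(\rho^{-1})$ of the Green function and hence
\[
u^{4/(n-4)}\circ F=\gamma_n^{4/(n-4)}\rho^{4}\left(1+\frac{4\alpha}{(n-4)\gamma_n}\rho^{-(n-4)}+O(\rho^{-(n-3)})\right),
\]
I multiply and absorb the overall constant $\gamma_n^{4/(n-4)}$ by rescaling $\tilde y=\gamma_n^{2/(n-4)}y$, $\tilde\rho=\gamma_n^{2/(n-4)}\rho$. Using $\rho^{-(n-4)}=\gamma_n^{2}\tilde\rho^{-(n-4)}$ the pulled-back metric becomes
\[
\hat g_{kl}(\tilde y)=\delta_{kl}+\frac{4\gamma_n\alpha}{n-4}\,\tilde\rho^{-(n-4)}\delta_{kl}+O(\tilde\rho^{-2})+O(\tilde\rho^{-(n-3)}),
\]
from which the AE structure follows, with decay rate $\tau=\min(2,n-4)$, i.e.\ $\tau=1$ if $n=5$ and $\tau=2$ if $n\geq 6$; when $g$ is flat near $p$ the $O(\tilde\rho^{-2})$ term is absent and $\tau=n-4$.

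For the energy, the key observation is that $P_gG_P=0$ on $\hat M$, which by the conformal transformation law of the Paneitz operator gives $Q_{\hat g}\equiv 0$ on $\hat M$; in particular $Q_{\hat g}\in L^{1}(\hat M)$ is trivial. Combined with the decay rates above, the hypotheses of Propositions \ref{Static.1} and \ref{Geometricmass} are met in the two stated regimes ($5\leq n\leq 7$ or $g$ flat around $p$), so
\[
\mathcal E(\hat g)=-\lim_{\tilde R\to\infty}\int_{S^{n-1}_{\tilde R}}\partial_{\tilde\rho}R_{\hat g}\,\tilde R^{n-1}\,d\omega_{n-1}.
\]
The dominant piece of $\hat g-\delta$ at infinity is the conformal perturbation $h_{kl}=\lambda\delta_{kl}$ with $\lambda=\tfrac{4\gamma_n\alpha}{n-4}\tilde\rho^{-(n-4)}$. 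For such a trace-type $h$, linearisation of the scalar curvature yields $R_{\delta+h}=-(n-1)\Delta\lambda+O(h^{2},|\partial h|^{2})$; using the Euclidean identity $\Delta\tilde\rho^{-(n-4)}=-2(n-4)\tilde\rho^{-(n-2)}$ gives
\[
R_{\hat g}=8(n-1)\gamma_n\alpha\,\tilde\rho^{-(n-2)}+(\text{lower order}),
\]
so that $\partial_{\tilde\rho}R_{\hat g}=-8(n-1)(n-2)\gamma_n\alpha\,\tilde\rho^{-(n-1)}$ to leading order and integration over $S^{n-1}_{\tilde R}$ in the limit $\tilde R\to\infty$ returns the claimed
$\mathcal E(\hat g)=8(n-1)(n-2)\omega_{n-1}\gamma_n\alpha$.

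The one genuinely delicate step is verifying that the subleading terms of $\hat g$ contribute nothing in the limit. The $O(\tilde\rho^{-(n-3)})$ piece produces $R_{\hat g}$-terms of order $\tilde\rho^{-(n-1)}$, so its contribution to $\tilde R^{n-1}\partial_{\tilde\rho}R_{\hat g}$ is $O(\tilde R^{-1})$ and vanishes. The $O(\tilde\rho^{-2})$ correction, which arises from $g-\delta=O(|x|^{2})$ in CNC, is the main obstacle: one must use the CNC vanishing of $R_g$ (and of the relevant traced derivatives) at $p$, together with the conformal formula for $R_{\hat g}$ in terms of $R_g$, $u^{-1}\Delta_g u$ and $u^{-2}|\nabla u|^{2}_g$, to show that the $u^{-1}\Delta_g u$ and $u^{-2}|\nabla u|^{2}_g$ contributions of this piece either cancel or decay strictly faster than $\tilde\rho^{-(n-2)}$. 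This is where the dimensional restriction $5\leq n\leq 7$ is used; in the locally conformally flat case the whole step is vacuous because $g-\delta$ may be taken to vanish identically near $p$. Once this bookkeeping is carried out, the computation in the previous paragraph is the only term that survives and the proposition follows.
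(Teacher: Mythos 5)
Your proposal follows the same route as the paper: conformal normal coordinates at $p$, the inversion $z=\gamma_n^{2/(n-4)}x/r^2$, and the expansion of $G_P$ to read off both the decay rate and the energy; your leading-order computation of $R_{\hat g}$ from the conformal piece $\bigl(1+\tfrac{4\gamma_n\alpha}{(n-4)\rho^{n-4}}\bigr)\delta_{ij}$, and the resulting value $8(n-1)(n-2)\omega_{n-1}\gamma_n\alpha$, are correct. The one substantive difference is how the two arguments dispose of the curvature-induced corrections $H^2_{ij}=\tfrac{\gamma_n^{4/(n-4)}}{3}R_{iabj}(p)z^az^b\rho^{-4}$ and $H^3_{ij}=\tfrac{\gamma_n^{6/(n-4)}}{6}R_{iabj,c}(p)z^az^bz^c\rho^{-6}$, which is exactly the step you flag as delicate and then leave as ``bookkeeping.'' A crude count shows why it cannot be skipped: an $O(\rho^{-2})$ metric perturbation contributes $O(\rho^{-4})$ to the linearized scalar curvature, and $\rho^{n-1}\partial_\rho(\rho^{-4})=O(\rho^{n-6})$ does not tend to zero for $n=6,7$. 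The paper resolves this not through the conformal formula for $R_{\hat g}$ (your suggested route, which would force you to track the $u^{-1}\Delta_g u$ and $|\nabla u|_g^2$ terms), but by working directly with the coordinate expression for the energy and observing that the relevant traces vanish identically: $\partial_{aa}H^2_{ii}=\partial_{ij}H^2_{ij}=0$ and likewise for $H^3$, as consequences of $R_{ij}(p)=0$, the symmetrized condition on $R_{ij,k}(p)$ from (\ref{confcoord.1}), and the antisymmetries giving $R_{iabj}(p)z^iz^a=0$. You should carry out this short computation (or its equivalent in your formulation) to close the argument; as written, the key cancellation is asserted rather than proved. Everything else --- the AE order $\tau=\min\{2,n-4\}$, the improved decay when $g$ is flat near $p$, the identity $Q_{\hat g}\equiv 0$ guaranteeing integrability, and the applicability of Propositions \ref{Static.1} and \ref{Geometricmass} precisely when $5\leq n\leq 7$ or $g$ is flat near $p$ --- matches the paper.
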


\begin{proof}
Let us fix the conformal normal coordinates $\{x^i\}$ where (\ref{Greenfunc.1}) holds. Let us appeal to the conformal normal coordinate construction of order $N\geq 4$ described Appendix B so that (\ref{confcoord.1})-(\ref{confcoord.2}) hold, and start by considering the following expansion near $p$ 
\begin{align}\label{blowup.1}
g_{ij}=\delta_{ij}+\frac{1}{3}R_{iklj}(p)x^kx^l + \frac{1}{6} R_{iklj,a}(p)x^kx^lx^a + O(r^4),
\end{align}
Now, consider the inverted coordinates $z^i=\gamma_n^{\frac{2}{n-4}}\frac{x^i}{r^2}$ in a neighborhood of $p$ and define $\rho^2\doteq |z|^2$, so that $\rho^2=\gamma_n^{\frac{4}{n-4}}r^{-2}$ and $x^{i}=\gamma_n^{\frac{2}{n-4}}\frac{z^i}{\rho^2}$. Then, it holds that $\gamma_nr^{-(n-4)}=\gamma_n^{-1}\rho^{n-4}$ and
\begin{align}
\frac{\partial}{\partial z^i}=\gamma_n^{\frac{2}{n-4}}\rho^{-2}\left(\delta^j_i - 2\rho^{-2}z^iz^j \right)\frac{\partial}{\partial x^j},
\end{align}  
which, appealing to (\ref{Greenfunc.1}), implies
\begin{align*}
\hat{g}_{ij}(z)&=\left( \gamma^{-1}_n\rho^{n-4} + \alpha +O_4(\rho^{-1}) \right)^{\frac{4}{n-4}}g(\partial_{z^i},\partial_{z^j}),\\
&= \left( 1 + \frac{\gamma_n\alpha}{\rho^{n-4}} + O_4(\rho^{3-n}) \right)^{\frac{4}{n-4}}\left(\delta^i_k - 2\rho^{-2}z^iz^k \right)\left(\delta^j_l - 2\rho^{-2}z^jz^l \right)g_{kl}\left(\gamma_n^{\frac{2}{n-4}}\rho^{-2}z\right).
\end{align*}
Invoking (\ref{blowup.1}), this implies
\begin{align*}
\hat{g}_{ij}(z)&=\left( 1 + \frac{\gamma_n\alpha}{\rho^{n-4}} + O_4(\rho^{3-n}) \right)^{\frac{4}{n-4}}h_{ij}(z),
\end{align*}
where 
\begin{align*}
h_{ij}(z)&=\left( \delta_{k}^i\delta_{l}^j-2 \frac{z^iz^k}{\rho^2}\delta_l^j-2 \frac{z^jz^l}{\rho^2}\delta_k^i+4 \frac{z^iz^jz^kz^l}{\rho^4} \right)\left( \delta_{kl}+\frac{\gamma_n^{\frac{4}{n-4}}}{3}R_{kmnl}(p)\frac{z^m z^n}{\rho^4}\right.\\
&+\left.\frac{\gamma^{\frac{6}{n-4}}_n}{6}R_{kmnl,p}(p) \frac{z^m z^n z^p}{\rho^6} +O_4\left(\frac{1}{\rho^4}\right) \right),\\
&= \delta_{ij} + H^2_{ij}(z)  + H^3_{ij}(z) + O_4\left(\frac{1}{\rho^4}\right)
\end{align*}
where
\begin{align*}
\gamma^{-\frac{4}{n-4}}_nH^2_{ij}&=\frac{1}{3}R_{imnj}(p)\frac{z^m z^n}{\rho^4} - \frac{2}{3}\underbrace{R_{kmnj}(p)z^kz^m}_{=0}\frac{z^n z^i}{\rho^6} -\frac{2}{3}\underbrace{R_{imnl}(p)z^nz^l}_{=0}\frac{z^m z^j}{\rho^6}  \\
&+\frac{4}{3}\underbrace{R_{kmnl}(p)z^kz^m}_{=0}\frac{z^nz^l z^iz^j}{\rho^8},\\
&=\frac{1}{3}R_{imnj}(p)\frac{z^m z^n}{\rho^4}
\end{align*}
and
\begin{align*}
\gamma^{-\frac{6}{n-4}}_nH^3_{ij}&=\frac{1}{6}R_{imnj,a}(p)\frac{z^m z^nz^a}{\rho^6} -\frac{1}{3}\underbrace{R_{kmnj,a}(p)z^kz^m}_{=0}\frac{z^n z^iz^a}{\rho^8} - \frac{1}{3}\underbrace{R_{imnl,a}(p)z^nz^l}_{=0}\frac{z^mz^jz^a}{\rho^8} \\
&+ \frac{2}{3}\underbrace{R_{kmnl,a}(p)z^kz^m}_{=0}\frac{z^nz^l z^iz^jz^a}{\rho^{10}},\\
&=\frac{1}{6}R_{imnj,a}(p)\frac{z^m z^nz^a}{\rho^6}.
\end{align*}
Putting together all the above, we find that
\begin{align*}
\hat{g}_{ij}(z)&=\left( 1 + \frac{4}{n-4}\frac{\gamma_n\alpha}{\rho^{n-4}} \right)\delta_{ij} + \frac{\gamma^{\frac{4}{n-4}}_n}{3}R_{iabj}(p)\frac{z^az^b}{\rho^4} +  \frac{\gamma^{\frac{6}{n-4}}_n}{6}R_{iabj,c}(p)\frac{z^az^bz^c}{\rho^6} \\
&+ O_4(\rho^{-(n-3)}) +  O_4(\rho^{-4}),
\end{align*}
which establishes the asymptotic flatness condition claimed in the proposition. Notice that if $5\leq n\leq 7$, then, from Proposition \ref{Static.1}, we know that $\mathcal{E}(\hat{g})$ is well-defined and the above expansion implies
\begin{align*}
\partial_a\hat{g}_{ij}&= -4\gamma_n\alpha\frac{z^a}{\rho^{n-2}}\delta_{ij} + \partial_a H^2_{ij} +\partial_a H^3_{ij} +O_3\left(\rho^{-(n-2)}\right),\\
\partial_{ba}\hat{g}_{ij}&=-4\gamma_n\alpha\left(\frac{\delta_{ab}}{\rho^{n-2}}-(n-2)\frac{z^bz^a}{\rho^{n}}\right)\delta_{ij} + \partial_{ba} H^2_{ij} +\partial_{ba} H^3_{ij} +O_2\left(\rho^{-(n-1)}\right).
\end{align*}
Appealing to (\ref{confcoord.1})-(\ref{confcoord.2}), it is not difficult to explicitly compute that $\partial_{aa} H^2_{ii}$ and $\partial_{aa} H^3_{ii}$ both vanish. This implies
\begin{align}\label{blowup.2}
\partial_{aa}\hat{g}_{ii}&=-8n\gamma_n\alpha \rho^{-(n-2)} + O_2(\rho^{-(n-1)}).
\end{align}
Similarly, appealing to (\ref{confcoord.1})-(\ref{confcoord.2}), it also holds that $\partial_{ij} H^2_{ij}=0$ and $\partial_{ij} H^3_{ij}=0$, and therefore we find that
\begin{align}\label{blowup.3}
\partial_{ij}\hat{g}_{ij}&=- 8\gamma_n\alpha\rho^{-(n-2)} + O_2(\rho^{-(n-1)}).
\end{align}
Thus, putting together (\ref{blowup.2})-(\ref{blowup.3}), we see that
\begin{align}\label{blowup.4}
\left(\partial_{caa}\hat{g}_{ii}(z)-\partial_{cij}\hat{g}_{ij}\right)\frac{z^c}{\rho}&=8\gamma_n\alpha(n-1)(n-2) \rho^{-(n-1)} + O_1(\rho^{-n}),
\end{align}
implying that
\begin{align}\label{blowup.5}
\mathcal{E}(\hat{g})&=\lim_{\rho\rightarrow\infty}\int_{S_{\rho}}\left(\partial_{caa}\hat{g}_{ii}(z) - \partial_{cij}\hat{g}_{ij}(z) \right)\frac{z^c}{\rho}d\omega_{\rho}=\omega_{n-1} 8\gamma_n\alpha (n-1)(n-2).
\end{align}

Finally, in $g$ is flat near $p$, without restriction on the dimension, we know that the expansion (\ref{Greenfunc.1}) holds in rectangular coordinates around $p$, where $g_{ij}(x)=\delta_{ij}$. Thus, the same computations as above show that 
\begin{align*}
\hat{g}_{ij}(z)&=\left( 1 + \frac{4}{n-4}\frac{\gamma_n\alpha}{\rho^{n-4}} \right)\delta_{ij} + O_4(\rho^{-(n-3)}) .
\end{align*}
In this case the order of decay is improved for $n\geq 7$ and in particular we know that $\mathcal{E}(\hat{g})$ is well-defined. From the above expression, the same computations that led us to (\ref{blowup.5}) prove that $\mathcal{E}(\hat{g})=8 \omega_{n-1}(n-1)(n-2)\gamma_n\alpha $.
\end{proof}

In order to apply the positive energy theorem to establish that $\alpha\geq 0$, we first need to check that $\hat{g}$ satisfies its hypotheses. With this in mind, consider the following proposition.

\begin{prop}\label{blowup-scalcurv2}
Consider a closed Riemannian manifold $(M^n,g)$, with $ n\geq 5$, which admits a conformal metric with positive $Q$-curvature such that $Y([g])\geq 0$. Then, there exists a conformal metric $\tilde{g}$ such that the asymptotically flat manifold $(\hat{M}=M\backslash\{p\},\hat{g}=G_{P_{\tilde{g}}}^{\frac{4}{n-4}}\tilde{g})$ satisfies $Y([\hat{g}])> 0$ and $Q_{\hat{g}}\equiv 0$.
\end{prop}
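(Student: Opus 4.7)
My plan is to proceed in three steps: first select a convenient representative $\tilde g\in[g]$ whose Paneitz operator admits a positive Green function, then verify $Q_{\hat g}\equiv 0$ directly from the Paneitz transformation rule, and finally establish $Y([\hat g])>0$ via conformal invariance of the Yamabe quotient. By Remark \ref{remG}, the hypothesis that $[g]$ carries a conformal metric with semi-positive $Q$-curvature together with $Y([g])\geq 0$ forces $Y([g])>0$ and, through Theorem 1.1 of \cite{hangyang1}, the positivity of the Green function $G_{P_g}$. I pick $\tilde g\in[g]$ by the Yamabe theorem so that $R_{\tilde g}$ is a strictly positive constant; by Remark \ref{remsG} the sign of $G_{P}$ is conformally invariant, hence $G_{P_{\tilde g}}>0$, and $\hat g=G_{P_{\tilde g}}^{4/(n-4)}\tilde g$ is a genuine smooth Riemannian metric on $\hat M=M\setminus\{p\}$.

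The vanishing $Q_{\hat g}\equiv 0$ follows immediately from the conformal transformation law
\[\frac{n-4}{2}\,G_{P_{\tilde g}}(p,\cdot)^{(n+4)/(n-4)}\,Q_{\hat g}=P_{\tilde g}G_{P_{\tilde g}}(p,\cdot)\quad\text{on }\hat M,\]
whose right-hand side is identically zero on $\hat M$ by the defining property of the Green function of the Paneitz operator with pole at $p$.

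The main work lies in establishing $Y([\hat g])>0$. The idea is to exploit the conformal invariance of the Yamabe functional directly: for $\phi\in C_c^{\infty}(\hat M)$, the function $\psi\doteq G_{P_{\tilde g}}(p,\cdot)^{(n-2)/(n-4)}\phi$ lies in $C_c^{\infty}(\hat M)$, extends smoothly by zero to $M$, and satisfies
\[\frac{\int_{\hat M}\bigl(|\nabla \phi|_{\hat g}^2+c_n R_{\hat g}\phi^2\bigr)dv_{\hat g}}{\|\phi\|_{L^{2n/(n-2)}(\hat g)}^2}=\frac{\int_M\bigl(|\nabla \psi|_{\tilde g}^2+c_n R_{\tilde g}\psi^2\bigr)dv_{\tilde g}}{\|\psi\|_{L^{2n/(n-2)}(\tilde g)}^2}.\]
Since $C_c^{\infty}(\hat M)$ is dense in $H^1(M)$ (single points carry zero $H^1$-capacity for $n\geq 3$), taking the infimum on both sides yields $Y([\hat g])=Y([\tilde g])=Y([g])>0$. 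Equivalently, one may invoke Theorem 5.1 of \cite{Maxwell}: the restriction $\tilde g|_{\hat M}$ lies in the conformal class $[\hat g]$ and has strictly positive scalar curvature, so $Y([\hat g])>0$.

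The main obstacle is the careful interpretation of $Y([\hat g])$ on the AE manifold $\hat M$ and the correspondence between test function classes on $\hat M$ and on the compactification $M$. Once this correspondence is set up, either via the density of $C_c^{\infty}(\hat M)$ in the relevant Sobolev space or via Maxwell's characterization, the positivity conclusion is immediate from the conformal invariance of the setup and the assumption $Y([g])>0$.
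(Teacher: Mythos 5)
Your argument for $Q_{\hat g}\equiv 0$ and your identification of the hypotheses (Remark \ref{remG} forcing $Y([g])>0$ and the positivity of $G_{P}$) match the paper. For the key step $Y([\hat g])>0$ you take a genuinely different route: the paper simply exhibits an explicit scalar-flat representative of $[\hat g]$, namely $\bar g=\bigl(G_{L}^{2}G_{P}^{-2(n-2)/(n-4)}\bigr)^{2/(n-2)}\hat g=G_{L}^{4/(n-2)}\tilde g$ built from the Green function of the conformal Laplacian (which exists because $Y([g])>0$), and this is exactly what Yamabe positivity is used for later via Theorem 5.1 of \cite{Maxwell}. Your route — conformal invariance of the Yamabe quotient on compactly supported test functions plus the fact that a point has zero $H^1$-capacity for $n\geq 3$, giving $Y([\hat g])=Y([g])>0$ — is correct and arguably more self-contained, since it does not require constructing $G_L$. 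Be aware, though, that your "equivalent" fallback (applying Maxwell's Theorem 5.1 to $\tilde g|_{\hat M}$) is not quite legitimate as stated: that metric is not asymptotically Euclidean, and Maxwell's characterization is a statement about AE representatives; the density/capacity argument is the one that actually closes the step.

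The one genuine gap is your choice of $\tilde g$. You take the Yamabe minimizer with constant positive scalar curvature, but the proposition asserts that $(\hat M,\hat g)$ is \emph{asymptotically flat}, and the only place this is established (Proposition \ref{blowup.0}) relies on the expansion (\ref{Greenfunc.1}) of $G_{P}$, which is derived in \emph{conformal normal coordinates}; nothing guarantees that the Paneitz Green function of the constant-scalar-curvature representative has the expansion needed to make $G_{P_{\tilde g}}^{4/(n-4)}\tilde g$ AE of the required order. You never address AE-ness at all. The fix is immediate: take $\tilde g$ to be the conformal-normal-coordinate representative as in Proposition \ref{blowup.0} (your capacity argument is insensitive to which representative of $[g]$ is used, and the constancy of $R_{\tilde g}$ plays no role in it), and the proof is complete.
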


\begin{proof}
Thanks to Remark \ref{remG},  $Y([g]) >0$ and $G_{P_{\tilde{g}}}$ exists and is positive for every $\tilde{g}\in [g]$. Then, thanks to proposition \ref{blowup.0}, we only have to prove that $Y([\hat{g}]) > 0$. Since $Y([g])>0$, let $G_{L_{\tilde{g}}}$ be the Green's function of the conformal Laplacian. We trivially get  that $\bar{g}= \left(\frac{G^2_L}{G_P^\frac{2(n-2)}{n-4}} \right)^\frac{2}{n-2}\hat{g}$ is scalar flat.
\end{proof}

\medskip

Equipped with the above two propositions, we can recover the following theorem, originally proved by Hang-Yang in \cite{hangyang1}, and which is an extension of results of Gursky-Malchiodi \cite{Malchiodi1}
and Humbert-Raulot \cite{Raulot}.

\begin{thm}\label{QPM.2}
Let $(M,g)$ be a closed $n$-dimensional Riemannian manifold, with $ 5\leq n\leq 7$  or $n\geq 8$ and  locally conformally flat around some point $p\in M$. If $Y([g])\geq 0$ and $(M,g)$ admits a conformal metric with semi-positive $Q$-curvature, then the  mass of $G_{P}$ at $p$ is non-negative and vanishes if and only if $(M,g)$ is conformal to the standard sphere.
\end{thm}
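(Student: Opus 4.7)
The strategy is to reduce this to a direct application of the positive energy theorem (Theorem \ref{PEthm3}) via the stereographic projection constructed in Propositions \ref{blowup.0} and \ref{blowup-scalcurv2}. First, I would use Remark \ref{remsG}, which says that under $\mathrm{Ker}(P_g)=0$ the sign of the Paneitz mass is conformally invariant. Combined with the hypothesis that some conformal metric has semi-positive $Q$-curvature together with $Y([g])\geq 0$, Remark \ref{remG} forces $Y([g])>0$, so $G_{P_{\tilde g}}$ exists and is positive for every $\tilde g\in[g]$. Thus it is enough to prove the theorem for any convenient representative of the conformal class, and I invoke Proposition \ref{blowup-scalcurv2} to choose $\tilde g\in[g]$ such that the AE manifold $(\hat M,\hat g)=(M\setminus\{p\},G_{P_{\tilde g}}^{4/(n-4)}\tilde g)$ satisfies $Y([\hat g])>0$ and $Q_{\hat g}\equiv 0$.

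Next, Proposition \ref{blowup.0} provides the decay $\tau=1$ if $n=5$, $\tau=2$ if $n\in\{6,7\}$, and $\tau=n-3$ in the locally conformally flat case for $n\geq 8$. A quick check shows $\tau>\tau_n=n/2-2$ in every case listed in the theorem, so hypothesis (i) of Proposition \ref{Static.1} holds; hypothesis (ii), $Q_{\hat g}\in L^1(\hat M)$, is automatic since $Q_{\hat g}\equiv 0$. Hence all the hypotheses of Theorem \ref{PEthm3} are met, and we conclude $\mathcal{E}(\hat g)\geq 0$. The same Proposition \ref{blowup.0} identifies
\begin{equation*}
\mathcal{E}(\hat g) = 8(n-1)(n-2)\omega_{n-1}\gamma_n\,\alpha,
\end{equation*}
so positivity of the energy gives $\alpha\geq 0$ at once.

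For rigidity, if $\alpha=0$ then $\mathcal{E}(\hat g)=0$, and the rigidity statement in Theorem \ref{PEthm3} yields that $(\hat M,\hat g)$ is isometric to $(\mathbb{R}^n,\delta)$. Since $\tilde g=G_{P_{\tilde g}}^{-4/(n-4)}\hat g$ on $M\setminus\{p\}$, this exhibits $\tilde g$ on $\hat M$ as conformally flat with the conformal factor having precisely the asymptotic profile of an inverse stereographic projection at $p$ (coming from the $\gamma_n r^{-(n-4)}$ leading term in the Green's function expansion); the standard one-point compactification argument then shows $(M,\tilde g)$, and therefore $(M,g)$, is conformal to the round sphere $(S^n,g_{\mathrm{round}})$. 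The converse direction is immediate by symmetry of the round sphere: the Paneitz Green function there has vanishing mass at every point. The only place one must take a little care is the rigidity step, where one has to verify that the developing map from $(\hat M,\hat g)$ to $(\mathbb{R}^n,\delta)$ combined with the behavior of $G_{P_{\tilde g}}$ near $p$ really does produce the inverse stereographic projection; the rest of the argument is a bookkeeping application of the positive energy theorem and the mass formula of Proposition \ref{blowup.0}.
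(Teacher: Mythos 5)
Your proposal is correct and follows essentially the same route as the paper: use Remarks \ref{remG}--\ref{remsG} and Propositions \ref{blowup.0} and \ref{blowup-scalcurv2} to produce the stereographically projected AE manifold $(\hat{M},\hat{g})$ with $Y([\hat{g}])>0$, $Q_{\hat{g}}\equiv 0$ and the right decay, apply Theorem \ref{PEthm3}, and read off the sign of $\alpha$ from the identity $\mathcal{E}(\hat{g})=8(n-1)(n-2)\omega_{n-1}\gamma_n\alpha$, with rigidity via one-point compactification. The dimension-by-dimension check that $\tau>\tau_n$ is a useful explicit addition but does not change the argument.
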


It is important to note that here the hypothesis are conformally invariant. In dimension $n\geq 6$, thanks to Corollary 1.1 \cite{GHL} and remark \ref{remG}, they are equivalent to $Y([g])> 0$ and $Y_4^*([g]) >0$,
where 

$$Y_4^*([g]) = \frac{n-4}{2} \inf_{\tilde{g}\in [g], R_{\tilde{g}}>0} \frac{\int_M Q_{\tilde{g}} \, dv_{\tilde{g}}}{\mathrm{Vol}_{\tilde{g}} (M)}.$$
Of course $Y_4^*([g])$ is also conformally invariant.

\begin{proof}
Under these conditions, we know that the Green function $G_P$ exists and is positive for every element in $[g]$. From Proposition \ref{blowup.0} we also know that there is a conformal metric $\tilde{g}$ such that the manifold $(\hat{M}=M\backslash\{p\},\hat{g}=G_{P_{\tilde{g}}}^{\frac{4}{n-4}}\tilde{g})$ is AE, satisfies the decay assumptions of Theorem \ref{PEthm3} and, furthermore the energy is positively proportional to the mass of $G_{P_{\tilde{g}}}$. Finally, from Proposition \ref{blowup-scalcurv2}, we know that $Y([\hat{g}])>0$ and $Q_{\hat{g}}\equiv 0$. Therefore, $(\hat{M},\hat{g})$ satisfies all the hypotheses of Theorem \ref{PEthm3}, and thus the non-negativity follows directly from $\mathcal{E}(\hat{g})$ being positively proportional to $\alpha$. Finally, if $\alpha=0$, we find that $\hat{M}$ is isometric to $\mathbb{R}^n$. Being $M$ the one point compactification of $\hat{M}$, we see that $M\cong S^n$ and $g$ is conformal to the round metric.

\end{proof}
 


\subsection{The 4-dimensional case}

Finally, we would like to end by briefly discussing some peculiarities concerning the analysis of the four dimensional case. In particular, a large part of what we have done above for the $Q$-curvature positive mass theorem can be translated perfectly well to $n=4$. Nevertheless, in view of Corollary \ref{PEthm4d} it should be clear that strong restrictions should appear in the treatment of this special case. Although the origins of some of these restriction may be evident to experts in $Q$-curvature analysis, the aim in what follows is to make explicit the subtle differences of this case and where the particularities arise. In this spirit, let us start by presenting the following lemma which combines results of \cite{Ndiaye}-\cite{Li}.

\begin{lemma}\label{Greenfunction4d.0}
Let $(M^4,g)$ be a closed Riemannian manifold satisfying $\mathrm{Ker}(P_g)=\mathbb{R}$ and $\kappa_g>0$. Then, given $p\in M$, there exists a Green's function  $G_p\doteq G_{P_g}(p,\cdot)\in C^{\infty}(M\backslash\{p\})$ with a pole at $p$, unique up to an additive constant, which satisfies
\begin{align}\label{Greendunction4d.1}
P_gG_p + Q_g = \kappa_g\delta_p
\end{align}
as distributions. Furthermore, near $p$, in $g$-normal coordinates $\{x^i\}_{i=1}^4$ the following expansion holds
\begin{align}\label{Greendunction4d.2}
G_p&=\frac{\kappa_g}{16\pi^2}\ln(r^{-2}) + \frac{\kappa_g}{16\pi^2}S_0 + \frac{\kappa_g}{16\pi^2}a_ix^i + \frac{\kappa_g}{16\pi^2}b_{ij}x^ix^j + o_4(r^{2}).
\end{align}
for some constants $S_0,a_i,b_{ij}$.
\end{lemma}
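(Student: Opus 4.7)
The plan has two parts: establish existence and uniqueness of $G_p$ by functional–analytic means, then extract the asymptotics near the pole by a parametrix construction.

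First I would invoke the Fredholm alternative for the formally self-adjoint elliptic operator $P_g\colon H^4(M)\to L^2(M)$. Its principal symbol is $|\xi|^4_g$, so it has closed range, and by the hypothesis $\ker P_g=\mathbb{R}$ this range equals the $L^2$-orthogonal complement of the constants. The distributional source $\kappa_g\delta_p - Q_g$ pairs to zero against any constant, since by definition $\int_M Q_g\,dv_g=\kappa_g$; hence the equation $P_g G_p = \kappa_g\delta_p - Q_g$ is solvable and its solution is unique modulo an additive constant. Because the right-hand side is smooth off $p$, interior elliptic regularity immediately gives $G_p\in C^\infty(M\setminus\{p\})$.

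Second, I would compare $G_p$ to the Euclidean parametrix. In $\mathbb{R}^4$, the biharmonic fundamental solution is $F(x)=\frac{1}{16\pi^2}\log|x|^{-2}$, satisfying $\Delta^2 F=\delta_0$. In $g$-normal coordinates centered at $p$ the principal part of $P_g$ is $\Delta_g^2$, which coincides with $\Delta_{\text{eucl}}^2$ up to correction terms whose coefficients vanish at $p$; the remaining lower-order pieces of $P_g$ involve curvature quantities which are bounded near $p$. Thus the cut-off parametrix $\Psi(x)\doteq \frac{\kappa_g}{16\pi^2}\chi(r)\log r^{-2}$ satisfies $P_g\Psi = \kappa_g\delta_p + E$ where the error $E$ is strictly less singular than $r^{-4}$ at the origin.

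Third, writing $G_p=\Psi+H$, the remainder $H$ solves an elliptic equation $P_g H=-Q_g-E$ whose right-hand side is $L^p$ near $p$ for some $p>1$, and by progressively adding polynomial and $\log r$-type corrections to the parametrix one can absorb the intermediate singular contributions until the residual source is H\"older continuous. Schauder estimates then place $H$ in $C^{4,\alpha}$ near $p$, whose Taylor expansion at $p$ identifies $\frac{\kappa_g}{16\pi^2}S_0=H(p)$, the linear coefficients $\frac{\kappa_g}{16\pi^2}a_i=\partial_i H(p)$, and the quadratic coefficients $\frac{\kappa_g}{16\pi^2}b_{ij}=\tfrac{1}{2}\partial_i\partial_j H(p)$, with the $o_4(r^2)$ remainder dictated by the $C^{4,\alpha}$ regularity. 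The main technical obstacle is precisely this bookkeeping of intermediate logarithmic and polynomial corrections needed to reach $C^4$ control with quadratic vanishing of the remainder: the discrepancy between $\Delta_g^2$ and $\Delta_{\text{eucl}}^2$ in normal coordinates produces terms of orders $\log r$ and $r^2\log r$ which must be canceled explicitly before Schauder theory becomes effective. This is exactly the bookkeeping carried out in the references \cite{Ndiaye,Li} cited in the statement.
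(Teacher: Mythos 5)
Your proposal follows essentially the same route as the paper: the paper obtains existence by citing Lemma 2.1 of \cite{Ndiaye} for the Green kernel of $P_g$ (valid precisely when $\mathrm{Ker}(P_g)=\mathbb{R}$), corrects it by a smooth solution $U$ of $P_gU=-(Q_g-\bar{Q}_g)$, and then delegates the local expansion entirely to the Appendix of \cite{Li}; your Fredholm construction and parametrix sketch are just the standard proofs behind those two citations.

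Two caveats. First, the Fredholm alternative for $P_g\colon H^4\to L^2$ does not apply as stated, since $\delta_p\notin L^2$ on a $4$-manifold; you must run the argument on $P_g\colon H^{s}\to H^{s-4}$ with $s<2$ (kernel and cokernel are unchanged by elliptic regularity, and the orthogonality $\langle\kappa_g\delta_p-Q_g,1\rangle=\kappa_g-\kappa_g=0$ is exactly what you need), or first build the Green kernel of $P_g$ as the paper does. Second, and more substantively: your bootstrap, as written, only yields an expansion of the form $\tfrac{\kappa_g}{16\pi^2}\ln(r^{-2})+c\,r^2\ln r+(\text{$C^{4,\alpha}$ remainder})$, and since $r^2\ln r$ is \emph{not} $o(r^2)$ as $r\to0$, the stated conclusion requires $c=0$. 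That vanishing is not supplied by general elliptic theory; it is a cancellation specific to the Paneitz operator. One checks in normal coordinates that the full $O(r^{-2})$ part of $(P_g-\Delta_e^2)\ln r^{-2}$ vanishes: its spherical average (the contribution $\tfrac{4}{3}R(p)r^{-2}$ from $\Delta_g^2-\Delta_e^2$ cancels against $-2\,\mathrm{tr}\bigl(2\mathrm{Ric}-\tfrac{2}{3}Rg\bigr)(p)\,r^{-2}=\tfrac{4}{3}R(p)r^{-2}-2R(p)r^{-2}$ combined with the quadrupole trace) and its traceless quadrupole part both cancel between the two pieces of $P_g$. This is precisely the content of the computation in \cite{Li} that both you and the paper invoke; so your outline is sound, but the step you dismiss as bookkeeping is the only point where the specific structure of $P_g$, rather than soft arguments, is actually used, and a complete proof must carry it out.
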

\begin{proof}
From Lemma 2.1 in \cite{Ndiaye} we know that if $\mathrm{Ker}(P_g)=\mathbb{R}$, then the Paneitz operator $P_g$ admits a Green function $G_P$, which is to say that for every $u\in C^4(M)$ it holds that
\begin{align}
u(x)-\bar{u}=\int_MG_P(x,y)P_g(u(y))dV_g(y),
\end{align}
where $\bar{u}\doteq \mathrm{vol_g}(M)^{-1}\int_MudV_g$. Also, since $\mathrm{Ker}(P_g)=\mathbb{R}$, we know that there is a smooth function $U$ satisfying
\begin{align*}
P_gU=-(Q_g-\bar{Q}_g).
\end{align*}
Define $G_p\doteq \kappa_g G_P + U$, so that 
\begin{align*}
\langle P_gG_p,u \rangle&=\kappa_g u(p) - \kappa_g \bar{u} + \langle P_gU,u \rangle ,\\
&=\kappa_g u_p - \kappa_g\bar{u} - \langle Q_g,u \rangle + \langle \frac{\kappa_g}{\mathrm{vol}_g(M)},u \rangle =\kappa_g u_p - \langle Q_g,u \rangle. 
\end{align*}
Thus, we see that
\begin{align}
\langle P_gG_p + Q_g,u \rangle= \kappa_g u(p) \;\; \forall u\in C^{\infty}(M),
\end{align}
which is to say that $P_gG_p + Q_g=\kappa_g \delta_p$. The uniqueness claim follows since two different such functions $G_p$ and $\tilde{G}_p$ must satisfy $P_g(G_p-\tilde{G}_p)=0$, implying $G_p=\tilde{G}_p+c$. If we rewrite (\ref{Greendunction4d.1}) as $P_g\left(\frac{16\pi^2}{k_g}G_p\right) + \frac{16\pi^2}{k_g}Q_g=16\pi^2 \delta_p$ we can appeal quite straightforwardly to the computations of the Appendix in \cite{Li} to conclude that, in $g$-normal coordinates, the Green function has the following expansion near $p$:
\begin{align}
\frac{16\pi^2}{\kappa_g}G_p&=-2\ln(r) + S_0 + a_ix^i + b_{ij}x^ix^j + o_4(r^{2}).
\end{align}
which proves (\ref{Greendunction4d.2}).
\end{proof}

Let us now consider the same setting as in the above lemma and consider the inverted coordinates $z^i=\frac{x^i}{r^2}$ and define $(\hat{M}=M\backslash\{p\},\hat{g}=e^{2G_p}g)$. Then, 
\begin{align*}
Q_{\hat{g}}(q)=e^{-4G_p(q)}\left(P_gG_p(q) +  Q_g(q)\right)=0, \text{ for any } q\in\hat{M}.
\end{align*}
and
\begin{align}
\label{Greenfunction4d.1}
\begin{split}
\hat{g}_{ij}(z)&=e^{\ln(\rho^{4\frac{\kappa_g}{16\pi^2}})+ \frac{\kappa_g}{8\pi^2}S_0 +  \frac{\kappa_g}{8\pi^2}a_i\frac{z^i}{\rho^2} + \frac{\kappa_g}{8\pi^2}b_{ij}\frac{z^iz^j}{\rho^4} + O_4(\rho^{-3})}g(\partial_{z^i},\partial_{z^j}),\\
&=\rho^{4\frac{\kappa_g}{16\pi^2}}e^{\frac{\kappa_g}{8\pi^2}S_0}e^{\frac{\kappa_g}{8\pi^2}a_i\frac{z^i}{\rho^2} + \frac{\kappa_g}{8\pi^2}b_{ij}\frac{z^iz^j}{\rho^4} + O_4(\rho^{-3})}\rho^{-4}\left(\delta^i_k - 2\rho^{-2}z^iz^k \right)\left(\delta^j_l - 2\rho^{-2}z^jz^l \right)g_{kl}(\rho^{-2}z),\\
&=\rho^{4\Delta\kappa_g}e^{\frac{\kappa_g}{8\pi^2}S_0}e^{\frac{\kappa_g}{8\pi^2}a_i\frac{z^i}{\rho^2} + \frac{\kappa_g}{8\pi^2}b_{ij}\frac{z^iz^j}{\rho^4} + O_4(\rho^{-3})}\left(\delta_{ij} + O_4(\rho^{-2}) \right),\\
&=\rho^{4\Delta\kappa_g}e^{\frac{\kappa_g}{8\pi^2}S_0}\left(1 + \frac{\kappa_g}{8\pi^2}a_i\frac{z^i}{\rho^2} +  O_4(\rho^{-2}) \right)\left(\delta_{ij} + O_4(\rho^{-2}) \right),
\end{split}
\end{align}

where we have defined $\Delta\kappa_g\doteq \frac{\kappa_g}{16\pi^2}-1$. Let us notice that the above inversion gives an AE gemtric $\hat{g}$ iff $\Delta\kappa_g=0$, which is to say $\kappa_g=16\pi^2$. In particular, in this case, via a coordinate change of the form $\bar{z}^i=e^{S_0}z^i$, we find that
\begin{align}
\hat{g}_{ij}(\bar{z})&=\left(1+2e^{S_0}a_k\frac{\bar{z}^k}{\bar{\rho}^2} + O_4(\bar{\rho}^{-2})  \right)\left(\delta_{ij} + O_4(\bar{\rho}^{-2}) \right),
\end{align}
where $\bar{\rho}=|\bar{z}|=e^{S_0}\rho$. That is, $\rho^{-k}=e^{kS_0}\bar{\rho}^{-k}$. We see that $(\hat{M},\hat{g})$ is an AE-manifold or order $\tau=1$. Therefore, since by construction $Q_{\hat{g}}\equiv 0$, if $Y(g)> 0$, in view of Corollary \ref{PEthm4d}, we must conclude that $(M,g)$ is conformal to the round sphere. That is, the above computations together with Corollary \ref{PEthm4d} imply the following Corollary, which is not new, since it concerns the rigidity statement involved in Theorem B in \cite{Gursky}.
\begin{coro}
Let $(M,g)$ be a closed 4-dimensional manifold which satisfies $\mathrm{Ker}(P_g)=\mathbb{R}$ and $\kappa_g=16\pi^2$. If $Y(g)>0$, then $(M,g)$ is conformal to the round sphere.
\end{coro}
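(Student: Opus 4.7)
The strategy is to reduce the statement to the four-dimensional rigidity corollary (Corollary \ref{PEthm4d}) by constructing, via the Green function of the Paneitz operator, an asymptotically Euclidean manifold with vanishing $Q$-curvature, positive Yamabe invariant, and conformally equivalent to $(M\setminus\{p\},g)$.

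First, fix a point $p\in M$ and invoke Lemma \ref{Greenfunction4d.0} to produce the Green function $G_p$ of $P_g$ with pole at $p$, which satisfies $P_gG_p+Q_g=\kappa_g\delta_p$ and admits the expansion (\ref{Greendunction4d.2}) in $g$-normal coordinates. Define the conformal blow-up $(\hat{M}\doteq M\setminus\{p\},\hat{g}\doteq e^{2G_p}g)$. The distributional equation for $G_p$ immediately gives $Q_{\hat{g}}\equiv 0$ on $\hat{M}$, using the transformation law for $Q$-curvature under conformal change in dimension four.

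Next, perform the inversion $z=x/r^2$ near $p$ as in the computation displayed in (\ref{Greenfunction4d.1}). The key point is that under the hypothesis $\kappa_g=16\pi^2$, one has $\Delta\kappa_g=0$, so the factor $\rho^{4\Delta\kappa_g}$ disappears. After the harmless linear rescaling $\bar{z}^i=e^{S_0}z^i$ already described in the paragraph above the corollary, the metric in the $\bar{z}$ coordinates takes the form $\hat{g}_{ij}(\bar z)=\delta_{ij}+O_4(\bar\rho^{-1})$, so $(\hat{M},\hat{g})$ is AE of order $\tau=1>\tau_4=0$ and therefore satisfies the decay conditions (i) of Proposition \ref{Static.1}; condition (ii) is automatic because $Q_{\hat{g}}\equiv 0$ is trivially integrable.

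Finally, since the Yamabe invariant is a conformal invariant of the compactification and $Y([g])>0$, it follows that $Y([\hat g])>0$ as well. The manifold $(\hat M,\hat g)$ thus meets all hypotheses of Corollary \ref{PEthm4d}, so it must be isometric to $(\mathbb{R}^4,\delta)$. Taking the one-point compactification then identifies $M$ with $S^4$ and $g$ with a metric conformal to the round metric, yielding the desired rigidity. The only real subtlety in carrying out this plan is to ensure that the asymptotic expansion after inversion is sharp enough to place $\hat g$ in the class where Corollary \ref{PEthm4d} applies; this is precisely what the explicit expansion (\ref{Greenfunction4d.1}) was arranged to deliver, exploiting that the leading $\rho^{4\Delta\kappa_g}$ factor degenerates exactly at the critical threshold $\kappa_g=16\pi^2$.
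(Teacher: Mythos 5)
Your proposal is correct and follows essentially the same route as the paper: the paper's own argument (given in the paragraph preceding the corollary rather than in a separate proof environment) likewise blows up at $p$ via $\hat{g}=e^{2G_p}g$ using Lemma \ref{Greenfunction4d.0}, observes that $Q_{\hat g}\equiv 0$ and that the inverted-coordinate expansion (\ref{Greenfunction4d.1}) yields an AE metric of order $\tau=1$ precisely because $\Delta\kappa_g=0$ when $\kappa_g=16\pi^2$, and then concludes by Corollary \ref{PEthm4d}. The only cosmetic difference is that you make explicit the verification of hypotheses (i)--(ii) of Proposition \ref{Static.1} and the conformal invariance of Yamabe positivity, which the paper leaves implicit.
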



Finally, let us highlight that the techniques developed so far allow us to actually get more than the previous corollary. In fact, we can recover the full statement of Theorem B in \cite{Gursky} by an independent and simple proof. 

\begin{thm}[Gursky]
Let $(M^4,g)$ a $4$-dimensional manifold with $Y([g])\geq 0$, then $\kappa_g\leq 16\pi^2$ with equality holding iff $(M^4,g)$ is conformal to the standard sphere.
\end{thm}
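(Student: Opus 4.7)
The plan is to combine the pointwise $Q$-curvature formula at a Yamabe representative with the sharp Aubin bound on the Yamabe constant, and extract the rigidity either from Aubin--Schoen or from the preceding corollary.

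First I would dispose of the easy cases: if $\kappa_g\leq 0$ the inequality is trivial, and if $Y([g])=0$ then a scalar-flat Yamabe representative $\tilde g$ satisfies $Q_{\tilde g}=-\tfrac12|\mathrm{Ric}_{\tilde g}|^2\leq 0$, giving $\kappa_g=\kappa_{\tilde g}\leq 0$ by the conformal invariance of $\kappa_g$ in dimension four. Thus I may assume $Y([g])>0$ and $\kappa_g>0$, and pick a Yamabe representative $\bar g\in[g]$ with $R_{\bar g}$ a positive constant. Since $\Delta_{\bar g}R_{\bar g}=0$, the $Q$-curvature formula combined with the trace--traceless decomposition $|\mathrm{Ric}_{\bar g}|^2=|E_{\bar g}|^2+\tfrac14 R_{\bar g}^2$, where $E_{\bar g}=\mathrm{Ric}_{\bar g}-\tfrac14 R_{\bar g}\bar g$, reduces to the pointwise identity
\[ Q_{\bar g}=\tfrac{1}{24}R_{\bar g}^2-\tfrac12|E_{\bar g}|^2_{\bar g}. \]
Integrating over $M$, using conformal invariance of $\kappa_g$ in dimension four and $R_{\bar g}^2\,\mathrm{vol}_{\bar g}(M)=Y([g])^2$ for the Yamabe minimizer,
\[ \kappa_g=\kappa_{\bar g}=\tfrac{1}{24}Y([g])^2-\tfrac12\int_M|E_{\bar g}|^2_{\bar g}\,dv_{\bar g}\leq \tfrac{1}{24}Y([g])^2, \]
and the sharp Aubin bound $Y([g])\leq Y(S^4)=8\pi\sqrt{6}$ then yields $\kappa_g\leq \tfrac{1}{24}Y(S^4)^2=16\pi^2$.

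For rigidity, equality $\kappa_g=16\pi^2$ forces $E_{\bar g}\equiv 0$ (so $\bar g$ is Einstein) together with $Y([g])=Y(S^4)$, which by the Aubin--Schoen rigidity identifies $(M,g)$ as conformal to $(S^4,g_{\mathrm{round}})$. Alternatively, once $\kappa_g=16\pi^2$ is established, the rigidity follows directly from the corollary immediately preceding the theorem, which handles exactly this equality case by applying Corollary \ref{PEthm4d} to the blowup $\hat g=e^{2G_p}g$. The main obstacle is the sharp Yamabe bound $Y([g])\leq Y(S^4)$: this is the classical Aubin--Schoen inequality, whose proof beyond Aubin's direct analytic work in the subcritical cases rests on Schoen's positive mass theorem, i.e.\ precisely the lower-order analog of the fourth-order positive energy machinery developed in this paper, so that in spirit the argument belongs to the same family as the preceding results.
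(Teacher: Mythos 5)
Your argument is correct, and the computations check out: with the paper's normalization $Q_{\bar g}=-\tfrac16\Delta R_{\bar g}-\tfrac12|\mathrm{Ric}_{\bar g}|^2+\tfrac16 R_{\bar g}^2$, a constant-scalar-curvature Yamabe minimizer gives $\kappa_g=\tfrac{1}{24}Y([g])^2-\tfrac12\int_M|E_{\bar g}|^2\,dv_{\bar g}$, and $\tfrac{1}{24}Y(S^4)^2=\tfrac{1}{24}\cdot 384\pi^2=16\pi^2$ (note you are implicitly using the standard normalization $\mathrm{vol}^{(n-2)/n}$ of the Yamabe quotient, which is the correct one). However, this is essentially the classical route to Gursky's Theorem B, and it is quite different from what the paper does. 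The paper's stated purpose in this subsection is to give an \emph{independent} proof that does not pass through the Yamabe minimizer: it blows up at a point using the Green's function $G_p$ of the Paneitz operator (so that $Q_{\hat g}\equiv 0$ on $\hat M=M\setminus\{p\}$), simultaneously uses the conformal Laplacian's Green's function to produce an AE background $\tilde g=G_L^2 g$, and then integrates $P_{\tilde g}\Phi+Q_{\tilde g}=0$ with $\Phi=G_p-\ln G_L$ to obtain the identity $\int_{\hat M}|\mathrm{Ric}_{\tilde g}|^2\,dV_{\tilde g}=8\omega_3\bigl(1-\tfrac{\kappa_g}{16\pi^2}\bigr)$, from which both the inequality and the rigidity drop out. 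Your approach is shorter but imports the full resolution of the Yamabe problem in dimension four (existence of minimizers plus the Aubin--Schoen rigidity $Y([g])=Y(S^4)\Rightarrow$ conformal sphere), i.e.\ machinery that itself rests on Schoen's positive mass theorem and is external to this paper; the paper's approach needs only the existence and expansion of the two Green's functions for $Y([g])>0$ and its own fourth-order rigidity corollary, so it stays inside the framework being advertised. Your fallback for the equality case (invoking the corollary immediately preceding the theorem) is exactly what the paper does, so that part coincides.
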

\begin{proof}
Since we need only pay attention to the cases $\kappa_g>0$, let us start by noticing that, due to Theorem A in \cite{Gursky}, under our hypotheses $\mathrm{Ker}(P_g)=\mathbb{R}$. Furthermore, the case $Y([g])=0$ is also trivial, since in this case $\kappa_g\leq 0$, thus we will assume $Y([g])>0$. Since our hypotheses are conformally invariant, let us start assuming that $g$ has been picked within $[g]$ so as to be the metric of a conformal normal coordinate system. We can now appeal to Lemma \ref{Greenfunction4d.0} and the expansion (\ref{Greenfunction4d.1}) to construct the manifold $(\hat{M}=M\backslash\{p\},\hat{g})$. Alhtough in general this manifold will not be AE, it holds that $Q_{\hat{g}}\equiv 0$. Also, since $Y([g])>0$, we now that the Green function $G_{L}$ of the conformal Laplacian (with a pole at $p\in M$) exists, it is positive and therefore we can construct the metric $\tilde{g}\doteq G^{2}_{L}g$ on $\hat{M}$. In inverted normal coordinates around $p$, from \cite{Lee-Parker}, we know that 
\begin{align*}
\tilde{g}(z)(\partial_{z^i},\partial_{z^j})=\delta_{ij}+O_4(\rho^{-2}),
\end{align*}  
which shows that $(\hat{M},\tilde{g})$ is AE of order $\tau=2$. Furthermore, $\hat{g}$ and $\tilde{g}$ are related via
\begin{align*}
\hat{g}=e^{-\ln(G^2_L)}e^{2G_p}\tilde{g}=e^{2(G_p-\ln(G_L))}\tilde{g}
\end{align*}
and therefore, from the conformal covariance associate to the Paneitz operator, we see that
\begin{align}
0=e^{4\Phi}Q_{\hat{g}}=P_{\tilde{g}}\Phi + Q_{\tilde{g}},
\end{align}
where we have defined $\Phi\doteq G_{P} - \ln(G_L)$. Following the proof theorem \ref{PEthm}, we know that $Q_{\tilde{g}}=-\frac{1}{2}|\mathrm{Ric}_{\tilde{g}}|^2_{\tilde{g}}$, and therefore we find that, for $\rho$ large enough,
\begin{align}
\int_{D_{\rho}}\frac{1}{2}|\mathrm{Ric}_{\tilde{g}}|^2_{\tilde{g}}dV_{\tilde{g}}=\int_{S_{\rho}}\tilde{g}(\tilde{\nabla}\Delta_{\tilde{g}}\Phi,\tilde{\nu})d\tilde{\omega} + 2\int_{S_{\rho}}\mathrm{Ric}_{\tilde{g}}(\tilde{\nabla}\Phi,\tilde{\nu})d\tilde{\omega}.
\end{align}
Now, the main difference with respect to theorem \ref{PEthm} is that we cannot estimate the derivatives of $\Phi$ in the same way, since it does not solve the same equation as in that proof. Nevertheless, in this case, we have an explicit expression for $\Phi$, at least asymptotically. Thus, let us notice that for sufficiently large $\rho$ the following holds
\begin{align*}
\Phi&=\ln(\rho^{2\alpha}) + S_0 + O_{4}(\rho^{-1}) - \ln(\rho^{2}+A+O_4(\rho^{-1})),\\
&=\ln(\rho^{2\alpha}) + S_0  - \ln(\rho^{2}) - \ln(1+A\rho^{-2}+O_4(\rho^{-3})) + O_{4}(\rho^{-1}),\\
&=\ln(\rho^{2(\alpha-1)}) + S_0 - \ln(1+A\rho^{-2}+O_4(\rho^{-3})) + O_4(\rho^{-1}),
\end{align*}
where above we have defined $\alpha\doteq \frac{\kappa_g}{16\pi^2}$ and both $A$ and $S_0$ are constants. From all this we can directly compute that
\begin{align*}
\tilde{\nabla}_i\Phi&=2(\alpha-1)\frac{z^{i}}{\rho^2}+O_3(\rho^{-2}),\\
\Delta_{\tilde{g}}\Phi&=\frac{4(\alpha-1)}{\rho^2} + O_2(\rho^{-3}),\\
\tilde{\nabla}_i\Delta_{\tilde{g}}\Phi&=-\frac{8(\alpha-1)}{\rho^3}\frac{z^i}{\rho} + O_2(\rho^{-4}).
\end{align*}
From the above expression, it follows that
\begin{align*}
\mathrm{Ric}_{\tilde{g}}(\tilde{\nabla}\Phi,\tilde{\nu})&=o(\rho^{-3}),\\
\tilde{g}(\tilde{\nabla}\Delta_{\tilde{g}}\Phi,\tilde{\nu})&=\frac{8(1-\alpha)}{\rho^3} + O_1(\rho^{-4}),
\end{align*}
implying that
\begin{align}
\int_{D_{\rho}}\frac{1}{2}|\mathrm{Ric}_{\tilde{g}}|^2_{\tilde{g}}dV_{\tilde{g}}=8\omega_3(1-\alpha)+o(1).
\end{align}
Finally, passing to the limit as $r$ goes to infinity, we find that
\begin{align}
\int_{\hat{M}}|\mathrm{Ric}_{\tilde{g}}|^2_{\tilde{g}}dV_{\tilde{g}}=8\omega_3(1-\alpha)\geq 0.
\end{align}
This implies that $\alpha\leq 1$ and the equality case has already been dealt with in the previous corollary, which establishes the theorem.

\end{proof}

\subsection{The 3-dimensional case}
In this section, we briefly explain how  to recover the following theorem due to Hang and Yang \cite{hangyang3}.

\begin{thm}[Proposition 2.4 \cite{hangyang3}]
Assume the Yamabe invariant $Y([g]) > 0$, $\ker P_g = 0$. If there is some $p\in M$ such that $G_{p}\doteq G_{P_g}(p,\cdot) < 0$ on $M\backslash\{p\}$, then $G_{p}(p)< 0$ except when $(M,g)$ is conformally equivalent to the standard $S^3$.
\end{thm}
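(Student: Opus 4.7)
The strategy is to mimic the blow-up construction of Proposition \ref{blowup.0} and reduce the claim to an application of Corollary \ref{PEthm4d}, which in dimension three gives immediate rigidity for AE manifolds with nonnegative $Q$-curvature and positive Yamabe invariant. The idea is that when $G_p(p)=0$ the function $u=-G_p$ plays exactly the role that the Paneitz Green function plays in the higher-dimensional $Q$-curvature positive mass theorem, producing a flat AE blow-up.

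First I would make the expansion of $G_p$ near $p$ explicit. Since the principal part of $P_g$ is $\Delta_g^2$ and the fundamental solution of $\Delta^2$ on $\mathbb{R}^3$ is $-r/(8\pi)$, working in $g$-normal (or conformal normal) coordinates $\{x^i\}$ centered at $p$ one obtains an expansion
\[ G_p(x)=-\frac{r}{8\pi}+A+O(r^2), \]
where $A\in\mathbb{R}$ is the value of the regular part at $p$ --- what the statement means by $G_p(p)$, and what plays the role of the Paneitz mass in dimension three. If $A>0$, then $G_p>0$ on a punctured neighborhood of $p$, contradicting the hypothesis; hence $A\leq 0$, and the theorem reduces to showing that $A=0$ forces $(M,g)$ to be conformally equivalent to the round $S^3$.

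Assuming $A=0$, set $u\doteq -G_p$, which is positive on $\hat M\doteq M\setminus\{p\}$ and vanishes linearly at $p$. Define the conformal blow-up $\hat g\doteq u^{-4}g$ on $\hat M$, corresponding to the critical conformal weight $4/(n-4)=-4$ at $n=3$. Three things must be verified. \emph{(i)} $(\hat M,\hat g)$ is AE of some order $\tau>0$: introducing inverted coordinates $z=x/r^2$ near $p$ and using $u(x)=r/(8\pi)+O(r^2)$ (where $A=0$ is crucial, as $A<0$ would give a bounded nonvanishing $u$ at $p$ and would not produce an AE end), a computation parallel to Proposition \ref{blowup.0}, followed by a constant affine rescaling $\bar z=(8\pi)^2 z$ to absorb the overall conformal factor, yields an expansion $\hat g_{ij}(\bar z)=\delta_{ij}+O_4(|\bar z|^{-1})$, which more than suffices since $\tau_3=0$. \emph{(ii)} $Q_{\hat g}\equiv 0$ on $\hat M$: the conformal covariance of the Paneitz operator in dimension three reads $Q_{\hat g}=-2u^{7}P_g u$, and combined with $P_g u=-P_g G_p=-\delta_p$ this gives $Q_{\hat g}\equiv 0$ away from $p$. \emph{(iii)} $Y([\hat g])>0$ in the AE sense: the hypothesis $Y([g])>0$ on the closed manifold $M$ gives a positive Green's function $G_{L_g}$ of the conformal Laplacian, and $G_{L_g}^{4}g$ is a scalar-flat AE representative of the conformal class $[\hat g]$ on $\hat M$, witnessing the required positivity.

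With items (i)--(iii) in hand, Corollary \ref{PEthm4d} applies to $(\hat M,\hat g)$ and yields that $(\hat M,\hat g)$ is isometric to $(\mathbb{R}^3,\delta)$. Reversing the conformal construction, $(M,g)$ is the one-point conformal compactification of flat $\mathbb{R}^3$, i.e., conformally equivalent to the standard $S^3$. The main technical point I expect to be nontrivial is item \emph{(i)}: carefully tracking the inversion so as to confirm that the vanishing $A=0$ produces the Euclidean leading order rather than a bounded but non-Euclidean conformal factor, and extracting explicit decay $\tau>0$ for the subleading terms, which involve the curvature of $g$ at $p$ and the higher-order coefficients of $G_p$. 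Transferring the Yamabe positivity from $(M,g)$ to $(\hat M,\hat g)$ in item \emph{(iii)} is the other step requiring some care, and is most robustly handled by the scalar-flat stereographic projection argument sketched above.
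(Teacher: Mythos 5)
Your overall architecture (blow up at $p$, show the blown-up metric is AE with $Q\equiv 0$ and a scalar-flat conformal representative, invoke Corollary \ref{PEthm4d}) is reasonable, but there is a genuine gap at the very first step: the asserted expansion $G_p=-\tfrac{r}{8\pi}+A+O(r^2)$. The regular part $h\doteq G_p+\tfrac{r}{8\pi}$ solves $\Delta^2h=O(r^{-1})$ near $p$, which gives $h\in C^{2,\alpha}$ and hence $h(x)=A+a_ix^i+O(r^{1+\alpha})$ with $a=\nabla h(p)$ having no reason to vanish. In dimension three this is fatal for your construction, because the linear term $a_ix^i=O(r)$ enters at exactly the same order as the fundamental-solution term $-r/(8\pi)$ (unlike in $n\geq 5$, where the singular part $\gamma_n r^{4-n}$ dominates every smooth correction, which is why Proposition \ref{blowup.0} goes through there). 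When $A=0$ you get $u=-G_p=\tfrac{r}{8\pi}\left(1-8\pi a_i\omega^i\right)+O(r^{1+\alpha})$ with $\omega=x/r$, so in inverted coordinates the leading term of $\hat g=u^{-4}g$ is $(8\pi)^4(1-8\pi a_i\omega^i)^{-4}\delta_{ij}$: a conformal factor with persistent angular dependence at infinity that no affine rescaling removes. Such a metric is not asymptotically Euclidean unless $a=0$, and $a=0$ is precisely what you cannot assume --- it holds a posteriori once the manifold is known to be conformally round, but that is the conclusion. The hypothesis $G_p<0$ only forces $|a|\leq 1/(8\pi)$, not $a=0$. So step \emph{(i)} fails as written and Corollary \ref{PEthm4d} cannot be invoked.

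The paper's proof is designed to be insensitive to this term. It takes the AE structure from the conformal Laplacian, $\tilde g=G_L^4g$ (whose singular part $1/r$ dominates all smooth corrections), writes $\hat g=\Phi^{-4}\tilde g$ with $\Phi=G_pG_L<0$, and integrates the identity $P_{\tilde g}\Phi=0$ over large domains. Only the crude expansion $G_p=A+O_4(r)$ is needed to evaluate the boundary terms, yielding $\int_{\hat M}|\mathrm{Ric}_{\tilde g}|^2_{\tilde{g}}\,\Phi\, dV_{\tilde g}=8\pi A$; since $\Phi<0$ this gives $A\leq 0$ and, in the equality case, $\mathrm{Ric}_{\tilde g}\equiv 0$, hence flatness and conformal roundness, all in one stroke. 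To salvage your route you would first have to prove $a=0$ when $A=0$, e.g.\ by a Pohozaev-type boundary-integral argument --- at which point you would essentially have reproduced the paper's integral identity anyway.
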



In this case the proof runs along the same lines as in the previous theorem. In particular, its hypotheses and conclusions are conformally invariant. Thus, we can assume that $g$ is the metric of a conformal normal coordinate system. Then, let $\hat{M}=M\backslash\{p\}$ and define $\hat{g}\doteq G^{-4}_{p}g$ and $\tilde{g}=G^4_{L}g$ on this manifold, where, as in the proof of the previous theorem, $G_L$ denotes the Green function of the conformal Laplacian with a pole at $p$. Let us recall the following expansions, valid in conformal normal coordinates around $p$ (see, \cite{hangyang3} and \cite{Lee-Parker})
\begin{align*}
G_p&=A+O_4(r),\\
G_L&=\frac{1}{r}+\alpha+O_4(r),
\end{align*}
where $A$ and $\alpha$ are constants. After going to inverted coordinates $z=\frac{x}{r}$, with $\rho=r^{-1}$, we   find that
\begin{align*}
\tilde{g}_{ij}(z)=\left(1+\frac{\alpha}{\rho}\right)\delta_{ij} +O(\rho^{-2}),
\end{align*}
and, clearly, on $\hat{g}=\Phi^{-4}\tilde{g}$, with $\Phi\doteq G_PG_L$ and $Q_{\hat{g}}\equiv 0$. Therefore, from the conformal covariance of the Paneitz operator, we find that $P_{\tilde{g}}\Phi\equiv 0$, which translates to
\begin{align*}
0=\Delta^2_{\tilde{g}}\Phi + 4\mathrm{div}_{\tilde{g}}\left(\mathrm{Ric}_{\tilde{g}}(\nabla \Phi,\cdot) \right)+|\mathrm{Ric}_{\tilde{g}}|^2_{\tilde{g}}\Phi.
\end{align*} 
That is
\begin{align}
\int_{D_{\rho}}|\mathrm{Ric}_{\tilde{g}}|^2_{\tilde{g}}\Phi dV_{\tilde{g}}=-\int_{S_{\rho}}\tilde{g}(\tilde{\nabla}\Delta_{\tilde{g}}\Phi,\tilde{\nu})d\tilde{\omega}_{\rho} - 4\int_{S_{\rho}}\mathrm{Ric}_{\tilde{g}}(\nabla\Phi,\tilde{\nu})d\tilde{\omega}_{\rho}.
\end{align}
As in the previous theorem, we now can compute explicitly the terms in the right-hand side. That is, 
\begin{align*}
\Phi&=\left(A+O_4(\rho^{-1}) \right)\left(\rho + \alpha +O_4(\rho^{-1}) \right)=A\rho + O_4(\rho^{0}) \\
\nabla_i\Phi&=A\frac{z^i}{\rho} + O_3(\rho^{-1}),\\
\Delta_{\tilde{g}}\Phi&=\frac{2A}{\rho} + O_2(\rho^{-2}),\\
\nabla_i\Delta_{\tilde{g}}\Phi&=-2A\frac{z^i}{\rho^{3}} + O_1(\rho^{-3}),
\end{align*}
which, together with $\mathrm{Ric}_{\tilde{g}}=O_2(\rho^{-3})$ implies that
\begin{align*}
\int_{D_{\rho}}|\mathrm{Ric}_{\tilde{g}}|^2_{\tilde{g}}\Phi dV_{\tilde{g}}=8\pi A + O(\rho^{-1}).
\end{align*}
Passing to the limit and remembering that $\Phi<0$, we find that $A\leq 0$ with equality holding iff $\mathrm{Ric}_{\tilde{g}}\equiv 0$. That is, if $\hat{M}\cong \mathbb{R}^3$ and $\tilde{g}=\delta$, which implies the final result.


\appendix
\markboth{Appendix}{Appendix}
\renewcommand{\thesection}{\Alph{section}}
\numberwithin{equation}{section}
\numberwithin{thm}{section}
\numberwithin{coro}{section}
\numberwithin{remark}{section}

\section{Appendix: Some analytic results concerning AE manifolds}

In this appendix we will collect some facts results concerning AE manifolds which are used in the core of the paper. Most of these results are well-known for experts. We include them for the sake completeness and to deliver a self-contained presentation. For detailed proofs and discussions on these topics, we refer the reader to Bartnik \cite{Bartnik}, Lee-Parker \cite{Lee-Parker}.

Let us start with the following fundamental theorem regarding the properties of the Laplacian on AE manifolds.
    \begin{thm}
        \label{deltag}
        Let $(M,g)$ an asymptotically Euclidean manifold with a structure at infinity $\phi :M\setminus K \rightarrow \R^n\setminus B_1$  with decay rate $\tau$. If $\delta \not\in (\mathbb{Z} \setminus \{-1,\cdots,3-n\})$ and $q>1$ then
        $$\Delta_g: W_\delta^{2,q}(\phi) \rightarrow L^q_{\delta-2}$$
        is Fredholm. Moreover 
        \begin{equation}
        \left\{
        \begin{array}{l}
        \hbox{ if } \delta >2-n \hbox{ then } \Delta_g \hbox{ is surjective,}\\
        \hbox{ if } 2-n<\delta <0 \hbox{ then } \Delta_g \hbox{ is bijective,}\\
        \hbox{ if } \delta <0 \hbox{ then } \Delta_g \hbox{ is injective.}
        \end{array}
        \right.
        \end{equation}
    \end{thm}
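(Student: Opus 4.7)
The plan is to reduce the question to the flat case on $\mathbb{R}^n$ and then transfer it to the AE setting by a partition-of-unity / parametrix argument. I would proceed in four main stages.

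First, I would establish the basic weighted elliptic estimate on $(\mathbb{R}^n,\delta)$. The standard trick is to cover $\mathbb{R}^n \setminus B_1$ by dyadic annuli $A_k = \{2^{k} < |x| < 2^{k+2}\}$, rescale each annulus to the unit annulus $A_0$ via $x \mapsto 2^{-k}x$, apply the classical interior Calder\'on–Zygmund / Schauder estimate on $A_0$, and sum the contributions weighted by $\sigma^{-\delta q - n}$. This produces the coercive inequality
\begin{equation*}
\|u\|_{W^{2,q}_\delta} \leq C\bigl(\|\Delta u\|_{L^q_{\delta-2}} + \|u\|_{L^q_\delta}\bigr),
\end{equation*}
valid for \emph{every} $\delta \in \mathbb{R}$ and $q>1$. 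The main point is that scaling exactly balances the weights because $\Delta$ has homogeneity $-2$.

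Second, I would upgrade this to the \emph{semi-Fredholm estimate}
\begin{equation*}
\|u\|_{W^{2,q}_\delta} \leq C\,\|\Delta u\|_{L^q_{\delta-2}},
\end{equation*}
valid precisely when $\delta$ avoids the indicial roots of $\Delta$ on $\mathbb{R}^n$. These roots are found by separation of variables: solutions $r^\lambda Y(\theta)$ of $\Delta u = 0$ force $\lambda \in \{0,1,2,\dots\} \cup \{2-n,1-n,\dots\}$, which is exactly the complement of the allowed set $\mathbb{Z}\setminus\{-1,\dots,3-n\}$. The passage from the coercive inequality to the semi-Fredholm one is by contradiction and concentration-compactness: a sequence that violates it, after rescaling and passing to a limit, produces a non-trivial weighted harmonic function whose behavior at $0$ and $\infty$ is incompatible with the indicial roots unless $\delta$ is one of them. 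I expect this to be the main technical obstacle — one must handle the possible loss of mass both at infinity and near the origin, and identify the limit via a Liouville-type classification of harmonic functions with polynomial growth.

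Third, I would transfer the semi-Fredholm estimate to $\Delta_g$ on $(M,g)$. The decay $g_{ij} - \delta_{ij} \in W^{k,q}_{-\tau}$ means that on the asymptotic end
\begin{equation*}
\Delta_g - \Delta = (g^{ij}-\delta^{ij})\partial_i\partial_j + b^i(g)\partial_i
\end{equation*}
is a perturbation whose coefficients decay, so multiplying by a cutoff $\chi_R$ supported in $\{|x|>R\}$ makes its operator norm $W^{2,q}_\delta \to L^q_{\delta-2}$ arbitrarily small. Combined with interior elliptic estimates on the compact core, this yields
\begin{equation*}
\|u\|_{W^{2,q}_\delta(M)} \leq C\bigl(\|\Delta_g u\|_{L^q_{\delta-2}(M)} + \|u\|_{L^q(B_R)}\bigr)
\end{equation*}
for some fixed ball. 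Since $W^{2,q}_\delta(M) \hookrightarrow L^q(B_R)$ is compact, the classical abstract Fredholm criterion (e.g. Lax–Milgram + Rellich, or Peetre's lemma) gives that $\Delta_g$ has closed range and finite-dimensional kernel. Applying the same argument to the formal adjoint on the dual weighted space, which is again of Laplace-type, yields finite-dimensional cokernel, hence the Fredholm property.

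Finally, for the sign-specific statements I would use the maximum principle and duality. For $\delta < 0$, any $u \in W^{2,q}_\delta$ with $\Delta_g u = 0$ tends to $0$ at infinity; by the Hopf / strong maximum principle on the AE manifold it must vanish identically, giving injectivity. For $\delta > 2-n$, the cokernel is characterized as the kernel of the formal adjoint acting on the dual weighted space; under the $L^2$ pairing $\int uv\,dV_g$, this dual space corresponds to weight $2-n-\delta < 0$, so the injectivity result just established forces the cokernel to be trivial, giving surjectivity. The intermediate range $2-n<\delta<0$ then yields bijectivity. One subtle point to verify is the regularity of cokernel elements (an elliptic bootstrap from a distributional solution in a weighted $L^{q'}$ space into $W^{2,q'}_{2-n-\delta}$), which follows from the interior regularity plus the weighted estimates of stage one.
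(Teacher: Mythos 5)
The paper does not actually prove this theorem: it is stated in Appendix A as background and the reader is referred to Bartnik and Lee--Parker for proofs, so there is no in-paper argument to compare against. Your outline is essentially the standard proof from that literature (scaled interior estimates on dyadic annuli, identification of the exceptional weights with the growth rates of homogeneous harmonics, perturbation to $\Delta_g$ plus Peetre's lemma for semi-Fredholmness, and duality $\left(L^q_{\delta-2}\right)^* \cong L^{q'}_{2-n-\delta}$ combined with the maximum principle for the injectivity/surjectivity dichotomy), and the architecture is sound. One step is misstated: the ``semi-Fredholm estimate'' $\|u\|_{W^{2,q}_\delta}\leq C\|\Delta u\|_{L^q_{\delta-2}}$ cannot hold for \emph{all} non-exceptional $\delta$ -- for any non-exceptional $\delta>0$ the harmonic polynomials of degree less than $\delta$ (e.g.\ the constants) lie in $W^{2,q}_\delta(\R^n)$ and violate it. What is true, and what your argument actually needs, is either the isomorphism statement for the flat Laplacian on the non-exceptional range $2-n<\delta<0$ or the scale-broken estimate $\|u\|_{W^{2,q}_\delta}\leq C\bigl(\|\Delta u\|_{L^q_{\delta-2}}+\|u\|_{L^q(B_R)}\bigr)$ for general non-exceptional $\delta$; since your third stage derives and uses precisely the latter, the Fredholm conclusion survives. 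Two smaller points worth making explicit: before invoking the maximum principle for injectivity at $\delta<0$ you need an elliptic bootstrap to conclude that a $W^{2,q}_\delta$-harmonic function actually tends to zero pointwise at infinity (Sobolev embedding alone does not give this when $2q\leq n$), and the formal adjoint with respect to the pairing you choose should be checked to be again a Laplace-type operator with decaying lower-order terms so that the same injectivity argument applies to it.
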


\begin{remark}
The decay rates in the set $\mathbb{Z} \setminus \{-1,\cdots,3-n\}$ are called \textbf{exceptional} and we say that $\delta$ is \textbf{non-exceptional} if $\delta\not\in \mathbb{Z} \setminus \{-1,\cdots,3-n\}$. 
\end{remark}
Let us now analyse the relation between different potential structures of infinity. In particular, the following theorem concerns the existence of harmonic coordinates.

\begin{thm}
Let $(M,g)$ be an AE manifold, with $g\in W^{k,q}_{loc}$, $k\geq 1$, $q>n$, and $(\Phi,x):M\backslash K\mapsto E_R\doteq \mathbb{R}^n\backslash\overline{B_R(0)}$ where $K\subset\subset M$, $R\geq 1$ is a structure of infinity of order $\tau> 0$ with $1-\tau$ non-exceptional, and fix $1<\eta<2$. There are functions $y^{i}\in W^{k,q}_{\eta}$, $i=1,\cdots,n$, such that $\Delta_gy^i=0$ and $(x^i-y^i)\in W^{k,q}_{1-\tau^{*}}(E_R)$ for $\tau^{*}\doteq \min\{\tau,n-2\}$, which implies
\begin{align}\label{harmoniccoord1}
\begin{split}
|x^i-y^i|&=o_k(r^{1-\tau^{*}}),\\
|g(\partial_{x^i},\partial_{x^j})-g(\partial_{y^i},\partial_{y^j})|&=o_k(r^{-\tau^{*}}).
\end{split}
\end{align}
Furthermore, the set of functions $\{1,y^i\}$ is a basis for $H_1=\{u\in W^{k,q}_{\eta}: \Delta_gu=0\}$. 
\end{thm}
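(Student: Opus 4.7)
The plan is to construct the $y^i$ by perturbing the asymptotic coordinates $x^i$, using the mapping properties of $\Delta_g$ from Theorem \ref{deltag}. As a first step, I would fix a smooth cutoff $\chi\in C^\infty(M)$ with $\chi\equiv 0$ on a neighborhood of $K$ and $\chi\equiv 1$ outside a slightly larger compact set, and consider the globally defined extensions $\tilde x^i\doteq \chi\,x^i$. Since $g_{ij}=\delta_{ij}+O_4(r^{-\tau})$ in the structure at infinity, the Christoffel symbols satisfy $\Gamma^k_{ij}(g)=O_3(r^{-\tau-1})$ and therefore, outside the support of $d\chi$,
\begin{equation*}
\Delta_g \tilde x^i = -g^{jk}\Gamma^i_{jk} = O_2(r^{-\tau-1}),
\end{equation*}
so $f^i\doteq \Delta_g\tilde x^i$ is compactly supported plus a term in $L^q_{\delta-2}$ for every $\delta>1-\tau$. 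In particular $f^i\in L^q_{-1-\tau^*}=L^q_{(1-\tau^*)-2}$ for $\tau^*=\min\{\tau,n-2\}$.

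Next I would invoke Theorem \ref{deltag} for the weight $\delta=1-\tau^*$. By the hypothesis that $1-\tau$ is non-exceptional (and the fact that $1-\tau^*$ is either equal to $1-\tau$ or equal to $3-n$, which belongs to the non-exceptional range), the operator $\Delta_g:W^{2,q}_{1-\tau^*}\to L^q_{-1-\tau^*}$ is Fredholm; the constraint $\tau^*<n-1$ places $1-\tau^*$ above $2-n$, so $\Delta_g$ is surjective and one obtains a (generally non-unique) solution $z^i\in W^{2,q}_{1-\tau^*}$ to $\Delta_g z^i=f^i$. Standard weighted elliptic regularity, applied to $z^i$ using the $W^{k,q}_{loc}$ regularity of $g$, upgrades this solution to $W^{k,q}_{1-\tau^*}$. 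I then set
\begin{equation*}
y^i \doteq \tilde x^i - z^i,
\end{equation*}
so that $\Delta_g y^i=0$ and $x^i-y^i=z^i\in W^{k,q}_{1-\tau^*}(E_R)$. The pointwise estimates \eqref{harmoniccoord1} follow from the weighted Sobolev embedding $W^{k,q}_\delta\hookrightarrow C^0_\delta$ (valid because $kq>n$), differentiating $g(\partial_{y^i},\partial_{y^j})=g(\partial_{x^k},\partial_{x^l})\,\partial_{y^i}x^k\partial_{y^j}x^l$ and using that the Jacobian of $y\mapsto x$ differs from the identity by $o(r^{-\tau^*})$. Since $y^i=\tilde x^i-z^i$ with $\tilde x^i\in W^{k,q}_\eta$ for any $\eta>1$ and $z^i\in W^{k,q}_{1-\tau^*}\subset W^{k,q}_\eta$, we also have $y^i\in W^{k,q}_\eta$ for the chosen $\eta\in(1,2)$.

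For the final assertion, I would argue that $\{1,y^1,\dots,y^n\}$ is a basis for $H_1$. Linear independence is clear: any nontrivial linear combination $a_0+a_iy^i$ behaves at infinity like $a_0+a_ix^i$, which can vanish identically only if all $a_\alpha=0$. To show these $n+1$ functions span $H_1$, let $u\in H_1$. Since $u\in W^{k,q}_\eta$ with $\eta<2$, I would show that the $n+1$ linear functionals extracting the constant term and the $n$ linear growth coefficients of $u$ at infinity are well defined; subtracting the corresponding combination of $1$ and the $y^i$'s yields a function $v\in H_1$ which lies in $W^{k,q}_{\eta'}$ for some $\eta'<0$ (still non-exceptional and above $2-n$). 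By the injectivity statement in Theorem \ref{deltag} for $\delta<0$, $v\equiv 0$, proving the spanning property. The main obstacle is the last step: the rigorous extraction of the asymptotic coefficients and the verification that the remainder drops to the injective weight range; this is where one must combine weighted elliptic regularity with an expansion for harmonic functions with respect to the Euclidean Laplacian (justified because $\Delta_g$ differs from $\Delta_\delta$ by lower-order terms with decay controlled by $\tau$), an argument of Bartnik-type that needs care with the non-exceptional hypothesis on $1-\tau$.
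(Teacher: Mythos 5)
Your construction of the $y^i$ is essentially the paper's: extend the $x^i$ to all of $M$, observe that $\Delta_g x^i=-g^{jk}\Gamma^i_{jk}$ decays one order faster than $r^{-\tau}$, solve away this error at the weight $1-\tau^{*}$ using the surjectivity of $\Delta_g:W^{2,q}_{\delta}\to L^q_{\delta-2}$ for $\delta>2-n$ (with $1-\tau^{*}\geq 3-n$ non-exceptional), and set $y^i=x^i-v^i$; the pointwise estimates then follow from weighted Sobolev embedding exactly as in the paper. Two remarks. First, a small integrability point: from the pointwise bound $\Gamma^i=O(r^{-\tau-1})$ alone you only get $f^i\in L^q_{\delta-2}$ for $\delta$ \emph{strictly} greater than $1-\tau$, so in the case $\tau\leq n-2$ (where $\tau^{*}=\tau$) the claimed membership $f^i\in L^q_{-1-\tau^{*}}$ is borderline and does not follow; the clean fix is to use that the structure at infinity gives $g-\delta\in W^{k,q}_{-\tau}$, hence $\Gamma^i\in W^{k-1,q}_{-\tau-1}\subset L^q_{-1-\tau}$, which is what the paper does. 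Second, for the basis claim the paper takes a shorter route: $\{1,y^i\}$ are visibly $n+1$ linearly independent elements of $H_1$, and Proposition 2.2 of Bartnik gives $\dim H_1=n+1$, so they form a basis. Your plan --- extracting the constant and linear-growth coefficients of an arbitrary $u\in H_1$ and pushing the remainder into the injective weight range --- is in substance the proof of that proposition; it is the right idea, but you explicitly leave the extraction of the asymptotic coefficients as an unresolved obstacle, so as written the spanning step is incomplete. Either carry out that expansion argument (it requires the asymptotic expansion theorem for $\Delta_g$-harmonic functions of subquadratic growth) or simply quote the dimension count, which is the only ingredient the paper's proof actually needs.
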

\begin{proof}
Let us first extend the functions $x^i$ smoothly to all of $M$. Then, near infinity, $\Delta_gx^i=g^{kl}\Gamma^i_{kl}\doteq \Gamma^i\in W^{k-1,q}_{-1-\tau}$. Notice that if $1-\tau>2-n$, then Theorem \ref{deltag} implies the existence of $v^i\in W^{k+1,q}_{1-\tau}$ solving $\Delta_gv^i=\Gamma^i$. Now, if $1-\tau< 2-n$ (the equality case is an exceptional case), we cannot, a priori, improve the decay given by $r^{2-n}$. This actually follows from Theorem 1.17 in \cite{Bartnik}. Therefore, in any case, we know that there are solutions $v^i\in W^{k+1,q}_{1-\tau^{*}}$ to $\Delta_gv^i=\Gamma^i$, where $\tau^{*}=\min\{\tau,n-2\}$. Therefore, there is some $v^i\in W^{k+1,q}_{1-\tau^{*}}$ satisfying $\Delta_g(x^i-v^i)=0$. Let us then define $y^i\doteq x^i-v^i$, which implies the first estimate in (\ref{harmoniccoord1}), and furthermore, since $\frac{\partial y^i}{\partial x^j}=\delta^i_j + o(r^{-\tau^{*}})$, we see that near infinity $\{y^i\}$ are coordinates which are asymptotically Cartesian. This, in turn, implies the second estimate in (\ref{harmoniccoord1}). The final claim concerning $H_1$ follows since $\{1,y^i\}$ spans an $(n+1)$-dimensional subspace of $H_1$, but from Proposition 2.2 in \cite{Bartnik} $\mathrm{dim}(H_1)=n+1$. 
\end{proof}

\begin{remark}
Let us highlight that the statement of the above theorem is slightly different than the well-known Theorem 3.1 in \cite{Bartnik}. The difference relies in the fact that, in \cite{Bartnik}, it does not seem to be explicitly stated that, a priori, the decay rate $r^{2-n}$ cannot be improved, regardless of how large $\tau$ may be. Nevertheless, as can be seen in the above proof, the above mild correction comes about from results contained in the same reference, as is also clear from a careful examination of the proof of Theorem 3.1 in \cite{Bartnik}. Similar comments apply to the following theorem,  which can be found in \cite{Bartnik} as Corollary 3.2, and presents the relation between two different structures of infinity.
\end{remark}

\begin{thm}\label{Asymptoticcoord}
Let $(M,g)$ an asymptotically flat manifold, $g\in W^{k,q}_{loc}$, $k\geq 1$ and $q>n$, with two structures at infinity $\phi,\psi :M\setminus K \rightarrow \R^n\setminus \overline{B_1(0)}$  with decay rates $\tau_\phi$ and $\tau_\psi$, where of each of these weights satisfies that $1-\tau$ is non-exceptional. There exists $(O,a)\in O(n)\times\R^n$ such that
        $$x^i-(O^i_jz^j +a^i) \in W^{k,q}_{1-\tau}(\R^n)$$
    which implies
		\begin{align}
		\vert x^i-(O^i_jz^j +a^i)\vert=o_{k}(r^{1-\tau}),
		\end{align}
where $\tau\doteq \min\{\tau_\phi,\tau_\psi,n-2\}$, $x=\phi^{-1}$ and $z=\psi^{-1}$.
\end{thm}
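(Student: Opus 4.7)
The plan is to apply the harmonic coordinates theorem separately to each structure at infinity and then exploit the finite‑dimensional kernel description of the Laplacian to relate the two sets of harmonic coordinates by an affine transformation. Finally, the condition that the metric is asymptotically Euclidean in both charts will force the linear part of this transformation to be orthogonal.

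First, I would apply the previous theorem to the structure $\phi$ to produce harmonic functions $y^{1},\dots,y^{n}$ on $M$ satisfying $\Delta_{g}y^{i}=0$ and $x^{i}-y^{i}\in W^{k,q}_{1-\tau^{*}_{\phi}}$, with $\tau^{*}_{\phi}=\min\{\tau_{\phi},n-2\}$. In the same way, starting from the structure $\psi$ with coordinates $z^{i}$, I would produce harmonic functions $w^{1},\dots,w^{n}$ with $z^{i}-w^{i}\in W^{k,q}_{1-\tau^{*}_{\psi}}$, where $\tau^{*}_{\psi}=\min\{\tau_{\psi},n-2\}$. Both families, together with the constants, furnish bases of the space $H_{1}=\{u\in W^{k,q}_{\eta}:\Delta_{g}u=0\}$ for a fixed $1<\eta<2$, which has dimension $n+1$ by the previous theorem. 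Consequently, there exist constants $A^{i}_{j}$ and $c^{i}$ such that
\begin{equation*}
w^{i}=A^{i}_{j}\,y^{j}+c^{i}.
\end{equation*}

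The next step is to show that the matrix $A=(A^{i}_{j})$ lies in $O(n)$. From the harmonic coordinate theorem applied to $\phi$, one has $g(\partial_{y^{i}},\partial_{y^{j}})=\delta_{ij}+o_{k-1}(r^{-\tau^{*}_{\phi}})$ near infinity, and analogously $g(\partial_{w^{i}},\partial_{w^{j}})=\delta_{ij}+o_{k-1}(r^{-\tau^{*}_{\psi}})$. On the overlap of the two ends, the chain rule gives $\partial_{y^{k}}=A^{i}_{k}\partial_{w^{i}}$, hence
\begin{equation*}
\delta_{kl}+o(1)=g(\partial_{y^{k}},\partial_{y^{l}})=A^{i}_{k}A^{j}_{l}\,g(\partial_{w^{i}},\partial_{w^{j}})=A^{i}_{k}A^{j}_{l}\,\delta_{ij}+o(1).
\end{equation*}
Passing to the limit $r\to\infty$ yields $A^{T}A=I$, so $A\in O(n)$.

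The proof is then completed by algebraic bookkeeping. Setting $O\doteq A^{T}\in O(n)$ and $a\doteq -Oc$, the relation above inverts to $y^{i}=O^{i}_{j}w^{j}+a^{i}$. Combining
\begin{equation*}
x^{i}-y^{i}\in W^{k,q}_{1-\tau^{*}_{\phi}},\qquad y^{i}=O^{i}_{j}w^{j}+a^{i},\qquad w^{j}-z^{j}\in W^{k,q}_{1-\tau^{*}_{\psi}},
\end{equation*}
and using that $O$ has constant coefficients (so acting by $O$ preserves the weighted Sobolev spaces), one obtains
\begin{equation*}
x^{i}-(O^{i}_{j}z^{j}+a^{i})\in W^{k,q}_{1-\tau}(\mathbb{R}^{n}),\qquad\tau=\min\{\tau_{\phi},\tau_{\psi},n-2\},
\end{equation*}
with the pointwise decay $|x^{i}-(O^{i}_{j}z^{j}+a^{i})|=o_{k}(r^{1-\tau})$ following from weighted Sobolev embedding.

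I expect the main obstacle to be the rigorous identification of $H_{1}$ with its $(n+1)$‑dimensional basis in a way that simultaneously accommodates both structures at infinity, since the two charts are defined on different complements of compact sets and the identification of a single global kernel requires some care. Once this is in place, the orthogonality of $A$ and the assembly of the final estimate are essentially mechanical.
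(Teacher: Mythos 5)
Your proposal is correct and follows essentially the same route as the paper's proof: construct the harmonic coordinates $y^i$, $w^i$ from the previous theorem, use that $\{1,y^i\}$ and $\{1,w^i\}$ are both bases of the intrinsic $(n+1)$-dimensional space $H_1$ to get an affine relation, deduce $A\in O(n)$ from both systems being asymptotically Cartesian, and then track the weighted-space memberships of the corrections $x^i-y^i$ and $z^i-w^i$. Your explicit chain-rule verification that $A^TA=I$ is a welcome elaboration of a step the paper states only tersely.
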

\begin{proof}
Let $y^i$ and $w^i$ be the harmonic coordinates constructed in the previous theorem associated to $\phi$ and $\psi$ respectively. Then, since $H_1$ is intrinsic to $M$ and $\{1,y^i\}$ and $\{1,w^i\}$ are bases for this space, we get that
\begin{align*}
w^i=A^i_jy^j + a^i,
\end{align*}
where we $A\in \mathrm{GL}(n,\mathbb{R})$ a priori, but actually, from the construction of the previous theorem, we know that $z^i=A^i_jx^j + a^i - A^i_jv^j + \bar{v}^i$, with $v^i\in W^{k,q}_{1-\tau^{*}_{\phi}}$ and $\bar{v}^i\in W^{k,q}_{1-\tau^{*}_{\psi}}$, where $\tau^{*}$ was defined in the previous theorem. Since these last two systems are Cartesian, then $A\in O(n)$ and we explicitly see that $z^i-A^i_jx^j-a^i\in W^{k,q}_{1-\tau}$.
\end{proof}

    The above estimate is the best possible and it is determined by the Ricci curvature as shown by the next theorem which corresponds to Proposition 3.3 in \cite{Bartnik}, see also \cite{DK}.
\begin{thm}
Let $(M,g)$ an asymptotically flat manifold with a structure at infinity $\phi :M\setminus K \rightarrow \R^n\setminus B(0,1)$  with decay rate $\tau$ such that $(\phi_{*} g-\delta)\in W^{2,q}_{-\tau}(\R^n\setminus B(0,1))$ for $q>n$ and such that 
$$Ric_g \in L^q_{-2-\eta}(M) \hbox{ for some } \eta >\tau \hbox{ and } \eta \not\in (\mathbb{Z} \setminus \{-1,\cdots,3-n\}) .$$
Then there exists a structure at infinity $\Theta  :M\setminus K' \rightarrow \R^n\setminus B_1$ such that $(\Theta_* g-\delta)\in W^{2,q}_{-\eta}(\R^n\setminus B_1)$.
\end{thm}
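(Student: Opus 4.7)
I would prove the theorem by a bootstrap argument in harmonic coordinates, driven by the classical quasi-linear elliptic equation that the metric components satisfy in such a chart. Step one: invoke the harmonic coordinate theorem proved just above to pass to coordinates $\{y^i\}$ on the end in which $h_{ij} := g_{ij} - \delta_{ij} \in W^{2,q}_{-\tau^{*}}$ with $\tau^{*} = \min\{\tau, n-2\}$. In harmonic coordinates one has the standard identity
\begin{equation*}
g^{kl} \partial_k \partial_l g_{ij} = -2 R_{ij} + Q(g^{-1}, \partial g),
\end{equation*}
where $Q$ is polynomial in $g^{-1}$ and quadratic in $\partial g$. Reorganizing as $\Delta_g h_{ij} = -2 R_{ij} + F(g, \partial g)$ produces an elliptic equation for $h_{ij}$ whose source is controlled by the Ricci tensor together with nonlinear terms quadratic in first derivatives.

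\textbf{Bootstrap step.} Given a current decay $h_{ij} \in W^{2,q}_{-\sigma}$ with $0 < \sigma \le \eta$ non-exceptional and $-\sigma > 2-n$, the hypothesis $\mathrm{Ric}_g \in L^q_{-2-\eta}$ contributes a source in $L^q_{-2-\eta}$, while $F(g, \partial g)$ contributes to $L^q_{-2-2\sigma}$ (up to an arbitrarily small H\"older loss). Thus the total right-hand side lies in $L^q_{-2-\sigma'}$ with $\sigma' = \min\{\eta, 2\sigma - \varepsilon\}$ for $\varepsilon > 0$ chosen to keep $-\sigma'$ non-exceptional. Applying the isomorphism $\Delta_g : W^{2,q}_{-\sigma'} \to L^q_{-2-\sigma'}$ from Theorem~\ref{deltag}, I obtain $\tilde h_{ij} \in W^{2,q}_{-\sigma'}$ solving the same equation, whence $h_{ij} - \tilde h_{ij}$ is $\Delta_g$-harmonic and $o(r^{-\sigma})$ at infinity. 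By the kernel description $H_1 = \mathrm{span}\{1, y^i\}$ from the harmonic-coordinate theorem, this difference corresponds to an affine ambiguity in the chart, which I absorb via Theorem~\ref{Asymptoticcoord} by re-choosing the structure of infinity. This yields a new chart in which $h \in W^{2,q}_{-\sigma'}$.

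\textbf{Iteration and termination.} Iterating this step, $\sigma$ at least doubles at each stage (capped by $\eta$), so after finitely many rounds the decay reaches $\sigma = \eta$. Non-exceptionality of $\eta$ is precisely what is required for the final application of the isomorphism with target weight $-\eta$; intermediate exceptional values are sidestepped by taking $\sigma'$ slightly below the natural doubling, the $\varepsilon$-losses being recovered in the final step since $\eta > \sigma_{k-1}$ after enough iterations. The resulting chart, composed with the original $\phi$ in the compact region, defines the desired structure of infinity $\Theta : M \setminus K' \to \mathbb{R}^n \setminus \overline{B_1(0)}$.

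\textbf{Main obstacle.} The delicate point I expect is the bookkeeping around the exceptional weights: one must verify that the nonlinear term $F(g, \partial g)$ actually lies in the asserted weighted $L^q$ class at each stage (this uses the standard product rule for weighted spaces together with $q > n$ to handle the pointwise control of $g^{-1}$), and one must choose the sequence of intermediate targets $\sigma'$ so as to jump over every exceptional weight in $(\tau^{*}, \eta]$ while still converging to $\eta$ in finitely many steps. The cap $\tau^{*} \le n-2$ from the harmonic-coordinate passage is, by contrast, harmless: past this threshold the nonlinear term $F$ decays faster than Ricci, so the bootstrap is driven entirely by the hypothesis on $\mathrm{Ric}_g$, as expected from the heuristic that curvature controls coordinate-free asymptotic regularity.
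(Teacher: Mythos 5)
The paper offers no proof of this statement---it is quoted from Bartnik (Proposition 3.3) and DeTurck--Kazdan---and the harmonic-coordinate bootstrap you propose is exactly the argument used there, so your overall strategy is the intended one. Two of your worries are in fact non-issues and can be deleted. First, there are no exceptional weights to jump over in the relevant range: by Theorem \ref{deltag} the exceptional integers are precisely those outside $\{-1,\dots,3-n\}$, so every $\delta\in(2-n,0)$ is non-exceptional and $\Delta_g:W^{2,q}_{\delta}\to L^q_{\delta-2}$ is an isomorphism on the whole interval; your $\varepsilon$-losses are unnecessary. Second, the ``affine ambiguity'' step is vacuous: once $\tilde h_{ij}\in W^{2,q}_{-\sigma'}$ solves the same scalar equation (note that in harmonic coordinates $g^{kl}\partial_k\partial_l$ \emph{is} $\Delta_g$ on functions), the difference $h_{ij}-\tilde h_{ij}$ is a $\Delta_g$-harmonic function in $W^{2,q}_{-\sigma}$ with $-\sigma<0$, hence vanishes by the injectivity part of Theorem \ref{deltag}. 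So $h_{ij}=\tilde h_{ij}$ and no re-choice of chart via Theorem \ref{Asymptoticcoord} is needed at the intermediate stages.

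The genuine gap is in your last paragraph, where you dismiss the cap at $n-2$ as harmless. For $\sigma'>n-2$, i.e.\ $-\sigma'<2-n$, Theorem \ref{deltag} gives only injectivity, not surjectivity, of $\Delta_g:W^{2,q}_{-\sigma'}\to L^q_{-\sigma'-2}$, so the key step ``solve $\Delta_g\tilde h_{ij}=\mathrm{RHS}$ with $\tilde h_{ij}\in W^{2,q}_{-\sigma'}$'' is simply unavailable, regardless of how fast $\mathrm{Ric}_g$ decays. And this is not a removable technicality: the Schwarzschild end has $\mathrm{Ric}_g\equiv 0$ but $g-\delta\sim m\,r^{2-n}$ in every structure at infinity (a faster rate would force the ADM mass to vanish), so the conclusion as stated cannot hold for $\eta>n-2$. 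The statement you are asked to prove is only correct with the decay rate replaced by $\min\{\eta,\,n-2\}$ (which is how the cited result is meant to be read, and is all the paper ever uses); your bootstrap proves exactly that, and you should stop it at the first weight $\sigma'\ge\min\{\eta,n-2\}$ rather than claim it continues past $n-2$.
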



Finally, the following statement concerning the conformal Laplacian will be important in our analysis. It is a variant of theorem 9.2 of \cite{Lee-Parker}

\begin{coro}
    \label{CLg}
    Let $(M,g)$ an asymptotically flat manifold of order $\tau>0$ and assume $R_g\geq 0$. If $2-n<\delta<0$ and $q>1$, then     $$L_g =\Delta_g -c_n R_g : W_\delta^{2,q} \rightarrow L^q_{\delta-2}$$
    is an isomorphism.
\end{coro}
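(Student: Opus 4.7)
The plan is to realise $L_g$ as a compact perturbation of $\Delta_g$, whose mapping properties are controlled by Theorem \ref{deltag}, and then to extract injectivity (and hence bijectivity, by Fredholm index stability) from the sign condition $R_g\geq 0$. Throughout, the hypotheses allow us to apply Theorem \ref{deltag} directly: the range $2-n<\delta<0$ contains only the integer weights $3-n,4-n,\ldots,-1$, all of which belong to $\{-1,\ldots,3-n\}$ and are therefore non-exceptional. Hence $\Delta_g:W^{2,q}_{\delta}\to L^q_{\delta-2}$ is an isomorphism.

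Next, I would show that the zero-order operator $M:u\mapsto -c_n R_g u$ is a compact map $W^{2,q}_{\delta}\to L^q_{\delta-2}$. Because $g$ is AE of order $\tau>0$, two derivatives of $g-\delta=O_4(r^{-\tau})$ give $R_g\in L^\infty_{-\tau-2}$, so multiplication by $R_g$ continuously sends $L^q_{\delta'}$ into $L^q_{\delta'-\tau-2}$ for every weight $\delta'$. Picking $\delta'\in(\delta,\delta+\tau)$ ensures $\delta'-\tau-2<\delta-2$, hence a continuous inclusion $L^q_{\delta'-\tau-2}\hookrightarrow L^q_{\delta-2}$; composing with the (weighted) compact Rellich embedding $W^{2,q}_{\delta}\hookrightarrow L^q_{\delta'}$ from the Bartnik framework produces the required compactness. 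Consequently $L_g=\Delta_g+M$ is a compact perturbation of an isomorphism, so it is Fredholm of index zero, and it will be an isomorphism as soon as we establish injectivity.

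For injectivity, suppose $u\in W^{2,q}_{\delta}$ solves $L_g u=0$, i.e. $\Delta_g u=c_n R_g u$. Elliptic regularity bootstrapping in the weighted spaces upgrades $u$ to a smooth function decaying at infinity (since $\delta<0$ and, after sufficient bootstrap, Sobolev embedding yields pointwise decay). Now apply the maximum principle in the convention $\Delta_g=g^{ij}\nabla_i\nabla_j$: at a positive interior maximum $p$ of $u$ one has $\Delta_g u(p)\leq 0$, but $c_n R_g(p)u(p)\geq 0$, forcing $u(p)\leq 0$ unless $u$ is constant. Since $u\to 0$ at infinity, this yields $u\leq 0$; applying the same argument to $-u$ gives $u\geq 0$, and thus $u\equiv 0$. (Alternatively, one can integrate $uL_gu=0$ over exhausting balls and use the decay to discard the boundary terms, obtaining $\int_M(|\nabla u|^2+c_n R_g u^2)\,dV_g=0$, and then conclude by $R_g\geq 0$ and decay.)

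The only mildly delicate point is the compactness of $M_{R_g}$: it relies on the compact embedding $W^{2,q}_{\delta}\hookrightarrow L^q_{\delta'}$ for $\delta<\delta'$, which is standard but weight-sensitive. Once this is in hand, index stability reduces everything to a sign-based injectivity argument, which is immediate under $R_g\geq 0$. All other ingredients are direct applications of results already cited in the appendix.
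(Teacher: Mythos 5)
Your proposal is correct, and the Fredholm/compact-perturbation setup coincides with the paper's; the difference lies in how injectivity is obtained. The paper first \emph{improves the decay} of a kernel element: from $\Delta_g u = c_n R_g u$ and the bijectivity of $\Delta_g$ on weights in $(2-n,0)$ it bootstraps $u\in W^{2,q}_{\delta'}$ for $\delta'$ arbitrarily close to $2-n$, and only then integrates $uL_gu=0$ by parts, using $2-n<\delta'<\tfrac{2-n}{2}$ to kill the boundary term $\int_{\partial B_R}u\,\partial_\nu u = O(R^{\,n+2\delta'-2})$. Your primary argument instead runs through the strong maximum principle for $\Delta_g - c_n R_g$ (non-positive zeroth-order coefficient), which needs only $u\to 0$ at infinity and so avoids the decay bootstrap entirely; this is a legitimate and arguably more elementary route, provided you state it as the Hopf strong maximum principle rather than the pointwise second-derivative test, since $\Delta_g u(p)\le 0$ together with $c_nR_g(p)u(p)\ge 0$ only yields $R_g(p)u(p)=0$, not a contradiction --- it is the ``non-constant solutions cannot attain a non-negative interior maximum'' statement that does the work (the paper itself invokes exactly this, via Lemma 4 of Maxwell, in Proposition \ref{Maxprinciple.1}). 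One caveat: your parenthetical alternative (``integrate over exhausting balls and discard the boundary terms'') is \emph{not} immediate with the a priori decay $u=O(r^{\delta})$, because for $\delta$ near $0$ and $n\ge 3$ the boundary term $O(R^{\,n+2\delta-2})$ does not vanish; that route genuinely requires the paper's preliminary weight-improvement step. Your treatment of the compactness of multiplication by $R_g$ is more detailed than the paper's (which simply asserts it) and is correct as stated.
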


\begin{proof} Since $L_g$ is a compact perturbation of the Laplacian, it is also Fredholm, hence thanks to its self-adjointness it suffices to proof injectivity to get the result.
Consider $ u\in W_\delta^{2,q}$ such that $\Delta_g u=c_n R_g u$, and first notice that by elliptic regularity we can assume that $u\in W^{2,q}_{\delta}$ with $q>n$. Also, notice that $R_g u \in W^{2,q}_{\delta'-2}$ for any $\delta'>\delta-\tau$. Since $\tau>0$, we can pick $\delta'$ satisfying $\max\{\delta-\tau,2-n\}<\delta'<\delta$ and appeal to Theorem \ref{deltag} to obtain $u\in W^{2,q}_{\delta'}$. Since we can repeat the argument as much as necessary, we can assume that $u\in W^{2,q}_{\delta'}$ with $\delta'> 2-n$ taken arbitrary close to $2-n$. Then, we can multiply $L_g(u)=0$ by $u$ and integrate by parts to obtain
$$\int_{M\setminus(\R^n \setminus B_R(0))} \left(\vert \nabla u \vert_g^2 + c_n R_g u^2\right) dv_g  = \int_{\partial B_R(0)}  u\partial_\nu u \; d\sigma,$$
Since $u=O_2(r^{\delta'})$ near infinity, we can estimate $\vert \nabla u \vert_g^2=O(r^{2\delta'-2})$ and $R_gu^2=O(r^{2\delta'-\tau-2})$ which implies these quantities are in $L^1(M)$ as long as chose $2-n<\delta'<\frac{2-n}{2}$. Furthermore, under these condition one finds $u\partial_\nu u=O(r^{2\delta'-1})$ and therefore 
\begin{align*}
\int_{M\setminus(\R^n \setminus B_R(0))} \left(\vert \nabla u \vert_g^2 + c_n R_g u^2\right) dv_g  = O(R^{n-2\delta'-2})
\end{align*}
so that we can pass to the limit as $R\rightarrow +\infty$, which gives that $u\equiv 0$ since $R_g\textcolor{blue}{\geq}0$ and $u\rightarrow 0$ as $|x|\rightarrow\infty$, which concludes the proof.
\end{proof}


\section{Appendix: Conventions on Q-curvature}

Let us adopt the following general definition of $Q$-curvature for an arbitrary Riemannian manifold $(M^n,g)$ with $n\geq 3$:
\begin{align}\label{Qcurv}
Q_g=-\Delta_g\sigma_1(S_g) + 4\sigma_2(S_g) + \frac{n-4}{2}\left(\sigma_1(S_g)\right)^2, 
\end{align} 
where $S_g\doteq \frac{1}{n-2}\left(\mathrm{Ric}_g - \frac{1}{2(n-1)}R_gg \right)$ stands for the Schouten tensor and $\sigma_k(S_g)$ stands for the $k$-th elementary symmetric function of the eigenvalues of $S_g$. In this context, the Paneitz operator is defined by
\begin{align}\label{Paneitz}
P_gu\doteq \Delta^2_gu + \mathrm{div}_{g}\left(\left( 4S_g - (n-2)\sigma_1(S_g)g \right)(\nabla u,\cdot)\right) + \frac{n-4}{2}Q_gu,
\end{align}
for all $u\in C^{\infty}(M)$. In this context, the following relations hold:
\begin{align}
\begin{split}
\sigma_1(S_g)&=\frac{R_g}{2(n-1)},\\
\sigma_2(S_g)&=\frac{1}{2}\left( \frac{n^2-n}{4(n-1)^2(n-2)^2}R^2_g - \frac{|\mathrm{Ric}_g|^2_{g}}{(n-2)^2} \right),
\end{split}
\end{align}
which implies that
\begin{align}
\begin{split}
Q_g&=-\frac{1}{2(n-1)}\Delta_gR_g - \frac{2}{(n-2)^2}|\mathrm{Ric}_g|^2_g + \frac{n^3-4n^2+16n-16}{8(n-1)^2(n-2)^2}R_g^2,\\
P_gu&=\Delta^2u + \mathrm{div}_g\left(\left(\frac{4}{n-2}\mathrm{Ric}_g - \frac{n^2-4n+8}{2(n-1)(n-2)}R_g\: g \right)(\nabla u,\cdot) \right) + \frac{n-4}{2}Q_gu.
\end{split}
\end{align}
In particular, for $n\neq 4$, if $\bar{g}=u^{\frac{4}{n-4}}g$, then
\begin{align}\label{Qcurvtranf}
Q_{\bar{g}}=\frac{2}{n-4}u^{-\frac{n+4}{n-4}}P_gu.
\end{align}
In the case of $n=4$ we can apply the above definitions to get
\begin{align}
\begin{split}
Q_g&=-\frac{1}{6}\Delta_gR_g - \frac{1}{2}|\mathrm{Ric}_g|^2_g + \frac{1}{6}R_g^2,\\
P_gu&=\Delta^2_gu + \mathrm{div}_g\left(\left( 2\mathrm{Ric}_g - \frac{2}{3}R_g\:g \right)(\nabla u,\cdot)\right),
\end{split}
\end{align}
and in this case, if $\bar{g}=e^{2u}g$, we have that \cite{Lin-Yuan}
\begin{align}\label{Qcurvtrans4d.1}
Q_{\bar{g}}=e^{-4u}\left(P_gu+Q_g \right).
\end{align}

Now, let us notice that it is also quite standard to \textit{redefine} the $Q$-curvature in dimension 4 via (see \cite{Malchiodi1,Li})
\begin{align}
Q^{(4)}_g=\frac{1}{2}Q_g=-\frac{1}{12}\Delta_gR_g - \frac{1}{4}|\mathrm{Ric}_g|^2_g + \frac{1}{12}R_g^2
\end{align}
and in this case, it follows that
\begin{align}
2Q^{(4)}_{\bar{g}}=e^{-4u}\left(P_gu+2Q^{(4)}_g \right).
\end{align}
We will not adopt this redefinitions and keep a unified notation via (\ref{Qcurv}) along this paper.

\section{Appendix: Conformal normal coordinates}

In order to deliver a presentation as self-cointained as possible, this section is meant to summarise some of the results concerning conformal normal coordinates presented in \cite{Lee-Parker}, which which are used in the core of this paper. The basic construction is given on a smooth Riemannian manifold $(M^n,g)$ where we intend to expand $g$ around a fixed point $p\in M$. In particular, we are interested in finding an element $\tilde{g}$ within the conformal class $[g]$ for which, in $\tilde{g}$-normal coordinates $\{x^i\}$, the following expansion holds around $p$:
\begin{align}\label{confcoord.0}
\det(\tilde{g})=1+O(r^{N})
\end{align}
for any chosen $N\geq 2$, where $r=|x|$ (see Theorem 5.1 in \cite{Lee-Parker}). This type of expansion is achieved by first noticing that, for any Riemannian metric $g$, in $g$-normal coordinates $\{x^i \}_{i=1}^n$ around $p\in M$, the following holds
\begin{align*}
g_{ij}(x)=\delta_{ij}+\frac{1}{3}R_{iklj}x^kx^l + \frac{1}{6}R_{iklj,a}x^kx^lx^a+\left(\frac{1}{20}R_{iklj,ab} + \frac{2}{45}R_{iklc}R_{jabc} \right)x^kx^lx^ax^b + O(r^{5}),
\end{align*}
where all the coefficients at evaluated at $p$. From this expression it is possible to compute $\det(g)$ around $p$ for any such metric, so as to get
\begin{align}\label{det-exp}
\begin{split}
\det(g)(x)&=1-\frac{1}{3}R_{ij}x^ix^j - \frac{1}{6}R_{ij,k}x^ix^jx^k \\
&- \left(\frac{1}{20}R_{ij,kl} + \frac{1}{90}R_{aijb}R_{aklb} -\frac{1}{18}R_{ij}R_{kl} \right)x^ix^jx^kx^l + O(r^5),
\end{split}
\end{align}
where again all the coefficients are evaluated at $p$.  Assuming an expanssion of the form $\det(g)=1+O(r^N)$ with $N\geq 2$ (the case $N=2$ is valid for any metric in its own normal coordinates) and appelaing to Theorem 5.2 in \cite{Lee-Parker}, the authors can find a homogeneous polynomial $f\in\mathcal{P}_N$ so that the expansion of $\det(\tilde{g})=1+O(r^{N+1})$ for $\tilde{g}=e^{2f}g$, which establishes an inductive proof (see the proof of Theorem 5.1). But notice that this implies that the symmetrization of the coefficients in (\ref{det-exp}) of order up to $N$ must vanish for $\tilde{g}$. Thus, in the case we do this construction for $N\geq 4$, we see that this implies
\begin{align}\label{confcoord.1}
\begin{split}
&\tilde{R}_{ij}(p)=0,\\
&\tilde{R}_{ij,k} + \tilde{R}_{ki,j} + \tilde{R}_{jk,i}(p)=0.
\end{split}
\end{align}
Putting together the second condition above with the contracted Bianchi identities, we also see that
\begin{align}\label{confcoord.2}
\tilde{R}_{,k}(p)=0.
\end{align}



\bibliographystyle{plain}
\bibliography{biblio}

\end{document}